\makeatletter\@addtoreset{equation}{section} \makeatother
\newtheorem{thm}{Theorem}[section]
\newtheorem{Lemma}{Lemma}[section]
\newtheorem{rem}{Remark}[section]
\makeatletter \setlength{\parindent}{2em}
\title{Global Nonlinear Stability of Geodesic Solutions of Evolutionary Faddeev Model}
\author{ Jianli Liu\thanks{ Department of Mathematics, Shanghai University, Shanghai 200444, PR China.{ E-mail address: jlliu@shu.edu.cn}.     }~~~Dongbing Zha\thanks{Corresponding author. Department of Mathematics and Institute for Nonlinear Sciences, Donghua University, Shanghai 201620, PR China.{ E-mail address: ZhaDongbing@163.com}.     }~~~Yi Zhou\thanks{ School of Mathematical Sciences, Fudan University, Shanghai 200433, PR China.{ E-mail address: yizhou@fudan.edu.cn}.     }}
\begin{document}

\maketitle
\begin{abstract}
In this paper, for evolutionary Faddeev model corresponding to maps from the Minkowski
space $\mathbb{R}^{1+n}$ to the unit sphere $\mathbb{S}^2$, we show the global nonlinear stability
of geodesic solutions, which are a kind of nontrivial and large solutions.
  \\
\emph{keywords}: Faddeev model; Quasilinear wave equations; Global nonlinear stability.\\
\emph{2010 MSC}: 35L05; 35L72.
\end{abstract}

\pagestyle{plain} \pagenumbering{arabic}

\section{ Introduction and main result  }
In quantum field theory, Faddeev model is an important model that describes heavy elementary particles by knotted topological
solitons. It was introduced by Faddeev in \cite{MR1553682,Fadder} and is a generalization of the well-known classical nonlinear $\sigma$ model of Gell-Mann
and L\'{e}vy \cite{MR0140316}, and is also related closely to the celebrated Skyrme model \cite{MR0138394}. \par
Denote an arbitrary point in Minkowski space $\mathbb{R}^{1+n}$ by
$(t,x)=(x^{\alpha}; 0\leq\alpha\leq n)$ and the space-time derivatives of a function by
$D=(\partial_{t},\nabla)=(\partial_{\alpha}; 0\leq\alpha\leq n).$
We raise and lower indices with the Minkowski metric $\eta=(\eta_{\alpha\beta})=\eta^{-1}=(\eta^{\alpha\beta})=$diag$(1,-1,\cdots,-1)$.
For the Faddeev model, the Lagrangian is given by
%\footnote{Here and throughout, we use the summation convention. Latin indices indicate that the implicit summation
%runs from $1$ to $n$, while Greek indices are used when the summation shall run from $0$ to $n$.}
\begin{align}\label{labguu}
\mathscr{L}({\bf{n}})=\int_{\mathbb{R}^{1+n}}~\frac{1}{2}\partial_{\mu}{\bf{n}}\cdot\partial^{\mu}{\bf{n}}-\frac{1}{4}
\big(\partial_{\mu}{\bf{n}}\wedge \partial_{\nu}{\bf{n}}\big)\cdot\big(\partial^{\mu}{\bf{n}}\wedge \partial^{\nu}{\bf{n}}\big)~dxdt,
\end{align}
where
$v_1\wedge v_2$ denotes the cross product of the vectors $v_1$  and $v_2$ in
$\mathbb{R}^{3}$ and ${\bf{n}}: \mathbb{R}^{1+n} \longrightarrow \mathbb{S}^2$ is a map from the Minkowski space to the unit sphere in $\mathbb{R}^{3}$. The associated Euler-Lagrange equations take the
form
\begin{align}\label{elFadd}
{\bf{n}}\wedge \partial_{\mu}\partial^{\mu}{\bf{n}}+\big(\partial_{\mu}\big[{\bf{n}}\cdot\big(\partial^{\mu}{\bf{n}}\wedge \partial^{\nu}{\bf{n}}\big)\big]\big)\partial_{\nu}{\bf{n}}=0.
\end{align}
See Faddeev \cite{MR1553682,Fadder,MR1989187}
and Lin and Yang \cite{MR2376667} and references therein. \par
The Faddeev model \eqref{elFadd} was introduced to model elementary particles by using
continuously extended, topologically characterized, relativistically invariant, locally
concentrated, soliton-like fields. The model is not only important in the area of
quantum field theory but also provides many interesting and challenging mathematical
problems, see for examples \cite{Cho,MR852091,MR900505,MR2036370,MR1641192,MR1168556,MR1677728}. There have been
a lot of interesting results in recent years in studying mathematical issues of static Faddeev
model. See Lin and Yang \cite{MR2080954, MR2070206, MR2241558, MR2376667, MR2274465} and Faddeev \cite{MR1989187}. However, the original model \eqref{elFadd} is an evolutionary system, which turns out to be unusual
nonlinear wave equations enjoying the null structure and containing semilinear terms, quasilinear terms and unknowns themselves.  Lei, Lin and Zhou \cite{MR2754038}
is the first rigorous mathematical result on the evolutionary Faddeev model. For the evolutionary Faddeev model in $\mathbb{R}^{1+2}$, they gave the global well-posedness of Cauchy problem for smooth, compact supported initial data with small $H^{11}(\mathbb{R}^2)\times H^{10}(\mathbb{R}^2)$ norm. Under the assumption that the system has equivariant form,  Geba, Nakanishi and Zhang \cite{MR3456696} got the sharp global regularity for the (1+2) dimensional Faddeev model with small critical Besov norm. Large data global well-posedness for the (1+2) dimensional equivariant Faddeev model can be found in Creek \cite{MR3251087} and Geba and Grillakis \cite{MR3909983}. We also refer the readers to Geba and Grillakis's recent monograph \cite{MR3585834} and references therein.
\par
As mentioned above, the equation \eqref{elFadd} for the evolution Faddeev model falls into the form of quasilinear wave equations.
For Cauchy problem of quasilinear wave equations, there are many classical results on global well-posedness of small perturbation of constant trivial solutions.
The global well-posedness for 3-D quasilinear wave equations with null structures and small data can be found in pioneering works Christodoulou \cite{MR820070} and Klainerman \cite{MR837683}. In the 2-D case, Alinhac \cite{MR1856402} first got the global existence of classical solutions with small data. As we known, there are few results on the global regularity of large solutions for quasilinear wave equations. But for some important physical models, the stability of some kind of special large solutions can be studied. For example, for timelike extremal surface equations, codimension one stability of the catenoid was studied in Donninger, Krieger, Szeftel
 and Wong \cite{MR3474816}. Liu and Zhou \cite{ZhouLiu} considered the stability of travelling wave solutions when $n=2$,  and Abbrescia and Wong \cite{Wong} treated the $n\geq 3$ case.
Some results on global nonlinear stability of large solutions for 3-D nonlinear wave equations with null conditions can be found in Alinhac \cite{MR2603759} and Yang \cite{MR3366921}.
\par
The main purpose of this paper is to investigate the global nonlinear stability of geodesic solutions of the evolutionary Faddeev model, which are a kind of nontrivial and large solutions.
The stability of such solutions was first considered by Sideris in the context of wave maps on $\mathbb{R}^{1+3}$ \cite{MR973742}.
Firstly, we rewrite the system \eqref{elFadd} in spherical coordinates. Let
\begin{align}\label{npolar}
{\bf{n}}=(\cos \theta\cos \phi, \cos \theta\sin \phi, \sin \theta)^{\mathbb{T}}
\end{align}
be a vector in the unit sphere. Here $\theta: \mathbb{R}^{1+n} \longrightarrow [-\pi,\pi]$ and $\phi: \mathbb{R}^{1+n} \longrightarrow [-\frac{\pi}{2},\frac{\pi}{2}]$
stand for the latitude and longitude, respectively. Substituting \eqref{npolar} into \eqref{labguu}, we have that the Lagrangian \eqref{labguu} equals to
\begin{align}\label{labguu2}
%\mathscr{L}({\bf{n}})&=
\mathscr{L}(\theta, \phi)
&=\int_{\mathbb{R}^{1+n}}~\frac{1}{2}Q(\theta,\theta)+\frac{1}{2}\cos^2\theta ~Q(\phi,\phi)-\frac{1}{4}\cos^2\theta~ Q_{\mu\nu}(\theta,\phi)Q^{\mu\nu}(\theta,\phi)~dxdt,
\end{align}
where the null forms
%\Begin{Align}
%Q(F,G)=\Partial_{\Mu}F\Partial^{\Mu}G
%\End{Align}
%And
%\Begin{Align}
% Q_{\Mu\Nu}(F,G)=\Partial_{\Mu}F\Partial_{\Nu}G-\Partial_{\Nu}F\Partial_{\Mu}G,~
% Q^{\Mu\Nu}(F,G)=\Partial^{\Mu}F\Partial^{\Nu}G-\Partial^{\Nu}F\Partial^{\Mu}G.
%\End{Align}
\begin{align}\label{nui899}
Q(f,g)=\partial_{\mu}f\partial^{\mu}g
\end{align}
and
\begin{align}\label{null2}
 Q_{\mu\nu}(f,g)=\partial_{\mu}f\partial_{\nu}g-\partial_{\nu}f\partial_{\mu}g.
\end{align}
By \eqref{labguu2} and Hamilton's principle, we can get the Euler-Lagrange equations with the following form
%\begin{align}\label{syst1}
%\Box \theta&=-\frac{1}{2}\sin(2\theta)Q(\phi,\phi)-\frac{1}{4}\sin(2\theta)Q_{\mu\nu}(\theta,\phi)Q^{\mu\nu}(\theta,\phi)-\frac{1}{2}\cos^2\theta Q_{\mu\nu}
%\big(\phi,Q^{\mu\nu}(\theta,\phi)\big),\\\label{syst2}
%\Box \phi&=\sin^2\theta \Box \phi+\sin(2\theta)Q(\theta,\phi)+\frac{1}{2}\cos^2\theta Q_{\mu\nu}
%\big(\theta,Q^{\mu\nu}(\theta,\phi)\big),
%\end{align}
\begin{align}\label{syst1}
\begin{cases}
\Box \theta=F(\theta, D \theta, D\phi, D^2\theta, D^2\phi),\\
\Box \phi=G(\theta, D \theta, D\phi, D^2\theta, D^2\phi),
\end{cases}
\end{align}
where $\Box=\partial_t^2-\Delta$ is the wave operator on $\mathbb{R}^{1+n}$,
\begin{align}
&F(\theta, D \theta, D\phi, D^2\theta, D^2\phi)\nonumber\\
&=-\frac{1}{2}\sin(2\theta)Q(\phi,\phi)-\frac{1}{4}\sin(2\theta)Q_{\mu\nu}(\theta,\phi)Q^{\mu\nu}(\theta,\phi)\nonumber\\
&~~~-\frac{1}{2}\cos^2\theta Q_{\mu\nu}
\big(\phi,Q^{\mu\nu}(\theta,\phi)\big)
\end{align}
and
\begin{align}
&G(\theta, D \theta, D\phi, D^2\theta, D^2\phi)\nonumber\\
&=\sin^2\theta \Box \phi+\sin(2\theta)Q(\theta,\phi)+\frac{1}{2}\cos^2\theta Q_{\mu\nu}
\big(\theta,Q^{\mu\nu}(\theta,\phi)\big).
\end{align}
\par
We note that if $\Theta=\Theta(t,x)$ satisfies the linear wave equation
\begin{align}\label{xianxing}
\partial_t^2\Theta-\Delta \Theta=0,
\end{align}
then $(\theta,\phi)=(\Theta, 0)$ satisfies the system \eqref{syst1}. In this case, ${\bf{n}}=(\cos \Theta, 0, \sin \Theta)^{\mathbb{T}}$ lies in geodesics on $\mathbb{S}^2$ (i.e. big circles). Thus following the definition in Sideris \cite{MR973742}, we call such solution as geodesic solutions. \par
In this paper, we will investigate the global nonlinear stability of such geodesic solutions of Faddeev model, i.e., the solution $(\Theta, 0)$ of system \eqref{syst1} on $\mathbb{R}^{1+n}, n\geq 2$. Here we will only focus on the cases $n=2$ and $n=3$. As we known, the (1+3) dimensional Faddeev model is an important physical model in particle physics. While the (1+2) dimensional case is much more
complicated than the (1+3) dimensional case from the point of mathematical treating. The $n\geq 4$ case can be treated by a way which is the same with the $n=3$ case. We note that Lei, Lin and Zhou's small data global existence result \cite{MR2754038} can be viewed as some kind of stability result for the trivial geodesic solution $(\theta,\phi)=(0,0)$ of \eqref{syst1} on
$\mathbb{R}^{1+2}$.

\par
The remainder of this introduction will be devoted to the description of some notations, which will be used in the sequel, and statements
of global nonlinear stability theorems in $n=3$ and $n=2$ . In Section 2, some necessary tools used to prove global nonlinear stability theorems are introduced.
The proof of
global nonlinear stability theorems in $n=3$ and $n=2$ will be given in Section 3 and Section 4, respectively.\par
\subsection{Notations}
Firstly, we introduce some vector fields as in Klainerman \cite{MR784477}.
Denote the collection of spatial rotations
$
\Omega=(\Omega_{ij}; 1\leq i<j\leq n)$, where $\Omega_{ij}=x_i\partial_j-x_j\partial_i,
$
the scaling operator
$
S=t\partial_t+x_i\partial_i,
$
and the collection of Lorentz boost operators
$L=(L_i:1\leq i\leq n)$,
$
L_i=t\partial_i+x_i\partial_t,~i=1,\cdots,n.
$
Define the vector fields
$
\Gamma=(D,\Omega, S,L)=(\Gamma_1,\dots,\Gamma_N), N=2+2n+\frac{(n-1)n}{2}.
$
For any given multi-index $a=(a_1,\dots,a_{N}),$
we denote
$
\Gamma^{a}=\Gamma_1^{a_1}\cdots\Gamma_{N}^{a_{N}}.
$
It can be verified that (see \cite{MR1047332})
\begin{align}\label{gooddecay345}
|D u|\leq C\langle t-r\rangle^{-1}|\Gamma u|,
\end{align}
where $\langle \cdot\rangle=(1+|\cdot|^2)^{\frac{1}{2}}$.
We will also introduce the good derivatives (see \cite{MR1856402})
\begin{align}\label{googder}
T_{\mu}=\omega_{\mu}\partial_t+\partial_{\mu},
\end{align}
where $\omega_0=-1, \omega_i=x_i/r~(i=1,\cdots,n), r=|x|$. Denote $T=(T_0,T_1,\cdots,T_n)$. Compared with \eqref{gooddecay345}, we have the following decay estimate:
\begin{align}\label{gooddecay}
|T u|\leq C\langle t+r\rangle^{-1}|\Gamma u|.
\end{align}
\par
The energy associated to the linear wave operator is defined as
%\begin{align}
%E(u(t))=\frac{1}{2}\int_{\mathbb{R}^{2}} \big(|\partial_tu(t,x)|^{2}+ |\nabla u(t,x)|^{2}\big)\, dx,
%\end{align}
\begin{align}
E_1(u(t))=\frac{1}{2}\int_{\mathbb{R}^{n}} \big(|\partial_tu(t,x)|^{2}+ |\nabla u(t,x)|^{2}\big)\, dx,
\end{align}
and the corresponding $k$-th order energy is given by
\begin{align}\label{kord}
E_{k}(u(t))=\sum_{|a|\leq k-1} {E}_1(\Gamma^{a}u(t)).
\end{align}
\par
For getting the global stability of geodesic solutions when $n=2$, we will use some space-time weighted energy estimates and pointwise estimates.
 Let $\sigma=t-r$, $q(\sigma)=\arctan\sigma,
 q'(\sigma)=\frac{1}{1+\sigma^2}=\langle t-r\rangle^{-2}$.  Since $q$ is bounded, there exists a constant $c>1$, such that
\begin{align}\label{noting}
c^{-1}\leq e^{-q(\sigma)}\leq c.
\end{align}
 Following Alinhac \cite{MR1856402}, we can introduce the \lq\lq ghost weight energy"
 \begin{align}
\mathcal {E}_1(u(t))=\frac{1}{2}\int_{\mathbb{R}^{n}}e^{-q(\sigma)}\left<t-r\right>^{-2}{|Tu|^2}\, dx
 \end{align}
 and its $k$-th order version
 \begin{align}
\mathcal {E}_k(u(t))= \sum_{|a|\leq k-1}\mathcal {E}_1(\Gamma^{a}u(t)).
 \end{align}
We will also introduce the following weighted~$L^{\infty}$ norm
\begin{align}
\mathcal {X}_0(u(t))=\big\|\langle t+|\cdot|\rangle^{\frac{n-1}{2}}\langle t-|\cdot|\rangle^{\frac{n-1}{2}}u(t,\cdot)\big\|_{L^{\infty}(\mathbb{R}^{n})},
\end{align}
and its $k$-th order version
\begin{align}\label{kord555}
\mathcal{X}_{k}(u(t))=\sum_{|a|\leq k} {\mathcal {X}}_0(\Gamma^{a}u(t)).
\end{align}
\par
For the convenience,
for any integer $k$ and $1\leq p\leq +\infty$, we will use the following notations
\begin{equation}
\|u(t,\cdot)\|_{W^{k,p}(\mathbb{R}^n)}=\sum_{|a|\leq
k}\|\nabla^{a}u(t,\cdot)\|_{L^{p}(\mathbb{R}^n)},
\end{equation}
\begin{equation}
\|u(t,\cdot)\|_{\dot{W}^{k,p}(\mathbb{R}^n)}=\sum_{|a|=
k}\|\nabla^{a}u(t,\cdot)\|_{L^{p}(\mathbb{R}^n)},
\end{equation}
\begin{equation}
|u(t,\cdot)|_{\Gamma,k}=\sum_{|a|\leq
k}|\Gamma^{a}u(t,\cdot)|
\end{equation}
and
\begin{equation}
\|u(t,\cdot)\|_{\Gamma,k,p}=\sum_{|a|\leq
k}\|\Gamma^{a}u(t,\cdot)\|_{L^{p}(\mathbb{R}^n)}.
\end{equation}
\subsection{Main results}
In this subsection, we will give the global stability results of geodesic solutions to Faddeev model in three and two dimensions. \par
Let $\Theta=\Theta(t,x)$ satisfy
\begin{align}\label{xianxingeeeee}
\begin{cases}
\partial_t^2\Theta-\Delta \Theta=0,~(t,x)\in \mathbb{R}^{1+n}, \\
t=0: \Theta=\Theta_0(x) , \partial_t\Theta=\Theta_1(x),
\end{cases}
\end{align}
where the initial data $\Theta_0$ and $\Theta_1$ are smooth and satisfy
\begin{align}\label{hju7899}
\Theta_0(x)=\Theta_1(x)=0,~~|x|\geq 1.
\end{align}
\par

%Assume that $\Theta$ satisfies \eqref{xianxing},
%\begin{align}\label{youxianchuan}
%\Theta(t,x)=0,~~|x|\geq t+1,
%\end{align}
%there is a constant~$c_0>0$ such that
%\begin{align}\label{shuaijin}
%\mathcal {X}_8(\Theta(t))\leq c_0,~\forall ~t> 0,
%\end{align}
%and there is a constant $0<\lambda_0<1$ such that
%\begin{align}\label{erttyy}
%|\Theta(t,x)|\leq \frac{\lambda_0}{2}\pi,~\forall~(t,x)\in \mathbb{R}^{1+n}
%\end{align}
%and
%\begin{align}\label{fgty67}
%|\partial_t\Theta(t,x)|\leq \lambda_0,~\forall ~(t,x)\in \mathbb{R}^{1+n}.
%\end{align}
%\begin{rem}
%Since $\Theta$ is the solution of Cauchy problem for 2-D homogeneous linear wave equations, if we assume that the initial data of $\Theta$ is smooth and support in $|x|\leq 1$, \eqref{youxianchuan} is just the finite propagation speed property, and the decay property \eqref{shuaijin} can be verified by linear theory of 2-D wave equations (see...). We also note that \eqref{erttyy} implies
%\begin{align}\label{ti78990}
%\cos^2(u+\Theta)\geq \cos^2\big(\frac{\pi}{4}(1+\lambda_0)\big)
%\end{align}
%for any $|u|\leq \frac{\pi}{4}(1-\lambda_0)$,
%and \eqref{fgty67} is equivalent to
%\begin{align}\label{fgty67777}
%1-Q(\Theta,\Theta)-\partial_i\Theta\partial_j\Theta\xi_i\xi_j\geq 1-\lambda_0,~\forall ~(t,x)\in \mathbb{R}^{1+n},|\xi|=1.
%\end{align}
%\eqref{ti78990} and \eqref{fgty67777} can ensure the hyperbolicity of the equation on $v$ and can be verified by assuming some bounded condition on the initial data of $\Theta$.
%\end{rem}
\par
In the following, we will consider the stability of the geodesic solution $(\Theta, 0)$ of system \eqref{syst1}. Let
\begin{align}\label{shjui89}
(\theta,\phi)=(u+\Theta, v).
\end{align}
We can easily get the equation of $(u,v)$ as following
\begin{align}\label{syst3}
\begin{cases}
\Box u=F(u+\Theta, D(u+\Theta), Dv, D^2(u+\Theta), D^2v),\\
\Box v=G(u+\Theta, D(u+\Theta), Dv, D^2(u+\Theta), D^2v).
\end{cases}
\end{align}
%\begin{align}\label{syst3}
%\Box u&=-\frac{1}{2}\sin(2(u+\Theta))Q(v,v)-\frac{1}{4}\sin(2(u+\Theta))Q_{\mu\nu}(u+\Theta,v)Q^{\mu\nu}(u+\Theta,v)\nonumber\\
%&~~~-\frac{1}{2}\cos^2\theta Q_{\mu\nu}
%\big(v,Q^{\mu\nu}(u+\Theta,v)\big),\\\label{syst4}
%\Box v&=\sin^2(u+\Theta) \Box v+\sin(2(u+\Theta))Q(u+\Theta,v)+\frac{1}{2}\cos^2(u+\Theta) Q_{\mu\nu}
%\big(u+\Theta,Q^{\mu\nu}(u+\Theta,v)\big).
%\end{align}
It is obvious that the stability of the geodesics solution $(\Theta, 0)$ of system \eqref{syst1} is equivalent to the stability of zero solution of \eqref{syst3}. Thus we will consider the Cauchy problem of the perturbed system \eqref{syst3}
with initial data
\begin{align}\label{xuyaoop}
t=0: u= u_0(x), \partial_tu= u_1(x),~ v= v_0(x), \partial_tv= v_1(x).
\end{align}
 \par
For introducing the geodesic solution, we note that there are some linear terms in the equation of $v$ in system \eqref{syst3}. Thus in order to ensure the hyperbolicity, we should give some further assumptions on the initial data $(\Theta_0,\Theta_1)$ of system \eqref{xianxingeeeee}.
 When $n=3$, we further assume that
\begin{align}\label{2mmn}
\lambda_0&=\|\Theta_0\|_{\dot{W}^{3,1}(\mathbb{R}^3)}+\|\Theta_1\|_{\dot{W}^{2,1}(\mathbb{R}^3)}<4\pi^2,\\\label{x867uii}
\lambda_1&=\|\Theta_0\|_{\dot{W}^{4,1}(\mathbb{R}^3)}+\|\Theta_1\|_{\dot{W}^{3,1}(\mathbb{R}^3)}<8\pi,\\\label{hu788}
\lambda&=\|\Theta_0\|_{H^{8}(\mathbb{R}^3)}+\|\Theta_1\|_{H^{7}(\mathbb{R}^3)}<+\infty.
\end{align}

 %\eqref{2mmn}, \eqref{x867uii}, \eqref{3mmn} and \eqref{xui89755hjj} are necessary (see also Lemma \ref{3Dbound} and Lemma \ref{2Dbound}).
% \par
 Having set down the necessary notation and formulated Cauchy problem of perturbed system, we are now ready to record our first main result to be proved.
The first main result in this paper is the following
\begin{thm}\label{mainthm}
When $n=3$, assume that $\Theta_0$ and $\Theta_1$ satisfy \eqref{hju7899}, \eqref{2mmn}--\eqref{hu788}, $\Theta$ satisfies \eqref{xianxingeeeee} and $u_0, u_1, v_0$ and $v_1$ are smooth and supported in $|x|\leq 1$.
Then there exist positive constants $A$ and $\varepsilon_0$ such that for any $ 0<\varepsilon\leq\varepsilon_0,$ if
\begin{align}
\|u_0\|_{H^{7}(\mathbb{R}^3)}+\|u_1\|_{H^{6}(\mathbb{R}^3)}+\|v_0\|_{H^{7}(\mathbb{R}^3)}+\|v_1\|_{H^{6}(\mathbb{R}^3)}\leq \varepsilon,
\end{align}
then Cauchy problem \eqref{syst3}--\eqref{xuyaoop} admits a unique global classical solution $(u,v)$ satisfying
\begin{align}\label{labejk99}
\sup_{0\leq t\leq T}\big(E_{7}^{\frac{1}{2}}(u(t))+E_{7}^{\frac{1}{2}}(v(t))\big)\leq A\varepsilon
\end{align}
for any $T>0$.
\end{thm}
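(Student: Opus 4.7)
The plan is the classical Klainerman vector-field method together with a continuity (bootstrap) argument, adapted to the fact that the background $\Theta$ is large (only its pointwise size and derivatives are small thanks to the $\dot W^{k,1}$ conditions \eqref{2mmn}--\eqref{x867uii}) while the perturbation $(u,v)$ is small in energy. First I would rewrite the $v$-equation in manifestly hyperbolic form: since $G$ contains the term $\sin^2\theta\,\Box\phi$, moving it to the left gives
\begin{align*}
\cos^2(u+\Theta)\,\Box v &= \sin(2(u+\Theta))\,Q(u+\Theta,v) + \tfrac12 \cos^2(u+\Theta)\,Q_{\mu\nu}\bigl(u+\Theta,Q^{\mu\nu}(u+\Theta,v)\bigr).
\end{align*}
The hyperbolicity condition $\cos^2(u+\Theta)\neq 0$ will be maintained by controlling $|\Theta|$ and $|u|$ pointwise: the sharp $L^\infty$ decay for the 3D wave equation combined with \eqref{2mmn}--\eqref{x867uii} forces $|\Theta|,(1+t)|D\Theta|\lesssim \lambda_0/(4\pi^2)$, etc., which by the choice of constants keeps $\Theta$ (and therefore $u+\Theta$) strictly inside the chart where $\cos^2\theta$ is bounded below.

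Next, because $\Gamma=(D,\Omega,S,L)$ commutes (up to $\Box$-preserving relations) with the linear wave operator, each $\Gamma^a\Theta$ solves a linear wave equation with data obtained from $(\Theta_0,\Theta_1)$, supported in $|x|\le 1$. Using the standard fundamental-solution representation on $\mathbb{R}^{1+3}$ I would derive
\begin{align*}
|\Gamma^a\Theta(t,x)|+(1+t+r)|D\Gamma^a\Theta(t,x)| &\lesssim \frac{1}{1+t+r}\,\bigl(\|\Theta_0\|_{\dot W^{|a|+2,1}}+\|\Theta_1\|_{\dot W^{|a|+1,1}}\bigr),
\end{align*}
valid for $|a|\le 5$ by \eqref{hu788}, together with the usual good-derivative improvement $|T\Gamma^a\Theta|\lesssim (1+t+r)^{-2}\cdot\langle t-r\rangle\,\lambda$. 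These pointwise bounds, proven independently of the perturbation, are the engine that makes terms linear in $(u,v)$ with $\Theta$-coefficients integrable in time.

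The bootstrap assumption is $E_7^{1/2}(u(t))+E_7^{1/2}(v(t))\le A\varepsilon$ on $[0,T^*]$. Via the Klainerman–Sobolev inequality in $\mathbb{R}^{1+3}$,
\begin{align*}
|\Gamma^a u(t,x)|+|\Gamma^a v(t,x)| &\lesssim \frac{A\varepsilon}{(1+t+r)(1+|t-r|)^{1/2}}, \qquad |a|\le 5,
\end{align*}
and likewise for the corresponding $|D\Gamma^a u|$, $|T\Gamma^a u|$ (gaining an extra $\langle t-r\rangle^{-1}$ or $\langle t+r\rangle^{-1}$ factor respectively through \eqref{gooddecay345}--\eqref{gooddecay}). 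Applying $\Gamma^a$ for $|a|\le 6$ to \eqref{syst3} and performing the standard energy estimate for the quasilinear wave operator with small variable coefficients (Gronwall), I would bound the commutators and the semilinear/quasilinear nonlinearities schematically. The null structure of $F$ and $G$ (both $Q(f,g)$ and $Q_{\mu\nu}(f,g)$) means every quadratic term contains at least one good derivative $Tf$ or $Tg$; each such factor, whether falling on $u,v$ or on $\Theta$, yields the extra $\langle t+r\rangle^{-1}$ factor required to produce an integrable-in-$t$ right-hand side.

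The main obstacle is the treatment of the terms in $G$ that are \emph{linear} in $v$, specifically $\sin(2\Theta)Q(\Theta,v)$ and the cross pieces of $Q_{\mu\nu}(\Theta,Q^{\mu\nu}(\Theta,v))$ after expansion. These have no smallness from the unknowns and would a priori only produce logarithmic-growth energy estimates. The fix is to exploit both factors of $\Theta$/$D\Theta$: on each such term I would arrange so that at least one $D\Theta$ factor becomes a good derivative $T\Theta$ (using the identity $\partial_\mu=T_\mu-\omega_\mu\partial_t$ and the null form algebra, following Alinhac), producing the decay
\begin{align*}
|\text{linear-in-}v\text{ coefficient}| &\lesssim \frac{\lambda}{(1+t+r)^2},
\end{align*}
which is integrable in $t$ and closes Gronwall. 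Higher-order commutators $\Gamma^a$ applied to these terms distribute the vector fields among $\Theta$ and $v$; since we have up to $|a|\le 6$ derivatives of $\Theta$ controlled in $L^\infty$ by \eqref{hu788} and up to $|a|\le 7$ energy on $v$, the usual Leibniz/Hölder split (low-frequency factor in $L^\infty$, high-frequency factor in $L^2$) absorbs everything. Once all estimates give $E_7^{1/2}(u)+E_7^{1/2}(v)\le C\varepsilon+C(A\varepsilon)^2+C\lambda A\varepsilon\int_0^t(1+s)^{-2}ds$, choosing $A$ large (independent of $\varepsilon$) and $\varepsilon_0$ small improves the bootstrap to $\tfrac12 A\varepsilon$, yielding global existence and \eqref{labejk99}.
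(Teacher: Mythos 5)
Your proposal follows the general vector-field/bootstrap framework of the paper, but it takes a genuinely different route in the core estimate, and that route has a gap. The paper states explicitly (opening of Section~4) that the $n=3$ proof uses \emph{only} energy estimates and does \emph{not} employ the null structure of the system: the quadratic terms $\sin^2(u+\Theta)\Box v$ and $\cos^2(u+\Theta)Q_{\mu\nu}(u+\Theta,Q^{\mu\nu}(u+\Theta,\Gamma^a v))$ are absorbed into a modified energy $e_0-\widetilde e$, shown coercive via Remark~\ref{rem45888} (which is precisely what your division by $\cos^2(u+\Theta)$ accomplishes, in a less symmetric form), while the remaining semilinear terms are controlled by the trilinear $L^2$ bounds of Lemmas~\ref{Hardynew2}--\ref{Hardynew2888}, which give $\langle t\rangle^{-3/2}$ decay by Klainerman--Sobolev and Hardy alone. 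No good derivatives are invoked.

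The gap in your argument lies in the linear-in-$v$ semilinear term $\sin(2(u+\Theta))Q(u+\Theta,v)$, whose $\Theta$-leading part is $\sin(2\Theta)Q(\Theta,v)$. First, your claimed pointwise bound $|D\Gamma^a\Theta|\lesssim\langle t+r\rangle^{-2}$ does not hold: for the free $3$D wave with data supported in $|x|\le1$, the solution lives near the light cone, where $\langle t-r\rangle\sim1$, so \eqref{gooddecay345} only yields $|D\Gamma^a\Theta|\lesssim\langle t+r\rangle^{-1}$; the $\langle t+r\rangle^{-2}$ gain is available only for the good derivative $T\Gamma^a\Theta$ through \eqref{gooddecay}. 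Second, the null-form decomposition of Lemma~\ref{QLyyy} gives $|Q(\Theta,v)|\lesssim|T\Theta||Dv|+|D\Theta||Tv|$, and you cannot arrange for the good derivative to always fall on $\Theta$. For the piece $\sin(2\Theta)\,D\Theta\,Tv$, treating $\sin(2\Theta)$ as merely $O(1)$ gives a coefficient of size $O(\langle t+r\rangle^{-1})$, while $\|T\Gamma^a v\|_{L^2}$ carries no gain over $\|D\Gamma^a v\|_{L^2}$ in the plain energy. A Gronwall argument with a $\langle t\rangle^{-1}$ coefficient produces logarithmic growth and does not close; the paper needs a ghost-weight space-time $L^2$ bound on $Tv$ only in the $n=2$ case, and you do not invoke one here. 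The missing ingredient is the elementary observation, made rigorous in Lemma~\ref{composite}, that $|\sin(2(u+\Theta))|\lesssim|u+\Theta|$, so the \lq\lq coefficient'' inherits the $\langle t+r\rangle^{-1}$ decay of $\Theta$ and the trilinear estimate of Lemma~\ref{Hardynew2} delivers the integrable $\langle t\rangle^{-3/2}$ rate. Adding that (or a ghost weight) would repair your proposal.
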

%\begin{rem}
%As mentioned before, Theorem {\rm{\ref{mainthm}}} implies the global nonlinear stability of the geodesics solution $(\Theta, 0)$ of \eqref{syst1}.
%\end{rem}
%\begin{rem}
%From \eqref{labejk99},  we can see that the higher order energy will be always small, i.e., not grow with time, and the pointwise decay rate of the solution itself is $\langle t+r\rangle^{\frac{1}{2}}\langle t-r\rangle^{\frac{1}{2}}$, which coincides with the corresponding one for 2-D linear wave equations, and thus is optimal.
%\end{rem}
%and the pointwise decay estimate
When $n=2$, we will assume that
\begin{align}\label{3mmn}
\widetilde{\lambda}_0&=\|\Theta_0\|_{\dot{W}^{2,1}(\mathbb{R}^2)}+\|\Theta_1\|_{\dot{W}^{1,1}(\mathbb{R}^2)}<2\pi,\\\label{xui89755hjj}
\widetilde{\lambda}_1&=\|\Theta_0\|_{\dot{W}^{3,1}(\mathbb{R}^2)}+\|\Theta_1\|_{\dot{W}^{2,1}(\mathbb{R}^2)}<4,\\\label{67yu969}
\widetilde{\lambda}&=\|\Theta_0\|_{W^{10,1}(\mathbb{R}^2)}+\|\Theta_1\|_{W^{9,1}(\mathbb{R}^2)}<+\infty.
\end{align}
\par
The second main result in this paper is the following
\begin{thm}\label{mainthm2}
When $n=2$, assume that $\Theta_0$ and $\Theta_1$ satisfy \eqref{hju7899}, \eqref{3mmn}--\eqref{67yu969}, $\Theta$ satisfies \eqref{xianxingeeeee} and $u_0, u_1, v_0$ and $v_1$ are smooth and supported in $|x|\leq 1$.
Then there exist positive constants $A_1, A_2$ and $\varepsilon_0$ such that for any $ 0<\varepsilon\leq\varepsilon_0,$ if
\begin{align}
\|u_0\|_{H^{7}(\mathbb{R}^2)}+\|u_1\|_{H^{6}(\mathbb{R}^2)}+\|v_0\|_{H^{7}(\mathbb{R}^2)}+\|v_1\|_{H^{6}(\mathbb{R}^2)}\leq \varepsilon,
\end{align}
then Cauchy problem \eqref{syst3}--\eqref{xuyaoop} admits a unique global classical solution $(u,v)$ satisfying
\begin{align}\label{labejk99}
\sup_{0\leq t\leq T}\big(E_{7}^{\frac{1}{2}}(u(t))+E_{7}^{\frac{1}{2}}(v(t))\big)\leq A_1\varepsilon~~\text{and}
\sup_{0\leq t\leq T}\big(\mathcal {X}_{4}(u(t))+\mathcal {X}_{4}(v(t))\big)\leq A_2\varepsilon
\end{align}
for any $T>0$.
\end{thm}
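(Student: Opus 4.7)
The plan is to prove Theorem 1.2 by a standard continuity/bootstrap argument on the lifespan, combined with the Klainerman vector field method and Alinhac's ghost weight energy technique adapted to the quasilinear system \eqref{syst3}. Before running the bootstrap I would first bring the $v$-equation into a genuinely hyperbolic form: since $G$ contains the term $\sin^2(u+\Theta)\,\Box v$ linear in $\Box v$, I move it to the left and divide by $\cos^2(u+\Theta)$, obtaining $\Box v=\widetilde G$ with $\widetilde G$ a sum of honest null forms involving only first derivatives of $v$ paired with derivatives of $u+\Theta$. Strict hyperbolicity then amounts to a uniform lower bound on $\cos^2(u+\Theta)$, which follows from the standard $L^{\infty}$--$L^1$ decay estimate for the 2D linear wave equation applied to $\Theta$ (the numerical hypothesis $\widetilde\lambda_0<2\pi$ is exactly what forces $\|\Theta(t,\cdot)\|_{L^{\infty}}<\pi/2$ uniformly in $t$), combined with the smallness of $u$ guaranteed by the pointwise bootstrap assumption.

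The bootstrap assumes the doubled bounds $E_7^{1/2}(u(t))+E_7^{1/2}(v(t))\leq 2A_1\varepsilon$ and $\mathcal X_4(u(t))+\mathcal X_4(v(t))\leq 2A_2\varepsilon$ on $[0,T]$ and seeks to recover them with the constants $A_1,A_2$ for $\varepsilon$ small. For the energy part I would commute $\Gamma^a$ with $|a|\leq 6$ through the principal operator, then perform the ghost-weight multiplier identity with $e^{-q(\sigma)}\partial_t\Gamma^a w$ for $w\in\{u,v\}$; by \eqref{noting} this reproduces $E_7$ up to an absolute constant and additionally produces $\int_0^t\mathcal E_7(w(s))\,ds$ on the left. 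On the right I would decompose every null form via $\partial_\mu=\omega_\mu\partial_t-T_\mu$, so that each null expression contains at least one good derivative $Tw$; the good-derivative factor is paired against the ghost-weight term via Cauchy--Schwarz (using \eqref{gooddecay}), while the remaining factor is bounded pointwise by $\mathcal X_4$ at low commutator order and by \eqref{gooddecay345} at top order. This yields an energy inequality that closes after absorbing the small parameter $\varepsilon$. Interactions with the large background are formally identical because $\Theta$ enters only through bounded analytic functions of $u+\Theta$ and through additional null forms $Q(\Theta,\cdot)$, $Q_{\mu\nu}(\Theta,\cdot)$; the required pointwise decay of $\Gamma^a\Theta$ in place of the $\mathcal X_4$ bound is provided by linear wave estimates under \eqref{3mmn}--\eqref{67yu969}.

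For the pointwise bound I would apply the weighted $L^{\infty}$ decay estimates collected in Section 2 to the equation for $\Gamma^a w$ with $|a|\leq 4$, inserting the already-improved energy bound together with the bootstrap $\mathcal X_4$ assumption to dominate the source terms in the integrated weighted norms demanded by the 2D representation formula, thereby recovering $\mathcal X_4(u(t))+\mathcal X_4(v(t))\leq A_2\varepsilon$. The main obstacle, and the reason the 2D case is genuinely harder than Theorem 1.1, is the borderline $\langle t+r\rangle^{-1/2}$ decay of free 2D waves and of $\Gamma^a\Theta$: naive bounds on integrals like $\int_0^t \|\Theta(s)\|_{L^{\infty}}\|Dv(s)\|_{L^{\infty}}\,ds$ diverge logarithmically, so closure depends critically on every nonlinearity being a null form, allowing the ghost-weight term $\int_0^t\mathcal E_7(w(s))\,ds$ to absorb precisely the borderline loss in each interaction term involving the large background $\Theta$. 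Once both bootstrap bounds are closed, a standard $H^7$ local well-posedness and continuation argument, available as long as hyperbolicity is preserved, upgrades the a priori estimate to the claimed global existence.
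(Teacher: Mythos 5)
Your overall skeleton (bootstrap on $E_7+\mathcal X_4$, ghost-weight multiplier $e^{-q(\sigma)}\partial_t\Gamma^a w$, $L^1$--$L^\infty$ estimate of H\"ormander type for the $\mathcal X_4$ bound, crucial use of the null structure and $\langle t-r\rangle$ weights, background decay of $\Gamma^a\Theta$ from \eqref{3mmn}--\eqref{67yu969}) is the same as the paper's. However, your preliminary reduction is not what the paper does and, as you state it, contains an error; and you have missed why \eqref{xui89755hjj} is needed.

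First, moving $\sin^2(u+\Theta)\Box v$ to the left and dividing by $\cos^2(u+\Theta)$ does not give a semilinear right-hand side. After the division one is left with $\tfrac12 Q_{\mu\nu}\bigl(u+\Theta,Q^{\mu\nu}(u+\Theta,v)\bigr)$, which is an iterated null form and contains genuine second derivatives of $v$; expanding it produces terms like $Q(u+\Theta,u+\Theta)\,\Box v$ and $\partial_\mu(u+\Theta)\partial^\nu(u+\Theta)\,\partial_\nu\partial^\mu v$. So the equation is still quasilinear with variable second-order coefficients, and the claim that $\widetilde G$ ``involves only first derivatives of $v$'' is incorrect. The paper does not divide at all: it keeps the quasilinear terms on the right, rewrites the pairing with $e^{-q(\sigma)}\partial_t\Gamma^a v$ as $\partial_t\widetilde e+\nabla\cdot\widetilde q+\widetilde p$ using Leibniz's rule, and works with the \emph{modified} energy density $e_0-\widetilde e$.

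Second, coercivity of this modified energy does not follow from a lower bound on $\cos^2(u+\Theta)$ alone. After the computation of $\widetilde e_0$ one finds
\[
e_0-\widetilde e_0\ \geq\ \tfrac12 e^{-q(\sigma)}|D\Gamma^a u|^2+\tfrac12 e^{-q(\sigma)}\cos^2(u+\Theta)|\partial_t\Gamma^a v|^2+\tfrac12 e^{-q(\sigma)}\cos^2(u+\Theta)\bigl(1-|\partial_t\Theta|^2\bigr)|\nabla\Gamma^a v|^2,
\]
so positivity requires \emph{both} $|\Theta|<\pi/2$ (which uses $\widetilde\lambda_0<2\pi$ via Lemma \ref{2Dbound}) \emph{and} $|\partial_t\Theta|<1$, and the latter is exactly what hypothesis \eqref{xui89755hjj} ($\widetilde\lambda_1<4$) provides through Remark \ref{rem9999888}. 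Your proposal only invokes $\widetilde\lambda_0<2\pi$; without $|\partial_t\Theta|<1$ the modified energy can degenerate, and the bootstrap fails at the coercivity step \eqref{rtt56666666}. Everything else in your plan (decomposing null forms into $T$-derivatives, pairing the good derivative with the ghost term, using $\mathcal X_4$ at low commutator order and $\langle t-r\rangle^{-1}$ gain at top order, and closing the $\mathcal X_4$ estimate with \eqref{Linfty}) is faithful to the paper.
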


\section{Preliminaries}
\subsection{Commutation relations}
The following lemma concerning the commutation relation between general derivatives, the wave operator and the vector fields was first established
by Klainerman \cite{MR784477}.
\begin{Lemma}\label{LEM2134}
For any given multi-index
$a = (a_1, \dots , a_N)$, we have
\begin{align}\label{shiyi}
[D,\Gamma^{a}]u=\sum_{|b|\leq |a|-1}c_{ab}D\Gamma^{b}u,\\
[\Box,\Gamma^{a}]u=\sum_{|b|\leq |a|-1}C_{ab}\Gamma^{b}\Box u,
\end{align}
where $[\cdot,\cdot]$ stands for the Poisson's bracket, i.e.,~$[A,B]=AB-BA,$ and $c_{ab}$ and $C_{ab}$ are constants.
%\begin{align}
%\Box \Gamma^{k}u=\Gamma^{k}\Box u+\sum_{|a|\leq |k|-1}C_{ak}\Gamma^{a}\Box u.
%\end{align}
\end{Lemma}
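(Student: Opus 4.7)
The plan is to prove both commutation identities by a joint induction on the length $|a|$ of the multi-index, using the Leibniz rule for commutators $[A,BC]=[A,B]C+B[A,C]$ and $[AB,C]=A[B,C]+[A,C]B$. The content of the lemma reduces, at the base case, to checking the commutators of a single vector field with $\partial_\alpha$ and with $\Box$, so I would first dispose of the base cases by direct computation and then run the induction.

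For the base case $|a|=1$, I would compute $[\partial_\alpha,\Gamma_i]$ for each generator. The key observations are $[\partial_\alpha,\partial_\beta]=0$; $[\partial_\alpha,\Omega_{ij}]=\delta_{\alpha i}\partial_j-\delta_{\alpha j}\partial_i$; $[\partial_\alpha,S]=\partial_\alpha$; and $[\partial_\alpha,L_i]=\delta_{\alpha 0}\partial_i+\delta_{\alpha i}\partial_t$ (with the convention $\partial_0=\partial_t$). In every case the commutator is a constant linear combination of first-order partials, which is exactly the statement of the first identity with $|b|=0$. For the wave operator, one verifies directly that $[\Box,\partial_\alpha]=0$, $[\Box,\Omega_{ij}]=0$, $[\Box,L_i]=0$, and $[\Box,S]=2\Box$, using that $\Box$ is invariant under translations, spatial rotations, and Lorentz boosts, and is homogeneous of degree $-2$ under scaling.

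For the inductive step, I would write $\Gamma^a=\Gamma_i\,\Gamma^{a'}$ with $|a'|=|a|-1$ and apply the derivation identity
\begin{align*}
[D,\Gamma^a]=[D,\Gamma_i]\,\Gamma^{a'}+\Gamma_i[D,\Gamma^{a'}].
\end{align*}
The first term is, by the base case, a constant linear combination of $\partial_\beta\Gamma^{a'}u$ with $|a'|=|a|-1$, which has the required form. For the second term the inductive hypothesis gives $[D,\Gamma^{a'}]u=\sum_{|b|\leq|a'|-1}c_{a'b}\,D\Gamma^{b}u$, and then $\Gamma_i D\Gamma^b u=D\Gamma_i\Gamma^b u-[D,\Gamma_i]\Gamma^b u$ so one more application of the base case reduces this to the desired sum over multi-indices of length at most $|a|-1$. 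The analogous argument for $\Box$ uses $[\Box,\Gamma^a]=[\Box,\Gamma_i]\Gamma^{a'}+\Gamma_i[\Box,\Gamma^{a'}]$; the first term is either $0$ or $2\Box\Gamma^{a'}u$, and the second term is handled by the inductive hypothesis together with the fact that we may freely move $\Gamma_i$ past a single $\Gamma^{b}$ at the price of a lower-order commutator.

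The only genuinely delicate bookkeeping is keeping track, in the induction, that whenever we move $D$ or $\Box$ past a $\Gamma_i$ we drop exactly one in the total count of $\Gamma$-factors acting on $u$, so that the sums on the right-hand side always run over $|b|\leq|a|-1$; this is automatic from the base-case identities because every commutator there is first-order in $\partial$ with constant coefficients and introduces no new $\Gamma$-factors. Neither identity presents a substantive analytic obstacle; it is a purely algebraic verification in the universal enveloping algebra of the Poincar\'e algebra augmented by scaling.
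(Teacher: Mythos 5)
Your proof is correct, and the paper does not actually supply its own argument for this lemma --- it is stated with a citation to Klainerman \cite{MR784477} --- so your verification is precisely what the reference supplies: explicit base-case commutators $[\partial_\alpha,\Gamma_i]$ and $[\Box,\Gamma_i]$, followed by induction using $[A,BC]=[A,B]C+B[A,C]$. The base-case identities you list are all right, and the bookkeeping in the inductive step is sound. One thing you rely on but leave implicit in the phrase ``move $\Gamma_i$ past $\Gamma^b$ at the price of a lower-order commutator'' is that the vector fields $\Gamma$ are closed under commutation, i.e.\ each $[\Gamma_i,\Gamma_j]$ is a constant linear combination of the $\Gamma_k$'s (in fact of $D$ and $\Omega$); this is what allows $\Gamma_i\Gamma^b$ to be rewritten as a combination of monomials $\Gamma^c$ with $|c|\leq |b|+1$ in the paper's fixed ordering. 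It is worth recording this commutator table alongside the $[D,\Gamma_i]$ and $[\Box,\Gamma_i]$ computations, since the inductive step uses both.
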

The following relationship between the vector field $\Gamma$ and null forms can be found in Klainerman \cite{MR837683} .
\begin{Lemma}\label{lem2345}
For null forms $Q(u,v)$ and $Q_{\mu\nu}(u,v)$, we have
 \begin{align}
 \Gamma Q(u,v)=Q(\Gamma u,v)+Q(u,\Gamma v)+\widetilde{Q}(u,v),\\
 \Gamma Q_{\mu\nu}(u,v)=Q_{\mu\nu}(\Gamma u,v)+Q_{\mu\nu}(u,\Gamma v)+\widetilde{Q}_{\mu\nu}(u,v),
 \end{align}
 where $\widetilde{Q}(u,v)$ and $\widetilde{Q}_{\mu\nu}(u,v)$  are some linear combinations of null forms $Q(u,v)$ and $Q_{\mu\nu}(u,v)$.
\end{Lemma}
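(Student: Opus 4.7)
The plan is a direct algebraic computation, handled case by case over the list $\Gamma \in \{\partial_\alpha,\Omega_{ij}, S, L_i\}$. The starting point in every case is the Leibniz rule together with the identity $\Gamma(\partial_\mu u)=\partial_\mu(\Gamma u)+[\Gamma,\partial_\mu]u$, which gives
\begin{align*}
\Gamma Q(u,v) &= Q(\Gamma u,v)+Q(u,\Gamma v)+[\Gamma,\partial_\mu]u\cdot\partial^\mu v+\partial_\mu u\cdot[\Gamma,\partial^\mu]v,\\
\Gamma Q_{\mu\nu}(u,v) &= Q_{\mu\nu}(\Gamma u,v)+Q_{\mu\nu}(u,\Gamma v)+R_{\mu\nu}(u,v),
\end{align*}
where $R_{\mu\nu}$ is the analogous commutator remainder built from $[\Gamma,\partial_\mu]u\cdot\partial_\nu v-[\Gamma,\partial_\nu]u\cdot\partial_\mu v$ plus the symmetric contribution in $v$. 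The task reduces to showing that the remainder is itself a linear combination of $Q$'s and $Q_{\alpha\beta}$'s.

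The unifying observation is that for each $\Gamma$ in the list, $[\Gamma,\partial_\mu]$ is a \emph{constant-coefficient} linear combination of coordinate derivatives $\partial_\nu$. Indeed, one has $[\partial_\alpha,\partial_\mu]=0$; $[\Omega_{ij},\partial_k]=\delta_{jk}\partial_i-\delta_{ik}\partial_j$ with $[\Omega_{ij},\partial_t]=0$; $[S,\partial_\alpha]=-\partial_\alpha$; and $[L_i,\partial_t]=-\partial_i$, $[L_i,\partial_j]=-\delta_{ij}\partial_t$. These are the classical commutation relations for Klainerman's vector fields with coordinate derivatives, and they make the remainder into a quadratic form in first derivatives of $u,v$ with constant coefficients, hence expressible through the basic quadratic forms $\partial_\alpha u\,\partial_\beta v$.

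I would then dispatch the four cases. For $\Gamma=\partial_\alpha$ the remainder vanishes outright. For $\Gamma=S$ the relation $[S,\partial_\mu]=-\partial_\mu$ yields exactly $-2Q(u,v)$ in the first line and $-2Q_{\mu\nu}(u,v)$ in the second, both null forms. For $\Gamma=\Omega_{ij}$, substituting the commutator into the remainder for $Q(u,v)$ gives $-\partial_i u\,\partial_j v+\partial_j u\,\partial_i v$ from the $u$-side and its opposite from the $v$-side, and they cancel; for $Q_{\mu\nu}(u,v)$ the remaining terms combine into a constant linear combination of $Q_{\alpha\beta}(u,v)$'s. The case $\Gamma=L_k$ is analogous: in the $Q$ identity the two commutator contributions produce $\pm Q_{0k}(u,v)$ which cancel, while in the $Q_{\mu\nu}$ identity they collapse into a combination of $Q_{\alpha\beta}$'s. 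Conceptually, the cancellation for $\Omega_{ij}$ and $L_k$ reflects that these are Killing vector fields for $\eta$, whereas $S$ is only conformally Killing, which is why a nonzero (but still null-form) correction appears.

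The only mild obstacle is bookkeeping of raised versus lowered indices — since $\partial^\mu=\eta^{\mu\nu}\partial_\nu$, the sign patterns when $\mu=0$ versus $\mu=i$ must be tracked carefully so that the antisymmetrized remainders land precisely on the $Q_{\alpha\beta}$ template rather than on a generic $\partial_\alpha u\,\partial_\beta v$. Beyond that the proof is entirely algebraic, with no analytic input.
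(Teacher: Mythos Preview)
Your proposal is correct and is exactly the standard direct verification. The paper itself does not prove this lemma at all; it simply attributes the result to Klainerman \cite{MR837683} and states it without argument. Your case-by-case computation using the commutator identities $[\partial_\alpha,\partial_\mu]=0$, $[\Omega_{ij},\partial_k]=\delta_{jk}\partial_i-\delta_{ik}\partial_j$, $[S,\partial_\alpha]=-\partial_\alpha$, $[L_i,\partial_t]=-\partial_i$, $[L_i,\partial_j]=-\delta_{ij}\partial_t$ is precisely how the result is established in the references, so there is nothing to compare methodologically. One minor remark: your sketch leaves the $\Omega_{ij}$ and $L_k$ cases for $Q_{\mu\nu}$ at the level of ``combine into a constant linear combination of $Q_{\alpha\beta}$'s''; this is true, and a one-line check (e.g.\ $\Omega_{ik}$ on $Q_{0j}$ gives remainder $\delta_{kj}Q_{0i}-\delta_{ij}Q_{0k}$) would close the argument completely.
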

%\begin{proof}
%See .
%\end{proof}
\subsection{Null form estimates}
%Denote the bilinear form
%\begin{align}\label{nonlinear1}
%N(u,v)=g_{\alpha\beta\gamma}\partial_{\gamma}u\partial_{\alpha}\partial_{\beta}v,
%\end{align}
%where for any given null vector $y$, we have
%\begin{align}\label{first111}
%g_{\alpha\beta\gamma}y_{\alpha}y_{\beta}y_{\gamma}=0.
%\end{align}
The following lemma gives some good decay property concerning the wave operator.
\begin{Lemma}\label{uu679yui}
We have
\begin{align}\label{gjyuii}
(1+ t) |\Box u|\leq C\sum_{|b|\leq 1}|D\Gamma^{b}u|.
\end{align}
\end{Lemma}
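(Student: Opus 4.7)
The plan is to derive an explicit algebraic identity that expresses $t\,\Box u$ as a linear combination of first-order derivatives of $Su$ and $L_i u$ (plus a lower-order term), and then combine this with the trivial pointwise bound $|\Box u|\leq C|D^2 u|$ which is already subsumed in $\sum_{|b|\leq 1}|D\Gamma^{b}u|$ (taking $\Gamma^{b}=\partial_{\alpha}$).

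The key computation is to expand $\partial_t S u$ and $\partial_i L_i u$ directly from the definitions $S=t\partial_t+x_j\partial_j$ and $L_i=t\partial_i+x_i\partial_t$. A short calculation gives
\begin{align*}
\partial_t Su&=\partial_t u+t\partial_t^2 u+x_j\partial_t\partial_j u,\\
\sum_{i=1}^n\partial_i L_i u&=t\Delta u+n\partial_t u+x_j\partial_j\partial_t u.
\end{align*}
Subtracting the second line from the first yields the identity
\begin{align*}
t\,\Box u=\partial_t Su-\sum_{i=1}^{n}\partial_i L_i u+(n-1)\partial_t u,
\end{align*}
from which one reads off the pointwise bound
\begin{align*}
t\,|\Box u|\leq |D Su|+\sum_{i=1}^{n}|D L_i u|+(n-1)|D u|\leq C\sum_{|b|\leq 1}|D\Gamma^{b}u|.
\end{align*}

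Finally I would add the trivial estimate $|\Box u|\leq |\partial_t^2 u|+|\Delta u|\leq C|D^2u|$, and observe that $|D^2u|$ appears on the right-hand side of the target inequality as $|D\Gamma^{b}u|$ with $\Gamma^{b}\in D$, i.e.\ $|b|=1$ but $\Gamma^{b}$ a partial derivative. Adding the two bounds produces $(1+t)|\Box u|\leq C\sum_{|b|\leq 1}|D\Gamma^{b}u|$.

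There is really no major obstacle: the whole argument is a single algebraic manipulation. The only point that requires mild care is making sure the sum over $i$ in $\partial_i L_i u$ reproduces exactly the Laplacian together with the spurious $x_j\partial_j\partial_t u$ term that is cancelled by the corresponding term in $\partial_t Su$, and correctly tracking the dimension-dependent coefficient $(n-1)\partial_t u$; after that, the desired estimate follows immediately since both $Su$ and $L_i u$ belong to the family $\Gamma^{b}u$ with $|b|=1$.
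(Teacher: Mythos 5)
Your proof is correct and follows essentially the same route as the paper: the paper's identity is $t\Box u=S\partial_tu-L_i\partial_iu$ together with the commutation relation \eqref{shiyi} to swap the order, whereas you expand $\partial_tSu-\sum_i\partial_iL_iu$ directly and carry the explicit commutator remainder $(n-1)\partial_tu$, which is the same computation presented in the opposite operator order. The final step (adding the trivial bound $|\Box u|\leq C|D^2u|$ to obtain the factor $1+t$) also matches what the paper implicitly does.
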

\begin{proof}
First, we have the equality
\begin{align}\label{hj7899999}
t\Box u=(t\partial_t+x_i\partial_i)\partial_tu-(x_i\partial_t+t\partial_i)\partial_iu=S\partial_tu-L_i\partial_iu.
\end{align}
Then \eqref{gjyuii} follows from \eqref{hj7899999} and \eqref{shiyi}.
\end{proof}
\begin{Lemma}\label{QLyyy}
For null forms $Q(u,v)$ and $Q_{\mu\nu}(u,v)$, we have
\begin{align}\label{xkktyyyhu}
|Q(u,v)|+|Q_{\mu\nu}(u,v)|\leq C|Du||Tv|+C|Tu||Dv|.
\end{align}
\end{Lemma}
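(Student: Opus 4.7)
The plan is to decompose each spatial derivative into its tangential (good) part plus a radial (bad) part. From the definition $T_i = \omega_i \partial_t + \partial_i$ we read off $\partial_i = T_i - \omega_i \partial_t$ for $i = 1,\ldots,n$; the time component $T_0 = -\partial_t + \partial_t$ is identically zero under the paper's convention, so $|Tu|$ captures only the genuinely tangential information. Once every spatial derivative in $Q(u,v)$ and $Q_{\mu\nu}(u,v)$ has been rewritten in this form, the claim reduces to a purely algebraic observation: the identity $\sum_{i=1}^{n} \omega_i^2 = 1$ forces the bad-bad piece of $Q(u,v)$ to vanish, and antisymmetry forces the bad-bad piece of $Q_{\mu\nu}(u,v)$ to vanish.

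Concretely, for the first null form I would compute
\begin{align*}
Q(u,v) &= \partial_t u\, \partial_t v - \sum_{i=1}^{n}(T_i u - \omega_i \partial_t u)(T_i v - \omega_i \partial_t v)\\
&= -\sum_{i=1}^{n} T_i u\, T_i v + \sum_{i=1}^{n}\omega_i\bigl(T_i u\,\partial_t v + \partial_t u\, T_i v\bigr),
\end{align*}
where the leading $\partial_t u\,\partial_t v$ cancels against $\sum_i \omega_i^2 \partial_t u\,\partial_t v = \partial_t u\,\partial_t v$. Every surviving term carries at least one factor of $Tu$ or $Tv$, producing the bound $|Q(u,v)| \le C|Du||Tv| + C|Tu||Dv|$.

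For $Q_{\mu\nu}(u,v)$ I would split into two cases. When $\mu = 0$ and $\nu = i$ is spatial, the substitution $\partial_i = T_i - \omega_i \partial_t$ yields
\begin{align*}
Q_{0i}(u,v) = \partial_t u\, \partial_i v - \partial_i u\, \partial_t v = \partial_t u\, T_i v - T_i u\, \partial_t v,
\end{align*}
after the $\omega_i \partial_t u\,\partial_t v$ contributions cancel. When both $\mu = i$ and $\nu = j$ are spatial, the same substitution produces a bad-bad coefficient $\omega_i \omega_j - \omega_j \omega_i = 0$, and the remaining pieces each carry a $T$-derivative. Combining the two cases gives the stated estimate. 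I do not anticipate any real obstacle here; the only point of care is to verify the cancellation of the bad-bad term in each case, which is precisely what encodes the null structure of the two quadratic forms.
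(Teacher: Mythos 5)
Your proof is correct and follows essentially the same approach as the paper: both rewrite spatial derivatives in terms of the good derivatives $T_\mu$ and observe that the null structure makes the bad-bad ($\partial_t u\,\partial_t v$) contribution cancel. The paper packages this as the covariant pointwise identities $Q(u,v)=T_{\mu}u\,\partial^{\mu}v-\omega_{\mu}\partial_tu\,T^{\mu}v$ and $Q_{\mu\nu}(u,v)=T_{\mu}u\,\partial_{\nu}v-T_{\nu}u\,\partial_{\mu}v-\omega_{\mu}\partial_tu\,T_{\nu}v+\omega_{\nu}\partial_tu\,T_{\mu}v$, while you carry out the equivalent substitution $\partial_i = T_i - \omega_i\partial_t$ component by component; the difference is purely presentational.
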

\begin{proof}
By definitions of the null forms \eqref{nui899} and \eqref{null2},  and the good derivatives \eqref{googder}, we have pointwise equalities
\begin{align}\label{fr566999966}
Q(u,v)=T_{\mu}u\partial^{\mu}v-\omega_{\mu}\partial_tuT^{\mu}v
\end{align}
and
\begin{align}\label{fr56666}
Q_{\mu\nu}(u,v)=T_{\mu}u\partial_{\nu}v-T_{\nu}u\partial_{\mu}v-\omega_{\mu}\partial_tuT_{\nu}v+\omega_{\nu}\partial_tuT_{\mu}v.
\end{align}
\eqref{xkktyyyhu} is just a direct consequence of \eqref{fr566999966} and \eqref{fr56666}.
\end{proof}
\begin{Lemma}\label{QL}
For null forms $Q(u,v)$ and $Q_{\mu\nu}(u,v)$, we have
\begin{align}\label{xuyaotyyyhu}
&|\Gamma^aQ(u,v)|+|\Gamma^aQ_{\mu\nu}(u,v)|\leq C\sum_{|b|+|c|\leq |a|}\big(|D\Gamma^{b}u||T\Gamma^{c}v|+|T\Gamma^{b}u||D\Gamma^{c}v|\big)
\end{align}
and
\begin{align}\label{xuyaohu}
&|\Gamma^aQ(u,v)|+|\Gamma^aQ_{\mu\nu}(u,v)|
\leq C\langle t\rangle^{-1}\sum_{|b|+|c|\leq |a|}\big(|D\Gamma^{b}u||\Gamma^{c+1}v|+|\Gamma^{b+1}u||D\Gamma^{c}v|\big).
\end{align}
\begin{proof}
\eqref{xuyaotyyyhu} is a consequence of Lemma \ref{lem2345} and Lemma \ref{QLyyy}.
While \eqref{xuyaohu} follows from \eqref{xuyaotyyyhu} and \eqref{gooddecay}.
\end{proof}
\end{Lemma}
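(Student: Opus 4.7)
The plan is to prove the two displayed inequalities in turn, with the first doing the real work and the second being a direct consequence of the good-derivative decay bound \eqref{gooddecay}.

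For the first estimate \eqref{xuyaotyyyhu}, the natural approach is induction on $|a|$, with the base case $|a|=0$ handled directly by Lemma \ref{QLyyy}. For the inductive step, I would write $\Gamma^{a}Q(u,v)=\Gamma^{a'}\big(\Gamma_i Q(u,v)\big)$ with $|a'|=|a|-1$, and then apply Lemma \ref{lem2345} to convert $\Gamma_i Q(u,v)$ into a sum of the form $Q(\Gamma_i u,v)+Q(u,\Gamma_i v)+\widetilde{Q}(u,v)$, where $\widetilde{Q}$ is itself a linear combination of null forms. Each summand is then a null form in which the total number of $\Gamma$'s distributed between the two arguments has increased by at most one, so the inductive hypothesis applies and yields a bound of the desired shape. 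The same argument works verbatim for $Q_{\mu\nu}$ using the second identity of Lemma \ref{lem2345}. After the induction terminates we apply the pointwise null-form estimate \eqref{xkktyyyhu} of Lemma \ref{QLyyy} to every resulting null form $Q(\Gamma^{b}u,\Gamma^{c}v)$ and $Q_{\mu\nu}(\Gamma^{b}u,\Gamma^{c}v)$, which produces precisely the right-hand side of \eqref{xuyaotyyyhu}.

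For the second estimate \eqref{xuyaohu}, I would simply insert the good-derivative bound \eqref{gooddecay} into the right-hand side of \eqref{xuyaotyyyhu}: applying $|T\Gamma^{c}v|\le C\langle t+r\rangle^{-1}|\Gamma^{c+1}v|$ and $|T\Gamma^{b}u|\le C\langle t+r\rangle^{-1}|\Gamma^{b+1}u|$, and noting that for $t\ge 0$ one has $\langle t+r\rangle^{-1}\le\langle t\rangle^{-1}$, the $\langle t\rangle^{-1}$ factor comes out cleanly.

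I expect the only real bookkeeping issue to be the handling of the extra $\widetilde{Q}$ terms generated at each application of Lemma \ref{lem2345}: they are linear combinations of null forms of the original arguments $u$ and $v$, so one has to be careful that their contribution is absorbed into the sum $|b|+|c|\le|a|$ (specifically into the boundary case where neither vector field falls on $u$ nor on $v$). Because $\widetilde{Q}$ is again a null form, a second application of Lemma \ref{QLyyy} handles it without difficulty, and the overall structure of the bound is preserved. No further subtlety arises, so the whole lemma reduces to a mechanical induction combined with the two pointwise ingredients already established.
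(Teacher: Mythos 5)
Your proposal is correct and follows exactly the route the paper intends: iterate Lemma \ref{lem2345} to distribute the vector fields over the arguments of the null forms, apply the pointwise bound \eqref{xkktyyyhu} of Lemma \ref{QLyyy} to each resulting null form to obtain \eqref{xuyaotyyyhu}, and then feed in the good-derivative decay \eqref{gooddecay} together with $\langle t+r\rangle^{-1}\leq\langle t\rangle^{-1}$ to obtain \eqref{xuyaohu}. The only difference is that you have filled in the induction and the bookkeeping for the $\widetilde{Q}$ terms, which the paper leaves implicit.
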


\subsection{Sobolev and Hardy type inequalities}
For getting the decay of derivatives of solutions, we will introduce the following famous Klainerman-Sobolev inequality, which is first proved in Klainerman \cite{MR865359}.
\begin{Lemma}\label{dfft6889}
If $u=u(t,x)$ is a smooth function with sufficient decay at infinity, then we have
\begin{align}\label{Sobo}
\langle t+r\rangle^{\frac{n-1}{2}}\langle t-r\rangle^{\frac{1}{2}}|u(t,x)|\leq C\|u(t,\cdot)\|_{\Gamma,k,2},~k>\frac{n}{2}.
\end{align}
\end{Lemma}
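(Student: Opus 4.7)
The plan is to reduce the inequality to classical Sobolev embedding $H^k(B)\hookrightarrow L^\infty(B)$ (valid for $k>n/2$) on a unit ball $B\subset\mathbb{R}^n$, via a fixed-time rescaling whose Jacobian produces exactly the weight $\langle t+r\rangle^{n-1}\langle t-r\rangle$.

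At fixed $t$, I would split into regions depending on the location of $x_0$. When $1+t+|x_0|$ is bounded, both weights are $O(1)$ and the claim follows from classical Sobolev together with $D\subset\Gamma$. For $1+t$ large, I would treat the interior $r_0\leq(1+t)/2$ and the exterior $r_0\geq(1+t)/2$ separately. In the interior, $\langle t\pm r\rangle\sim 1+t$, so the target is $(1+t)^{n/2}|u(t,x_0)|\lesssim \|u(t,\cdot)\|_{\Gamma,k,2}$. I would rescale $B(x_0,(1+t)/4)$ to the unit ball via $x=x_0+(1+t)y$, apply classical Sobolev to $\tilde u(y)=u(t,x_0+(1+t)y)$, and use the identity $t\partial_i=L_i-x_i\partial_t$ plus a small bootstrap (exploiting $|x_i|\leq(1+t)/2$ in this region) to show $(1+t)|Du|\leq C|\Gamma u|$ pointwise; iterating via the commutation relation \eqref{shiyi} then bounds $(1+t)^{|\alpha|}|\partial^\alpha u|$ by $\sum_{|\beta|\leq|\alpha|}|\Gamma^\beta u|$. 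The Jacobian $(1+t)^n$ converts the resulting $L^2$ norm on $B(x_0,(1+t)/4)$ into a global $L^2$ norm bounded by $(1+t)^{-n/2}\|u(t,\cdot)\|_{\Gamma,k,2}$, which is what is required.

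In the exterior region the weight $\langle t+r\rangle^{(n-1)/2}\langle t-r\rangle^{1/2}$ is anisotropic: comparable to $r_0$ in $n-1$ angular directions and to $\langle t-r_0\rangle$ in the single radial direction. In polar coordinates $x=r\omega$, I would localize on the slab $\Sigma=\{|\omega-\omega_0|\leq c,\ |r-r_0|\leq c\langle t-r_0\rangle\}$ and rescale to the unit ball with factor $r_0$ angularly and $\langle t-r_0\rangle$ radially. Angular derivatives are handled via rotations, using $\partial_\omega\lesssim r^{-1}|\Omega|$. Radial derivatives are handled via the identities $(t\pm r)(\partial_t\pm\partial_r)=S\pm L_r$ with $L_r=\omega_i L_i$, which express $\langle t-r\rangle\partial_r$ as a combination of $S$, $L_r$, and $(t+r)\partial_t$; the last is controlled using $(t^2-r^2)\partial_t u=tSu-x_iL_iu$, which is well-conditioned in the exterior since $t^2-r^2\sim\langle t-r\rangle(t+r)$ there. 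Classical Sobolev on the rescaled unit ball, whose Jacobian is $r_0^{n-1}\langle t-r_0\rangle$, then furnishes exactly the weight squared.

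The main obstacle is the exterior bookkeeping. One must verify that on each slab $\Sigma$ the quantities $r$, $\langle t+r\rangle$, $\langle t-r\rangle$ are all comparable to their values at the center, so the weight is essentially constant across $\Sigma$; that every rescaled derivative of order at most $k$ is pointwise controlled by $|\Gamma^{\leq k}u|$ via the identities above, iterated through Lemma \ref{LEM2134}; and that the radial/temporal split works uniformly as $t-r\to 0$, where the asymmetry $\langle t-r\rangle^{1/2}$ versus $\langle t+r\rangle^{(n-1)/2}$ in the weight is exactly the budget that the one-radial-plus-$(n-1)$-angular good-direction geometry affords.
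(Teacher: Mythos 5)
The paper does not prove Lemma~\ref{dfft6889}: it simply cites Klainerman's 1987 paper \cite{MR865359} for the result, so there is no in-text argument to compare against. Your localize-and-rescale strategy is not the one in Klainerman's original note (which argues more directly through weighted Sobolev inequalities adapted to cones), but it is a well-known alternative route to the same inequality, in the spirit of H\"ormander; it is correct in outline and would serve as a self-contained proof if fleshed out. The interior case as you describe it is fine: on $B(x_0,(1+t)/4)$ with $r_0\leq (1+t)/2$ all weights are comparable to $1+t$, and the pointwise bound $(1+t)^{|\alpha|}|\partial^\alpha u|\lesssim |u|_{\Gamma,|\alpha|}$ follows either from your $S,L$ bootstrap or, more directly, by iterating the paper's estimate \eqref{gooddecay345} through the commutation relation \eqref{shiyi}, since $\langle t-r\rangle\sim 1+t$ there. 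The exterior case is where care is needed, and you have correctly identified the two things to verify: (i) that on the slab (radial width a small multiple of $\langle t-r_0\rangle$, angular width a small multiple of $r_0$) the quantities $r$, $\langle t+r\rangle$, $\langle t-r\rangle$ stay comparable to their center values, which holds because $\langle t-r_0\rangle\lesssim r_0$ in the exterior; and (ii) that up to order $k$ the mixed rescaled derivatives $\langle t-r_0\rangle^i r_0^j\,\partial_r^i\partial_\omega^j u$ are pointwise controlled by $|u|_{\Gamma,i+j}$, which follows from the identities $(t\pm r)(\partial_t\pm\partial_r)=S\pm\omega_iL_i$ and $r\partial_\omega\sim\Omega$, again iterated through \eqref{shiyi}. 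One small point worth making explicit: the exterior regime should be entered only when $r_0$ is bounded below (which is automatic once $1+t+|x_0|$ is large and $r_0>(1+t)/2$), since otherwise the slab degenerates; your initial split into a compact region and a large-$(t,r)$ region takes care of this, but it is worth flagging so the Sobolev constant on the rescaled unit domain is uniform.
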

%\begin{proof}
%See.
%\end{proof}
When $n=3$, we can also find the following decay estimates in Klainerman \cite{MR837683}.
\begin{Lemma}\label{rt6788}
If $u=u(t,x)$ is a smooth function with sufficient decay at infinity, then we have
\begin{align}\label{Sobo2}
r^{\frac{1}{2}}|u(t,x)|\leq C\sum_{|a|\leq 1}\|\nabla \Omega^{a}u\|_{L^2(\mathbb{R}^3)}
\end{align}
and
\begin{align}\label{Sobo23333}
r|u(t,x)|\leq C\sum_{|a|\leq 1}\|\nabla \Omega^{a}u\|_{L^2(\mathbb{R}^3)}+C\sum_{|a|\leq 2}\|\Omega^{a}u\|_{L^2(\mathbb{R}^3)}.
\end{align}
\end{Lemma}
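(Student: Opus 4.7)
My plan is to prove both inequalities by a combination of polar-coordinate radial reductions, Sobolev embedding on the 2-sphere, and Hardy's inequality on $\mathbb{R}^3$, with the two estimates differing in how much cancellation is available at the origin.

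\textbf{Step 1: radial reduction.} Fix $x = r\omega_0$ with $r>0$ and $\omega_0\in S^2$. Since $u$ has sufficient decay at infinity, for the first estimate I write
\[
r\,|u(r\omega_0)|^{2} \;=\; -\int_{r}^{\infty}\partial_{\rho}\bigl(\rho\,|u(\rho\omega_0)|^{2}\bigr)\,d\rho \;=\; -\int_{r}^{\infty}\bigl(|u|^{2}+2\rho\,u\,\partial_{\rho}u\bigr)\,d\rho.
\]
A Fubini rearrangement of the unweighted $|u|^2$ piece (write $|u(\rho\omega_0)|^2 = -\int_\rho^\infty 2u\,\partial_\sigma u\,d\sigma$ and swap integrals) lets me absorb the first term into the second, leaving an inequality of the shape
\[
r\,|u(r\omega_0)|^{2}\leq C\int_{r}^{\infty}\rho\,|u(\rho\omega_0)|\,|\partial_{\rho}u(\rho\omega_0)|\,d\rho.
\]
For the second estimate I apply the same trick to the auxiliary function $g(x)=r u(x)$, which now vanishes at the origin; this lets me integrate from $0$ to $r$ (instead of from $r$ to $\infty$), picking up an extra factor of $r$ at the cost of having to retain an undifferentiated contribution from $u$ itself.

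\textbf{Step 2: angular Sobolev on $S^2$.} To convert the directional values at $\omega_0$ into integrated quantities on $\mathbb{R}^3$, I invoke Sobolev embedding on the 2-sphere, noting that the tangential gradient on the sphere of radius $\rho$ satisfies $|\nabla_{S^{2}_{\rho}}w|=\rho^{-1}|\Omega w|$. Bounding the pointwise angular factors $|u(\rho\omega_0)|$ and $|\partial_\rho u(\rho\omega_0)|$ by their $H^{s}(S^2)$ norms and converting these into radial integrals weighted by $\rho^{2}\,d\rho\,d\omega=dx$ gives, after one more Cauchy--Schwarz in $\rho$,
\[
r\,|u(r\omega_0)|^{2}\leq C\,\Big(\textstyle\int_{|y|\geq r}\frac{|u|^{2}+|\Omega u|^{2}}{|y|^{2}}\,dy\Big)^{1/2}\Big(\textstyle\int_{|y|\geq r}\bigl(|\partial_{\rho}u|^{2}+|\Omega\partial_{\rho}u|^{2}\bigr)\,dy\Big)^{1/2}.
\]
The second factor is already controlled by $\|\nabla\Omega^{\leq 1}u\|_{L^{2}}$, and the first factor is handled by Hardy's inequality on $\mathbb{R}^3$, $\|w/|x|\|_{L^{2}}\leq C\|\nabla w\|_{L^{2}}$, applied to $w=u$ and $w=\Omega u$. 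This delivers the first inequality.

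\textbf{Step 3: the second inequality.} Running the same machinery on $g=ru$ produces $r^{2}|u(r\omega_0)|^{2}$ on the left but costs one extra derivative in two places: the factor $r$ turns one $\nabla_{S^{2}_{\rho}}$ into $\Omega$ without a prefactor, and the piece arising from differentiating $\rho$ (rather than $u$) leaves an undifferentiated $|u|^{2}$-term that Hardy is no longer strong enough to absorb (the extra weight $r$ breaks scaling). This residual term is precisely what forces the appearance of $\sum_{|a|\leq 2}\|\Omega^{a}u\|_{L^{2}}$ on the right-hand side. Combining everything yields the second estimate.

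\textbf{Main obstacle.} The tight point is the borderline failure of the embedding $H^{1}(S^{2})\hookrightarrow L^{\infty}(S^{2})$. A naive application of $S^{2}$-Sobolev would cost two rotation derivatives on each factor, breaking the count $|a|\leq 1$ in the first inequality. The key observation that makes the count close is that the radial weight $\rho$ already provides a ``half-derivative'' gain through the factor $\rho|u||\partial_\rho u|$: splitting it symmetrically via Cauchy--Schwarz so that each side carries only one $\Omega$ after Sobolev on $S^2$, and then invoking Hardy to trade the undifferentiated $|u|^{2}$ against $|\nabla u|^{2}$, is exactly the manoeuvre required to make the bookkeeping work with only $|a|\leq 1$.
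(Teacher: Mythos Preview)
The paper does not actually prove this lemma; it simply attributes both estimates to Klainerman \cite{MR837683} and moves on. So there is no in-paper argument to compare against, and the relevant question is whether your sketch stands on its own.

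Your Step~1 is fine, and the overall architecture (radial fundamental-theorem reduction, angular Sobolev on $S^{2}$, Hardy on $\mathbb{R}^{3}$) is indeed the standard one. The gap is in Step~2. The displayed inequality
\[
r\,|u(r\omega_{0})|^{2}\;\leq\;C\Big(\int_{|y|\geq r}\frac{|u|^{2}+|\Omega u|^{2}}{|y|^{2}}\,dy\Big)^{1/2}\Big(\int_{|y|\geq r}\big(|\partial_{\rho}u|^{2}+|\Omega\partial_{\rho}u|^{2}\big)\,dy\Big)^{1/2}
\]
contains only $H^{1}(S^{2})$-type angular norms on each factor, yet you obtain it by ``bounding the pointwise angular factors \dots\ by their $H^{s}(S^{2})$ norms.'' On the two-sphere the critical exponent is exactly $s=1$, so $H^{1}(S^{2})\not\hookrightarrow L^{\infty}(S^{2})$; you acknowledge this in the final paragraph, but the fix you offer (``split symmetrically via Cauchy--Schwarz so that each side carries only one $\Omega$'') does not actually close. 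Applying $H^{2}(S^{2})\hookrightarrow L^{\infty}(S^{2})$ to the product $u\,\partial_{\rho}u$ and distributing derivatives via Leibniz inevitably produces a term $\|\Omega^{2}u\cdot\partial_{\rho}u\|_{L^{2}(S^{2})}$, which carries two angular derivatives on a single factor; no amount of Cauchy--Schwarz in $\rho$ alone makes that disappear.

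What you are missing is the observation that the \emph{full} gradient $\nabla$ on $\mathbb{R}^{3}$ already hides one angular derivative, since $|\nabla w|^{2}=|\partial_{r}w|^{2}+r^{-2}|\Omega w|^{2}$. Consequently $\|\nabla\Omega u\|_{L^{2}(\mathbb{R}^{3})}$ controls $\|r^{-1}\Omega^{2}u\|_{L^{2}(\mathbb{R}^{3})}=\big(\int_{0}^{\infty}\!\|\Omega^{2}u\|_{L^{2}(S^{2})}^{2}\,d\rho\big)^{1/2}$, i.e.\ one gets second angular derivatives for free, but only with the weight $r^{-1}$. This is precisely the mechanism that lets the derivative count stop at $|a|\leq 1$ in \eqref{Sobo2}: the problematic $\Omega^{2}u$ contribution from Sobolev on $S^{2}$ enters with the right radial weight to be absorbed into $\|\nabla\Omega u\|_{L^{2}}$ after the $\rho$-integration. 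Without invoking this, your argument as written would only yield the weaker bound with $|a|\leq 2$ (which, incidentally, is all the paper ever uses downstream in Lemmas~\ref{daoshu}--\ref{benshen}). The same remark applies to your Step~3 for \eqref{Sobo23333}.
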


The following Hardy type inequality, which is used to produce a general derivative, was first proved in Lindblad \cite{MR1047332}.
\begin{Lemma}\label{rty7ffff78}
 If $u=u(t,x)$ is a smooth function supported in $|x|\leq t+1$, then we have the following Hardy type inequality:
\begin{align}\label{Hardy}
\|\langle t-r\rangle^{-1}u\|_{L^2(\mathbb{R}^n)}\leq C\|\nabla u\|_{L^2(\mathbb{R}^n)}.
\end{align}
\end{Lemma}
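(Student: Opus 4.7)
My strategy is to pass to polar coordinates $(r,\omega)\in[0,\infty)\times\mathbb{S}^{n-1}$ and split the spatial integral into three disjoint regimes, each handled by a distinct elementary tool. The support hypothesis $|x|\leq t+1$ is used in two ways: to supply the boundary condition $u(t,(t+1)\omega)=0$ needed to run a one-dimensional Hardy estimate in $r$, and to confine everything to a ball of bounded radius when $t$ is small.

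I would first dispose of the small-time case $t\leq t_0$, for some fixed absolute constant $t_0$. On the support, $r\leq t+1$ is bounded and $\langle t-r\rangle$ is bounded above and below by constants, so $\|\langle t-r\rangle^{-1}u\|_{L^2}\lesssim\|u\|_{L^2}$; since $u$ vanishes outside a ball of fixed radius, the classical Poincar\'e inequality closes the estimate. For the remaining case $t\geq t_0$, I would split the support into an outer shell $r\in[(t+1)/2,t+1]$ and an inner ball $r\leq (t+1)/2$, and handle them separately.

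In the outer shell, $r\sim t+1$, so the weight $r^{n-1}$ can be pulled out of the radial integral up to a multiplicative constant. An elementary algebraic comparison shows $\langle t-r\rangle^{-2}\leq 2(t+1-r)^{-2}$ on this shell, which reduces the estimate to the one-dimensional Hardy inequality
\begin{equation*}
\int_{a}^{t+1}\frac{f(r)^2}{(t+1-r)^2}\,dr\leq 4\int_{a}^{t+1}(f'(r))^2\,dr,
\end{equation*}
applied with $f(r)=u(t,r\omega)$; this inequality is valid precisely because the support condition forces $f(t+1)=0$. Restoring the $r^{n-1}$ factor via $r\sim t+1$ and integrating in $\omega$ gives the required bound. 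In the inner ball, I first observe $\langle t-r\rangle\geq c(1+t)$. When $n\geq 3$ this further yields $\langle t-r\rangle\geq c\,r$, so the classical Hardy inequality $\|r^{-1}u\|_{L^2(\mathbb{R}^n)}\leq C\|\nabla u\|_{L^2(\mathbb{R}^n)}$ immediately closes the estimate.

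The main obstacle is the inner ball when $n=2$, since the classical Hardy inequality fails in two dimensions. To circumvent this I would use the support condition to write $u(t,r\omega)=-\int_r^{t+1}\partial_s u(t,s\omega)\,ds$, apply Cauchy--Schwarz with weight $1/s$ to obtain the pointwise bound
\begin{equation*}
|u(t,r\omega)|^2\leq \log\!\Big(\tfrac{t+1}{r}\Big)\int_r^{t+1}s\,|\partial_s u(t,s\omega)|^2\,ds,
\end{equation*}
and then rely on the elementary estimate $\int_0^{(t+1)/2}r\log((t+1)/r)\,dr\lesssim (1+t)^2$. The factor $(1+t)^2$ exactly cancels the $(1+t)^{-2}$ gain coming from $\langle t-r\rangle^{-2}$ in this regime, and after integrating in $\omega$ the right-hand side is bounded by $\int_{\mathbb{R}^2}|\partial_r u|^2\,dx\leq\|\nabla u\|_{L^2}^2$. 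The delicate point is that both the support condition at $r=t+1$ and the favorable behavior of the $r$-weight near the origin must be used simultaneously to beat the absence of two-dimensional classical Hardy; this is the only step that is not completely routine.
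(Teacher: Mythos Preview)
The paper does not actually prove this lemma; it merely states it and attributes it to Lindblad \cite{MR1047332}. So there is no in-paper argument to compare against.

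Your plan is correct and constitutes a complete, self-contained proof. The small-time reduction via Poincar\'e is fine since $\langle t-r\rangle\geq 1$. In the outer shell your algebraic claim $\langle t-r\rangle^{-2}\leq 2(t+1-r)^{-2}$ is equivalent to $((t-r)-1)^2\geq 0$, and the one-dimensional Hardy inequality on $[a,t+1]$ with vanishing endpoint follows from the standard integration-by-parts proof (the boundary term at $r=t+1$ vanishes by the support hypothesis, and the one at $r=a$ has the favorable sign). Pulling $r^{n-1}\sim(t+1)^{n-1}$ in and out across the shell is harmless. In the inner ball for $n\geq 3$ the chain $\langle t-r\rangle\gtrsim 1+t\gtrsim r$ reduces matters to classical Hardy. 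For $n=2$ your logarithmic Cauchy--Schwarz argument is exactly the right substitute: after Fubini the inner $r$-integral $\int_0^{(t+1)/2}r\log((t+1)/r)\,dr$ scales like $(1+t)^2$, which cancels the $(1+t)^{-2}$ from the weight, leaving $\int_{\mathbb{R}^2}|\partial_r u|^2\,dx\leq\|\nabla u\|_{L^2}^2$. This last step, as you note, is the only place where both the vanishing at $r=t+1$ and the behavior of the area element near the origin are used simultaneously; it is standard but worth writing out carefully.
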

%\begin{proof}
%See .
%\end{proof}

\subsection{Estimates of solutions to linear wave equations}
The fundamental theorem of calculus implies the following
\begin{Lemma}\label{xuyao8900}
Let $f:\mathbb{R}^{+}\longrightarrow \mathbb{R}$ be a smooth function with sufficient decay at infinity. Then for any positive integer $m$, we have
\begin{align}
f(t)=\frac{(-1)^{m}}{(m-1)!}\int_{t}^{+\infty}(s-t)^{m-1}f^{(m)}(s)ds.
\end{align}
\end{Lemma}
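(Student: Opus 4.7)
The plan is to prove the identity by induction on $m$, using the fundamental theorem of calculus for the base case and a single integration by parts for the inductive step.

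For the base case $m=1$, since $f$ decays at infinity we have $f(s)\to 0$ as $s\to +\infty$, so the fundamental theorem of calculus gives
\begin{align*}
f(t) = -\int_t^{+\infty} f'(s)\,ds = \frac{(-1)^1}{0!}\int_t^{+\infty}(s-t)^0 f^{(1)}(s)\,ds,
\end{align*}
which is the claimed formula.

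For the inductive step, suppose the formula holds for some $m\geq 1$. Integrate by parts in the integral, taking the antiderivative on the polynomial factor: set $u=f^{(m)}(s)$ and $dv=(s-t)^{m-1}\,ds$, so $du=f^{(m+1)}(s)\,ds$ and $v=(s-t)^m/m$. The boundary term at $s=t$ vanishes because of the factor $(s-t)^m$, and the boundary term at $s=+\infty$ vanishes because $f$ and all its derivatives have sufficient decay at infinity. Thus
\begin{align*}
\int_t^{+\infty}(s-t)^{m-1}f^{(m)}(s)\,ds = -\frac{1}{m}\int_t^{+\infty}(s-t)^{m}f^{(m+1)}(s)\,ds,
\end{align*}
and combining with the inductive hypothesis gives
\begin{align*}
f(t) = \frac{(-1)^{m+1}}{m!}\int_t^{+\infty}(s-t)^{m}f^{(m+1)}(s)\,ds,
\end{align*}
which is the formula for $m+1$.

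There is essentially no obstacle here; the only point worth flagging is that the phrase \emph{sufficient decay at infinity} must be interpreted as ensuring that $(s-t)^{m-1}f^{(m)}(s)\to 0$ as $s\to +\infty$ for every relevant $m$, so that all boundary terms in the integrations by parts drop out and all improper integrals converge. Under that mild interpretation the induction closes immediately.
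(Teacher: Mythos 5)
Your proof is correct, and it fills in exactly the detail the paper leaves implicit: the paper simply prefaces the lemma with ``The fundamental theorem of calculus implies the following'' and gives no written argument, so your induction on $m$ (FTC for the base case, a single integration by parts to advance from $m$ to $m+1$, with boundary terms vanishing at $s=t$ because of the factor $(s-t)^m$ and at $s=+\infty$ by decay) is precisely the intended reasoning.

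Your closing remark is also the right one to flag: in the applications in the paper, $f^{(m)}(s)$ is a derivative of compactly supported initial data evaluated along a ray, so it is identically zero for large $s$ and all the convergence issues are trivially satisfied; under the stated hypothesis of ``sufficient decay'' one does indeed need $(s-t)^{m-1}f^{(m)}(s)\to 0$ for each relevant $m$, exactly as you say.
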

For getting the stability of geodesic solutions of Faddeev model, we will give some exact boundedness estimates for solutions to homogeneous linear wave equations in two and three dimensions.
\begin{Lemma}\label{3Dbound}
Let $u$ is the solution of the following three dimensional linear wave equation
\begin{align}
\begin{cases}
\Box u(t,x)=0,~(t,x)\in \mathbb{R}^{1+3},\\
t=0: u=u_0(x),\partial_tu=u_1(x),~x\in \mathbb{R}^3,
\end{cases}
\end{align}
where $u_0$ and $u_1$ are smooth functions with compact supports in $|x|\leq 1$.
Then we have
\begin{align}\label{xuyao890}
\|u(t,\cdot)\|_{L^{\infty}(\mathbb{R}^3)}\leq \frac{1}{8\pi}\big(\|u_0\|_{\dot{W}^{3,1}(\mathbb{R}^3)}+\|u_1\|_{\dot{W}^{2,1}(\mathbb{R}^3)}\big)
\end{align}
and
\begin{align}\label{xuyaodee890}
\|\partial_tu(t,\cdot)\|_{L^{\infty}(\mathbb{R}^3)}\leq \frac{1}{8\pi}\big(\|u_0\|_{\dot{W}^{4,1}(\mathbb{R}^3)}+\|u_1\|_{\dot{W}^{3,1}(\mathbb{R}^3)}\big).
\end{align}
\end{Lemma}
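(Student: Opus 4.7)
My starting point is Poisson's explicit representation of the solution in three space dimensions,
\begin{equation*}
u(t,x)=\frac{1}{4\pi}\int_{S^{2}}u_{0}(x+t\omega)\,d\omega+\frac{t}{4\pi}\int_{S^{2}}(\omega\cdot\nabla)u_{0}(x+t\omega)\,d\omega+\frac{t}{4\pi}\int_{S^{2}}u_{1}(x+t\omega)\,d\omega,
\end{equation*}
which writes $u(t,x)$ as a sphere-average of $u_0$, $\nabla u_0$, and $u_1$ on $|y-x|=t$. To trade this sphere average for a volume integral against derivatives of $u_0,u_1$ (which is the only way to bring in $\dot{W}^{3,1}$ and $\dot{W}^{2,1}$ norms), I will apply Lemma \ref{xuyao8900}, which is legal because the compact support of $u_0,u_1$ in $|x|\le 1$ makes the radial functions $s\mapsto u_j(x+s\omega)$ (and all their derivatives) vanish for all sufficiently large $s$.

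For the $u_1$ contribution, I apply Lemma \ref{xuyao8900} with $m=2$ to $f(s)=u_1(x+s\omega)$ and change variables $y=x+s\omega$ (using $ds\,d\omega=|y-x|^{-2}\,dy$) to obtain
\begin{equation*}
\frac{t}{4\pi}\int_{S^{2}}u_{1}(x+t\omega)\,d\omega=\frac{t}{4\pi}\int_{|y-x|\ge t}\frac{|y-x|-t}{|y-x|^{2}}(\omega\cdot\nabla)^{2}u_1(y)\,dy,\qquad \omega=\frac{y-x}{|y-x|}.
\end{equation*}
The elementary one-variable maximisation $\sup_{r\ge t}\frac{t(r-t)}{r^{2}}=\frac14$ (attained at $r=2t$) then bounds this term by a constant multiple of $\|u_1\|_{\dot{W}^{2,1}}$.

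For the two $u_0$ terms, the key algebraic observation is that their sum equals $\frac{1}{4\pi}\int_{S^{2}}\frac{d}{dt}\bigl[tu_0(x+t\omega)\bigr]\,d\omega$. Writing $q(s)=u_0(x+s\omega)$ (so $q^{(k)}(s)=(\omega\cdot\nabla)^{k}u_0(x+s\omega)$) and applying Lemma \ref{xuyao8900} separately with $m=3$ to $q$ and with $m=2$ to $q'$ yields
\begin{equation*}
q(t)=-\tfrac12\int_{t}^{\infty}(s-t)^{2}q'''(s)\,ds,\qquad q'(t)=\int_{t}^{\infty}(s-t)q'''(s)\,ds,
\end{equation*}
and the cancellation in $q(t)+tq'(t)=\tfrac12\int_t^\infty (s-t)(3t-s)q'''(s)\,ds$ kills the $q''$ terms and leaves only $(\omega\cdot\nabla)^{3}u_0$. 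After the same change of variables, the $u_0$ contribution becomes
\begin{equation*}
\frac{1}{8\pi}\int_{|y-x|\ge t}\frac{(|y-x|-t)(3t-|y-x|)}{|y-x|^{2}}(\omega\cdot\nabla)^{3}u_0(y)\,dy,
\end{equation*}
and one checks $\sup_{r\ge t}\bigl|\frac{(r-t)(3t-r)}{r^{2}}\bigr|\le 1$ (the kernel vanishes at $r=t$ and $r=3t$, peaks at $\tfrac13$ on $(t,3t)$, and rises to $1$ as $r\to\infty$), controlling this piece by (a constant multiple of) $\frac{1}{8\pi}\|u_0\|_{\dot{W}^{3,1}}$.

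Finally, for \eqref{xuyaodee890} I would simply apply \eqref{xuyao890} to $v:=\partial_t u$, which solves $\Box v=0$ with initial data $v(0,\cdot)=u_1$ and $\partial_t v(0,\cdot)=\Delta u_0$ (using $\Box u=0$ at $t=0$); the bound $\|\Delta u_0\|_{\dot{W}^{2,1}}\lesssim\|u_0\|_{\dot{W}^{4,1}}$ then closes the estimate. The main technical obstacle is the $u_0$-combination in \eqref{xuyao890}: using the same $m$ on both $u_0$ terms either lets a factor of $t$ survive in the kernel (so the resulting bound grows in time) or forces a fourth derivative of $u_0$ (stronger than the hypotheses allow); only the mixed-order choice $m=3$ on $q$ and $m=2$ on $q'$, together with the exact cancellation in $q(t)+tq'(t)$, simultaneously produces a kernel that is uniformly bounded in $t$ and lands precisely on $(\omega\cdot\nabla)^{3}u_0$.
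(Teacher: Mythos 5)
Your proof is correct and follows essentially the same route as the paper: Poisson's representation, Lemma~\ref{xuyao8900} applied with $m=3$ to $u_0(x+\cdot\,\omega)$ and $m=2$ to $u_1(x+\cdot\,\omega)$ and to $\partial_r\bigl(u_0(x+\cdot\,\omega)\bigr)$ (so that every term lands on $(\omega\cdot\nabla)^3u_0$ or $(\omega\cdot\nabla)^2u_1$), the change of variables $y=x+s\omega$, and a uniform-in-$t$ kernel bound; the time-derivative estimate \eqref{xuyaodee890} is obtained exactly as in the paper by applying \eqref{xuyao890} to $\partial_tu$, which solves the free wave equation with data $(u_1,\Delta u_0)$. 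The one place where you diverge is the handling of the two $u_0$-terms: you combine them as $\tfrac{d}{dt}\bigl[tq(t)\bigr]=q(t)+tq'(t)$ and exploit the signed cancellation to reach the kernel $\tfrac12(s-t)(3t-s)$. The paper instead simply applies the triangle inequality to the two absolute values, obtaining $\tfrac12(s-t)^2+t(s-t)=\tfrac12(s-t)(s+t)=\tfrac12(s^2-t^2)\le\tfrac12 s^2$, which also gives the constant $\tfrac{1}{8\pi}$ with no cancellation needed. So your closing claim that the ``exact cancellation in $q(t)+tq'(t)$'' is what makes the kernel uniformly bounded is an overstatement — the mixed-order choice $m=3$/$m=2$ is indeed essential (for the reasons you give), but once it is made, naive addition of absolute values already yields a $t$-independent kernel of the same size; your signed kernel $(s-t)(3t-s)/s^2$ and the paper's $(s^2-t^2)/s^2$ both have supremum $1$. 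One last nitpick: you write ``a constant multiple of $\|u_1\|_{\dot W^{2,1}}$'' etc., but since the sharp constant $\tfrac{1}{8\pi}$ is used quantitatively in Remark~\ref{rem45888} (it is what makes $\lambda_0<4\pi^2$ force $|\Theta|<\pi/2$), you should carry the exact factors $\tfrac14$ and $1$ from your kernel suprema through to the end rather than absorb them into an unspecified constant.
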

\begin{proof}
 By Poisson's formula of three dimensional linear wave equation, we have
\begin{align}\label{shou89}
&u(t,x)\nonumber\\
&=\frac{1}{4\pi t}\int_{|y-x|=t}u_1(y)dS_y+\partial_t\big(\frac{1}{4\pi t}\int_{|y-x|=t}u_0(y)dS_y\big)\nonumber\\
&=\frac{t}{4\pi }\int_{|\omega|=1}u_1(x+t\omega)d\omega+\partial_t\big(\frac{t}{4\pi }\int_{|\omega|=1}u_0(x+t\omega)d\omega\big)\nonumber\\
&=\frac{t}{4\pi }\int_{|\omega|=1}u_1(x+t\omega)d\omega+\frac{t}{4\pi }\int_{|\omega|=1}\partial_t\big(u_0(x+t\omega)\big)d\omega+\frac{1}{4\pi }\int_{|\omega|=1}u_0(x+t\omega)d\omega.
\end{align}
 Lemma \ref{xuyao8900} implies
\begin{align}\label{diyi111}
u_1(x+t\omega)&=\int_{t}^{+\infty}(r-t)\partial^2_r\big(u_1(x+r\omega)\big)dr,\\\label{xjk9089}
\partial_t\big(u_0(x+t\omega)\big)&=\int_{t}^{+\infty}(r-t)\partial^3_r\big(u_0(x+r\omega)\big)dr,\\\label{xjk90l987889}
u_0(x+t\omega)&=-\frac{1}{2}\int_{t}^{+\infty}(r-t)^2\partial^3_r\big(u_0(x+r\omega)\big)dr.
\end{align}
By \eqref{diyi111}, we have
\begin{align}\label{sho9000}
&\Big|\frac{t}{4\pi }\int_{|\omega|=1}u_1(x+t\omega)d\omega\Big|\nonumber\\
&\leq \frac{1}{4\pi }\int_{|\omega|=1}\int_{t}^{+\infty}t(r-t)\big|\partial^2_r\big(u_1(x+r\omega)\big)\big|drd\omega\nonumber\\
&\leq \frac{1}{16\pi }\int_{|\omega|=1}\int_{t}^{+\infty}\big|\partial^2_r\big(u_1(x+r\omega)\big)\big|r^2drd\omega\nonumber\\
&\leq \frac{1}{16\pi}\|u_1\|_{\dot{W}^{2,1}(\mathbb{R}^3)}.
\end{align}
Thanks to \eqref{xjk9089} and \eqref{xjk90l987889}, we also have
\begin{align}\label{s890huy}
&\Big|\frac{t}{4\pi }\int_{|\omega|=1}\partial_t\big(u_0(x+t\omega)\big)d\omega\Big|+\Big|\frac{1}{4\pi }\int_{|\omega|=1}u_0(x+t\omega)d\omega\Big|\nonumber\\
&\leq \frac{1}{8\pi }\int_{|\omega|=1}\int_{t}^{+\infty}(r+t)(r-t)\big|\partial^3_r\big(u_0(x+r\omega)\big)\big|drd\omega\nonumber\\
&\leq \frac{1}{8\pi }\int_{|\omega|=1}\int_{t}^{+\infty}\big|\partial^3_r\big(u_0(x+r\omega)\big)\big|r^2drd\omega\nonumber\\
&\leq \frac{1}{8\pi}\|u_0\|_{\dot{W}^{3,1}(\mathbb{R}^3)}.
\end{align}
Thus, the estimate \eqref{xuyao890} follows from \eqref{shou89}, \eqref{sho9000} and \eqref{s890huy}. Note that $\partial_tu$ satisfies
\begin{align}
\begin{cases}
\Box \partial_tu(t,x)=0,~(t,x)\in \mathbb{R}^{1+3},\\
t=0: \partial_tu=u_1,\partial^2_tu=\Delta u_0,~x\in \mathbb{R}^3.
\end{cases}
\end{align}
Therefore, we can get estimate \eqref{xuyaodee890} similarly.
\end{proof}
\begin{rem}\label{rem45888}
Note that the function $\Theta(t,x)$ satisfies Cauchy problem \eqref{xianxingeeeee}. It follows from Lemma {\rm{\ref{3Dbound}}}, \eqref{2mmn} and \eqref{x867uii} that
\begin{align}\label{xuyao86667790}
|\Theta(t,x)|\leq \frac{1}{8\pi}\big(\|\Theta_0\|_{\dot{W}^{3,1}(\mathbb{R}^3)}+\|\Theta_1\|_{\dot{W}^{2,1}(\mathbb{R}^3)}\big)\leq \frac{1}{8\pi}\lambda_0<\frac{\pi}{2}
\end{align}
and
\begin{align}\label{xuyao666776dee890}
|\partial_t\Theta(t,x)|\leq \frac{1}{8\pi}\big(\|\Theta_0\|_{\dot{W}^{4,1}(\mathbb{R}^3)}+\|\Theta_1\|_{\dot{W}^{3,1}(\mathbb{R}^3)}\big)\leq \frac{1}{8\pi}\lambda_1<1.
\end{align}
\end{rem}

We can also get the following pointwise estimate of linear wave equations in two dimensions.

\begin{Lemma}\label{2Dbound}
Let $u$ is the solution of the following two dimensional linear wave equation
\begin{align}
\begin{cases}
\Box u(t,x)=0,~(t,x)\in \mathbb{R}^{1+2},\\
t=0: u=u_0(x),\partial_tu=u_1(x),~x\in \mathbb{R}^2,
\end{cases}
\end{align}
where $u_0$ and $u_1$ are smooth functions with compact supports in $|x|\leq 1$.
Then we have
\begin{align}\label{hj89900}
\|u(t,\cdot)\|_{L^{\infty}(\mathbb{R}^2)}\leq \frac{1}{4}\big(\|u_0\|_{\dot{W}^{2,1}(\mathbb{R}^2)}+\|u_1\|_{\dot{W}^{1,1}(\mathbb{R}^2)}\big)
\end{align}
and
\begin{align}\label{hjudddddi8900}
\|\partial_tu(t,\cdot)\|_{L^{\infty}(\mathbb{R}^2)}\leq \frac{1}{4}\big(\|u_0\|_{\dot{W}^{3,1}(\mathbb{R}^2)}+\|u_1\|_{\dot{W}^{2,1}(\mathbb{R}^2)}\big).
\end{align}
\end{Lemma}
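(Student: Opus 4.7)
The plan is to mimic the 3D argument of Lemma \ref{3Dbound}, replacing the spherical mean formula by the 2D Poisson (Kirchhoff--Hadamard) representation
\begin{align*}
u(t,x) = \frac{1}{2\pi}\int_{|y-x|<t}\frac{u_1(y)}{\sqrt{t^2-|y-x|^2}}\,dy + \partial_t\left[\frac{1}{2\pi}\int_{|y-x|<t}\frac{u_0(y)}{\sqrt{t^2-|y-x|^2}}\,dy\right].
\end{align*}
The natural rescaling $y = x+t\sigma\theta$ with $\sigma\in[0,1)$, $\theta\in S^1$ (Jacobian $dy = t^2\sigma\,d\sigma\,d\theta$) converts both integrals into the form $\frac{t}{2\pi}\int_0^1\frac{\sigma}{\sqrt{1-\sigma^2}}\int_{S^1}u_j(x+t\sigma\theta)\,d\theta\,d\sigma$.

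Next I would apply Lemma \ref{xuyao8900} to the factors $u_j(x+t\sigma\theta)$, using $m=1$ for $u_1$ and $m=2$ for $u_0$:
\begin{align*}
u_1(x+t\sigma\theta) = -\int_{t\sigma}^{+\infty}\partial_s u_1(x+s\theta)\,ds, \quad u_0(x+t\sigma\theta) = \int_{t\sigma}^{+\infty}(s-t\sigma)\partial_s^2 u_0(x+s\theta)\,ds.
\end{align*}
After differentiating the $u_0$-integral in $t$ and using a companion $m=1$ representation for the arising $\partial_r u_0$ term, the entire $u_0$-contribution collapses into the single integral
\begin{align*}
\frac{1}{2\pi}\int_0^1\frac{\sigma}{\sqrt{1-\sigma^2}}\int_{S^1}\int_{t\sigma}^{+\infty}(s-2t\sigma)\partial_s^2 u_0(x+s\theta)\,ds\,d\theta\,d\sigma.
\end{align*}
I would then swap the $\sigma$- and $s$-integrations and evaluate the inner $\sigma$-integral, which gives
\begin{align*}
t\int_0^{\min(1,s/t)}\frac{\sigma\,d\sigma}{\sqrt{1-\sigma^2}} = \begin{cases} t-\sqrt{t^2-s^2}, & s\le t,\\ t, & s>t.\end{cases}
\end{align*}

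The key algebraic inequality is the \emph{uniform} bound $t-\sqrt{t^2-s^2}\le s$ (for $0\le s\le t$), which follows from $(t-s)^2\le t^2-s^2$; together with $t\le s$ for $s>t$ and the pointwise bound $|s-2t\sigma|\le s$ whenever $s\ge t\sigma$, it yields exactly the uniform weight needed to match the 2D polar Jacobian $dy = s\,ds\,d\theta$. Using $|\partial_s^k f(x+s\theta)|\le|\nabla^k f(x+s\theta)|$ converts the resulting integrals back into volume integrals on $\mathbb{R}^2$ and gives
\begin{align*}
|u(t,x)|\le \frac{1}{2\pi}\|u_1\|_{\dot{W}^{1,1}(\mathbb{R}^2)} + \frac{1}{2\pi}\|u_0\|_{\dot{W}^{2,1}(\mathbb{R}^2)},
\end{align*}
from which \eqref{hj89900} follows since $1/(2\pi)<1/4$. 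To obtain \eqref{hjudddddi8900} I observe that $\partial_t u$ itself solves the 2D wave equation with Cauchy data $(u_1,\Delta u_0)$; applying \eqref{hj89900} to $\partial_t u$, together with the trivial inequality $\|\Delta u_0\|_{\dot{W}^{1,1}}\le\|u_0\|_{\dot{W}^{3,1}}$, closes the argument.

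The chief technical obstacle, absent in 3D, is to control the singular Poisson kernel $(t^2-r^2)^{-1/2}$ which is supported on the \emph{entire} interior of the light cone (no strong Huygens' principle). The crucial point of the proof is recognizing that, after the substitution $y=x+t\sigma\theta$ and after extending each $u_j$-factor to $[t\sigma,+\infty)$ via Lemma \ref{xuyao8900}, the weight $t\int_0^{\min(1,s/t)}\sigma/\sqrt{1-\sigma^2}\,d\sigma$ is pointwise dominated by $s$---precisely the factor needed to recover the 2D polar Jacobian and thereby reduce the estimate to an $L^1$-norm of derivatives of the data.
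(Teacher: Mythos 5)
Your proposal is correct, and while it uses the same raw ingredients as the paper's proof (the $2$-D Poisson representation, polar coordinates, and Lemma~\ref{xuyao8900} to extend $u_j(x+r\omega)$ as an integral over $[r,\infty)$), the estimation step is genuinely different. The paper bounds the singular radial integral via a H\"older-type separation
\begin{align*}
\int_0^t\frac{g(r)}{\sqrt{t^2-r^2}}\,dr \;\leq\; \Bigl(\int_0^t\frac{dr}{\sqrt{t^2-r^2}}\Bigr)\sup_{0\leq r\leq t}g(r) \;=\; \frac{\pi}{2}\,\sup_{0\leq r\leq t}g(r),
\end{align*}
and then bounds the remaining weight by $\rho$ using $r\leq\rho$ and $r\leq t$. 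You instead swap the $\sigma$- and $s$-integrations (Fubini) and evaluate the inner singular integral exactly, obtaining the kernel $t-\sqrt{t^2-s^2}$ (or $t$) which you dominate pointwise by $s$; for the $u_0$-part you also combine the two contributions algebraically \emph{before} taking absolute values, getting the factor $s-2t\sigma$ (versus the paper's $r^2+r(\rho-r)=r\rho$ obtained by summing absolute values), and use $|s-2t\sigma|\leq s$ when $s\geq t\sigma$. The net effect is that the factor $\pi/2$ never appears, and you arrive at the sharper constant $1/(2\pi)$ in place of $1/4$, which of course still implies \eqref{hj89900}. The derivation of \eqref{hjudddddi8900} from \eqref{hj89900} via the Cauchy data $(u_1,\Delta u_0)$ for $\partial_t u$ is the same in both. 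Everything checks: the $(s-2t\sigma)$ combination is what you get from $\partial_t\bigl[t\int_0^1\tfrac{\sigma}{\sqrt{1-\sigma^2}}\int_{S^1}u_0\,d\theta\,d\sigma\bigr]$ after the $m=2$ and $m=1$ representations, the identity $t\int_0^{\min(1,s/t)}\tfrac{\sigma\,d\sigma}{\sqrt{1-\sigma^2}}=t-\sqrt{t^2-s^2}$ for $s\leq t$ and $=t$ for $s>t$ is correct, and $t-\sqrt{t^2-s^2}\leq s$ follows from $(t-s)^2\leq t^2-s^2$ exactly as you state.
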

\begin{proof}
 By Poisson's formula of 2-D linear wave equation, we have
\begin{align}\label{xu89uioo}
&u(t,x)\nonumber\\
&=\frac{1}{2\pi }\int_{|y-x|\leq t}\frac{u_1(y)}{\sqrt{t^2-|y-x|^2}}dy+\partial_t\big(\frac{1}{2\pi }\int_{|y-x|\leq t}\frac{u_0(y)}{\sqrt{t^2-|y-x|^2}}dy\big)\nonumber\\
&=\frac{1}{2\pi }\int_0^{t}\int_{|\omega|=1}\frac{u_1(x+r\omega)}{\sqrt{t^2-r^2}}rd\omega dr+\frac{1}{2\pi t}\int_0^{t}\int_{|\omega|=1}\frac{\partial_r\big(u_0(x+r\omega)\big)}{\sqrt{t^2-r^2}}r^2d\omega dr\nonumber\\
&+\frac{1}{2\pi t}\int_0^{t}\int_{|\omega|=1}\frac{u_0(x+r\omega)}{\sqrt{t^2-r^2}}rd\omega dr.
\end{align}
By Lemma \ref{xuyao8900}, we get
\begin{align}\label{impi98756}
u_1(x+r\omega)&=-\int_{r}^{+\infty}\partial_{\rho}\big(u_1(x+\rho\omega)\big)d\rho,\\\label{xu89dddd876}
\partial_r\big(u_0(x+r\omega)\big)&=-\int_{r}^{+\infty}\partial^2_{\rho}\big(u_0(x+\rho\omega)\big)d\rho,\\\label{impddddddi98756}
u_0(x+r\omega)&=\int_{r}^{+\infty}(\rho-r)\partial^2_{\rho}\big(u_0(x+\rho\omega)\big)d\rho.
\end{align}
%\begin{align}
%u_1(x+r\omega)&=-\int_{r}^{+\infty}\partial_{\rho}\big(u_1(x+\rho\omega)\big)d\rho=-\int_{r}^{+\infty}\partial_{\rho}\big(u_1(x+\rho\omega)\big)d(\rho-r)\nonumber\\
%&=-(\rho-r)\partial_{\rho}\big(u_1(x+\rho\omega)\big)\big|_{\rho=r}^{\rho=+\infty}+\int_{r}^{+\infty}(\rho-r)\partial^2_{\rho}\big(u_1(x+\rho\omega)\big)d\rho\nonumber\\
%&=\int_{r}^{+\infty}(\rho-r)\partial^2_{\rho}\big(u_1(x+\rho\omega)\big)d\rho.
%\end{align}
Then, \eqref{impi98756} implies
\begin{align}\label{xuyao1fff}
&\Big|\frac{1}{2\pi }\int_0^{t}\int_{|\omega|=1}\frac{u_1(x+r\omega)}{\sqrt{t^2-r^2}}rd\omega dr\Big|\nonumber\\
&\leq \frac{1}{2\pi }\int_0^{t}\frac{1}{\sqrt{t^2-r^2}} dr\sup_{0\leq r\leq t} \int_{r}^{+\infty}\int_{|\omega|=1}\big|\partial_{\rho}\big(u_1(x+\rho\omega)\big)\big|r d\omega d\rho\nonumber\\
&\leq \frac{1}{2\pi }\frac{\pi}{2}\sup_{0\leq r\leq t} \int_{r}^{+\infty}\int_{|\omega|=1}\big|\partial_{\rho}\big(u_1(x+\rho\omega)\big)\big|\rho d\omega d\rho\nonumber\\
&\leq \frac{1}{4}\|u_1\|_{\dot{W}^{1,1}(\mathbb{R}^2)}.
\end{align}
The combination of \eqref{xu89dddd876} and \eqref{impddddddi98756} gives
\begin{align}\label{xuyao5681}
&\Big|\frac{1}{2\pi t}\int_0^{t}\int_{|\omega|=1}\frac{\partial_r\big(u_0(x+r\omega)\big)}{\sqrt{t^2-r^2}}r^2d\omega dr\Big|+\Big|
\frac{1}{2\pi t}\int_0^{t}\int_{|\omega|=1}\frac{u_0(x+r\omega)}{\sqrt{t^2-r^2}}rd\omega dr\Big|\nonumber\\
&\leq \frac{1}{2\pi }\int_0^{t}\frac{1}{\sqrt{t^2-r^2}} dr\sup_{0\leq r\leq t} \int_{r}^{+\infty}\int_{|\omega|=1}\big|\partial^2_{\rho}\big(u_0(x+\rho\omega)\big)\big|\frac{r^2+r(\rho-r)}{t} d\omega d\rho\nonumber\\
&\leq \frac{1}{2\pi }\frac{\pi}{2}\sup_{0\leq r\leq t} \int_{r}^{+\infty}\int_{|\omega|=1}\big|\partial^2_{\rho}\big(u_1(x+\rho\omega)\big)\big|\rho d\omega d\rho\nonumber\\
&\leq \frac{1}{4}\|u_0\|_{\dot{W}^{2,1}(\mathbb{R}^2)}.
\end{align}
%We have
%\begin{align}\label{xuyao2}
%\int_0^{t}\frac{1}{\sqrt{t^2-r^2}} dr=\frac{\pi}{2}
%\end{align}
%and
%\begin{align}\label{xuyao3}
%\sup_{0\leq r\leq t} \int_{r}^{+\infty}\int_{|\omega|=1}\big|\partial_{\rho}\big(u_1(x+\rho\omega)\big)\big|\rho d\omega d\rho\leq \|u_1\|_{\dot{W}^{1,1}(\mathbb{R}^2)}.
%\end{align}
%By \eqref{xuyao1}, \eqref{xuyao2} and \eqref{xuyao3}, we have
% \begin{align}
%\|u(t,\cdot)\|_{L^{\infty}(\mathbb{R}^2)}\leq \frac{1}{4}\big(\|u_0\|_{\dot{W}^{2,1}(\mathbb{R}^2)}+\|u_1\|_{\dot{W}^{1,1}(\mathbb{R}^2)}\big).
%\end{align}
Thus \eqref{hj89900} follows from \eqref{xu89uioo}, \eqref{xuyao1fff} and \eqref{xuyao5681}. Noting that $\partial_tu$ satisfies
\begin{align}
\begin{cases}
\Box \partial_tu(t,x)=0,~(t,x)\in \mathbb{R}^{1+2},\\
t=0: \partial_tu=u_1,\partial^2_tu=\Delta u_0,~x\in \mathbb{R}^2,
\end{cases}
\end{align}
we can get \eqref{hjudddddi8900} similarly.
\end{proof}
\begin{rem}\label{rem9999888}
Note that the function $\Theta(t,x)$ satisfies Cauchy problem \eqref{xianxingeeeee}. It follows from Lemma {\rm{\ref{2Dbound}}}, \eqref{3mmn} and \eqref{xui89755hjj} that
\begin{align}\label{xuyao86669999}
|\Theta(t,x)|\leq \frac{1}{4}\big(\|\Theta_0\|_{\dot{W}^{2,1}(\mathbb{R}^2)}+\|\Theta_1\|_{\dot{W}^{1,1}(\mathbb{R}^2)}\big)\leq \frac{1}{4}\widetilde{\lambda}_0<\frac{\pi}{2}
\end{align}
and
\begin{align}\label{xuyao998878ee890}
|\partial_t\Theta(t,x)|\leq \frac{1}{4}\big(\|\Theta_0\|_{\dot{W}^{3,1}(\mathbb{R}^2)}+\|\Theta_1\|_{\dot{W}^{2,1}(\mathbb{R}^2)}\big)\leq \frac{1}{4}\widetilde{\lambda}_1<1.
\end{align}
\end{rem}

The following lemma on $L^1$--$L^{\infty}$ estimates can be found in H\"{o}rmander \cite{MR956961} and Klainerman \cite{MR733719}.
\begin{Lemma}\label{Linfty}
Let $u$ satisfy
\begin{align}\label{Linearwave}
\begin{cases}
\Box u(t,x)=F(t,x),~(t,x)\in \mathbb{R}^{1+n},\\
t=0: u=u_0,\partial_tu=u_1,~x\in \mathbb{R}^n,
\end{cases}
\end{align}
where the initial data $u_0$ and $u_1$ are supported in $|x|\leq 1$.
Then we have
\begin{align}
&\|\langle t+|\cdot|\rangle^{\frac{n-1}{2}}\langle t-|\cdot|\rangle^{l}u(t,\cdot)\|_{L^{\infty}(\mathbb{R}^n)}\nonumber\\
&\leq C\big(\|u_0\|_{W^{n,1}(\mathbb{R}^n)}+\|u_1\|_{W^{n-1,1}(\mathbb{R}^n)}\big) + C\int_0^t(1+\tau)^{-\frac{n-1}{2}+l}\|F(\tau,\cdot)\|_{\Gamma,n-1,1}d\tau,
\end{align}
where $0\leq l\leq \frac{n-1}{2}$.
\end{Lemma}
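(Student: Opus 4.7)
The plan is to split $u$ into a homogeneous piece $u^{\mathrm{hom}}$, which absorbs the data $(u_0,u_1)$, and a Duhamel piece $u^{\mathrm{inh}}(t,x)=\int_0^t w(t-\tau,x;\tau)\,d\tau$, where $w(s,\cdot;\tau)$ solves $\Box w=0$ with $w|_{s=0}=0,\ \partial_s w|_{s=0}=F(\tau,\cdot)$. Each piece is estimated separately, and the announced bound is then just the sum.

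For $u^{\mathrm{hom}}$, the compact-support hypothesis on the initial data is decisive: by finite speed of propagation, $u^{\mathrm{hom}}(t,\cdot)$ lives in the thin annulus $t-1\le |x|\le t+1$, so $\langle t-|x|\rangle\le \sqrt{2}$ on the support and the weight $\langle t-|x|\rangle^{l}$ is harmless. It therefore suffices to prove the sharp dispersive estimate
\begin{align*}
\langle t+|x|\rangle^{\frac{n-1}{2}}|u^{\mathrm{hom}}(t,x)|\le C\bigl(\|u_0\|_{W^{n,1}(\mathbb{R}^n)}+\|u_1\|_{W^{n-1,1}(\mathbb{R}^n)}\bigr),
\end{align*}
which for $n=2,3$ follows directly from Poisson's and Kirchhoff's formulas via the same integration-by-parts argument already executed in the proofs of Lemma \ref{3Dbound} and Lemma \ref{2Dbound}, only tracking the exact Sobolev index instead of sharp constants.

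For $u^{\mathrm{inh}}$, the core ingredient is the pointwise kernel estimate: for all $\tau\ge 0,\ s\ge 0,\ x\in\mathbb{R}^n$, and $0\le l\le \tfrac{n-1}{2}$,
\begin{align*}
\langle s+\tau+|x|\rangle^{\frac{n-1}{2}}\langle s+\tau-|x|\rangle^{l}\,|w(s,x;\tau)|\le C(1+\tau)^{-\frac{n-1}{2}+l}\|F(\tau,\cdot)\|_{\Gamma,n-1,1}.
\end{align*}
Integrating this bound in $\tau$ from $0$ to $t$ and using $s=t-\tau$ produces exactly the advertised source term, so the whole problem reduces to this single kernel inequality. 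The two endpoints of the inequality are standard: at $l=0$ one obtains the classical sharp $L^1\to L^\infty$ decay $(1+\tau)^{-(n-1)/2}$ of the Riemann function of $\Box$, requiring only $n-1$ ordinary derivatives on $F(\tau,\cdot)$; at $l=(n-1)/2$ the $\tau$-weight disappears and one instead trades the entire dispersive decay for the $\langle t-|x|\rangle^{(n-1)/2}$ gain. The remaining values of $l$ are obtained by interpolation.

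The main obstacle is precisely this endpoint estimate at $l=(n-1)/2$ and its interpolation with $l=0$: one has to commute the scaling $S$ and Lorentz boosts $L_i$ across the explicit fundamental solution, converting each occurrence into ordinary derivatives at time $\tau$ plus lower-order error terms. Each successful commutation converts a factor $(1+\tau)^{-1}$ in the original dispersive kernel into a factor $\langle t-|x|\rangle$ on the solution side, which is exactly the trade needed. This computation is carried out in detail in H\"ormander \cite{MR956961} and a version of it appears in Klainerman \cite{MR733719}; once that technical step is accepted, combining it with the homogeneous estimate above completes the proof.
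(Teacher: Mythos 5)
The paper offers no proof of Lemma \ref{Linfty}; the sentence preceding it simply attributes the estimate to H\"ormander \cite{MR956961} and Klainerman \cite{MR733719}. There is therefore no argument of the paper's to compare against, and any claim to have proved the lemma must actually reproduce the computation from those sources.

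Your decomposition is sensible, and the homogeneous part is indeed immediate from finite speed of propagation together with the dispersive estimates already carried out in Lemmas \ref{3Dbound} and \ref{2Dbound}. The problem is the Duhamel part: you load everything into the pointwise-in-$\tau$ kernel inequality you display and then, in your own words, ``accept that technical step'' and declare the proof complete. But that inequality \emph{is} the lemma, merely repackaged one time-slice at a time; nothing has been reduced. Moreover it is not obviously what H\"ormander or Klainerman establish. Their arguments bound the Duhamel \emph{integral} as a whole: one works on the support of the forward fundamental solution, passes to the null variables $t\pm r$, and integrates by parts with $S$ and $L_i$ under the double space-time integral, and the $(1+\tau)^{-\frac{n-1}{2}+l}$ weight together with the $\|\Gamma^a F(\tau,\cdot)\|_{L^1}$ norm emerge from that integrated bookkeeping, not from a fiber-by-fiber kernel bound. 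A slice-wise estimate uniform in $s$, which is what you assert, is a genuinely different (and cleaner) statement that would need its own derivation. In particular, at $l=0$ and small $s$ it forces $\|F(\tau,\cdot)\|_{L^\infty}$ to be controlled by $(1+\tau)^{-(n-1)}\|F(\tau,\cdot)\|_{\Gamma,n-1,1}$, which requires a spatial support hypothesis on each slice $F(\tau,\cdot)$ (so that the $(1+\tau)$-growth built into $S$ and $L_i$ at time $\tau$ is actually effective) that appears neither in the lemma's statement nor in your writeup. In short, the proposal replaces one unproved statement with another, differently phrased, also unproved, and not obviously equivalent statement, and then defers it to the very references the paper cites. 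Either prove your kernel inequality or simply cite the Duhamel-integral estimate as the paper does; the decomposition framing by itself does not close the argument.
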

\subsection{Some estimates on product functions and composite functions}
For getting the estimates of nonlinear terms, we will give the following estimates on product functions and composite functions.
\begin{Lemma}\label{daoshu}
Assume that $u$ and $v$ are smooth functions supported in $|x|\leq t+1$. Then we have
\begin{align}\label{xui89njkhhuio}
\|u Dv\|_{L^{\infty}(\mathbb{R}^3)}
\leq C\langle t\rangle^{-\frac{3}{2}}E_{3}^{\frac{1}{2}}(u(t))E_{3}^{\frac{1}{2}}(v(t)).
\end{align}
\end{Lemma}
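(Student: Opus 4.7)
The plan is to prove the two pointwise bounds
\[
|u(t,x)|\leq C\langle t\rangle^{-1/2}E_3^{1/2}(u(t)),\qquad |Dv(t,x)|\leq C\langle t\rangle^{-1}E_3^{1/2}(v(t)),
\]
and then multiply. The second bound follows by applying the Klainerman-Sobolev inequality (Lemma~\ref{dfft6889} with $n=3$, $k=2$) to $Dv$ and commuting derivatives past $\Gamma^a$ via Lemma~\ref{LEM2134}, yielding $\|Dv\|_{\Gamma,2,2}\leq CE_3^{1/2}(v)$. Since $Dv$ is supported in $|x|\leq t+1$, we have $\langle t+r\rangle\geq \langle t\rangle$ on its support, and combining with the trivial $\langle t-r\rangle^{1/2}\geq 1$ gives the claim.

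For the $u$-bound I split $\mathbb{R}^3$ into the outer region $r\geq \langle t\rangle/2$ and the inner region $r\leq \langle t\rangle/2$. In the outer region the rotation-enhanced Sobolev estimate \eqref{Sobo2} gives $r^{1/2}|u|\leq C\sum_{|a|\leq 1}\|\nabla\Omega^a u\|_{L^2}\leq CE_3^{1/2}(u)$, and $r^{-1/2}\leq C\langle t\rangle^{-1/2}$ yields the target decay. In the inner region $\langle t+r\rangle\sim \langle t-r\rangle\sim \langle t\rangle$, so Klainerman-Sobolev applied to $u$ itself produces $|u|\leq C\langle t\rangle^{-3/2}\|u\|_{\Gamma,2,2}$. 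To close, I must bound $\|u\|_{\Gamma,2,2}$ by the derivative-only energy $E_3^{1/2}(u)$. This is done via Hardy's inequality (Lemma~\ref{rty7ffff78}) together with the observation that, at fixed $t$, each $\Gamma^a u$ remains supported in $|x|\leq t+1$, which gives
\[
\|\Gamma^a u\|_{L^2}\leq \langle t\rangle\|\langle t-r\rangle^{-1}\Gamma^a u\|_{L^2}\leq C\langle t\rangle\|\nabla\Gamma^a u\|_{L^2}\leq C\langle t\rangle E_3^{1/2}(u),\quad |a|\leq 2.
\]
Feeding this back cancels one power of $\langle t\rangle$ and matches the outer bound.

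The principal obstacle is precisely this last conversion. The Klainerman-Sobolev right-hand side $\|u\|_{\Gamma,2,2}$ contains non-differential norms such as $\|u\|_{L^2}$ and $\|\Omega^a u\|_{L^2}$ that the energy $E_3$ does not directly control. Hardy plus the support condition $|x|\leq t+1$ costs a factor of $\langle t\rangle$, and this factor can be absorbed only by the $\langle t-r\rangle^{-1/2}$ gain away from the light cone, forcing the two-region split. Near the cone, where that gain disappears, the rotation-enhanced inequality \eqref{Sobo2} is essential, since its right-hand side already involves only $\nabla\Omega^a u$ and thus bypasses the Hardy step entirely.
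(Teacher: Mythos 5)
Your argument is correct, but it follows a genuinely different inner-region route from the paper's. Both proofs split into an inner region and an outer region near the light cone, and in the outer region both use the rotation-enhanced Sobolev estimate \eqref{Sobo2} to extract $r^{-1/2}$ decay from $u$ and Klainerman--Sobolev to extract $\langle t+r\rangle^{-1}$ decay from $Dv$, exactly as you wrote. The difference is the inner region, where the paper keeps the product $uDv$ together: it bounds $\|u\|_{L^{\infty}}$ by the plain spatial Sobolev inequality $\|u\|_{L^{\infty}}\leq C\|\nabla u\|_{L^{2}}+C\|\nabla^{2}u\|_{L^{2}}$ with no time decay at all, and extracts the entire $\langle t\rangle^{-3/2}$ factor from the Klainerman--Sobolev weight on $Dv$, since $\langle t+r\rangle\langle t-r\rangle^{1/2}\gtrsim\langle t\rangle^{3/2}$ when $r\leq t/4$. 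You instead decouple the two factors and prove the standalone pointwise bound $|u|\leq C\langle t\rangle^{-1/2}E_{3}^{1/2}(u)$; to close the inner region you apply Klainerman--Sobolev to $u$ itself, which costs the non-differential norm $\|u\|_{\Gamma,2,2}$, and then you repair that with Hardy's inequality \eqref{Hardy} plus the support constraint $|x|\leq t+1$, giving up a factor $\langle t\rangle$ that is then recovered from the Klainerman--Sobolev weight. Your route is a bit more elaborate and invokes Hardy's inequality, which the paper's proof of this particular lemma avoids entirely, but it produces a modular decay estimate for $u$ alone that can be reused independently of $Dv$, whereas the paper's estimate is tailored to the product.
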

\begin{proof}
Without loss of generality, we can assume $t\geq 2$. When $t\leq 2$, \eqref{xui89njkhhuio} is just a consequence of the following Sobolev inequality
\begin{align}\label{xuyooo}
\|u\|_{L^{\infty}(\mathbb{R}^3)}\leq C\|\nabla u\|_{L^{2}(\mathbb{R}^3)}+C\|\nabla^2 u\|_{L^{2}(\mathbb{R}^3)}.
\end{align}
It follows from Klainerman-Sobolev inequality \eqref{Sobo} for $n=3$ and \eqref{xuyooo} that
\begin{align}\label{zhuiyuddf}
&\|uDv\|_{L^{\infty}(r\leq  t/4)} \nonumber\\
&\leq C\langle t\rangle^{-\frac{3}{2}} \|u\|_{L^{\infty}(\mathbb{R}^3)}\|\langle t+r\rangle\langle t-r\rangle^{\frac{1}{2}}Dv\|_{L^{\infty}(r\leq  t/4)}
 \nonumber\\
&\leq C\langle t\rangle^{-\frac{3}{2}}E^{\frac{1}{2}}_3(u(t))E^{\frac{1}{2}}_3(v(t)).
\end{align}
By Klainerman-Sobolev inequality \eqref{Sobo} for $n=3$ and \eqref{Sobo2}, we have
\begin{align}\label{zhufffiyuddf}
&\|uDv\|_{L^{\infty}(r\geq  t/4)} \nonumber\\
&\leq C\langle t\rangle^{-\frac{3}{2}} \|r^{1/2}u\|_{L^{\infty}(r\geq  t/4)}\|\langle t+r\rangle Dv\|_{L^{\infty}(\mathbb{R}^3)}
 \nonumber\\
&\leq C\langle t\rangle^{-\frac{3}{2}}E^{\frac{1}{2}}_3(u(t))E^{\frac{1}{2}}_3(v(t)).
\end{align}
Therefor, noting \eqref{zhuiyuddf} and \eqref{zhufffiyuddf}, we can get the estimate \eqref{xui89njkhhuio}.
\end{proof}
\begin{Lemma}\label{benshen}
Assume that $u$ and $v$ are smooth functions supported in $|x|\leq t+1$. Then we have
\begin{align}
\|u \langle t-r\rangle^{-1}v\|_{L^{\infty}(\mathbb{R}^3)}
\leq C\langle t\rangle^{-\frac{3}{2}}E_{3}^{\frac{1}{2}}(u(t))E_{3}^{\frac{1}{2}}(v(t)).
\end{align}
\end{Lemma}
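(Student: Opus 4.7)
The approach will follow the same two-region splitting as the proof of Lemma~\ref{daoshu}, with $Dv$ replaced by $\langle t-r\rangle^{-1}v$. The crucial new idea is to treat $w=\langle t-r\rangle^{-1}v$ as a \emph{single} function and apply the Klainerman--Sobolev inequality to it directly, rather than bounding $v$ and $\langle t-r\rangle^{-1}$ separately (which would lose a power of $\langle t\rangle$ when one tries to control $\|v\|_{\Gamma,2,2}$ by $E_3^{1/2}(v)$ via the support condition). The Hardy inequality of Lemma~\ref{rty7ffff78} is what converts $\|\langle t-r\rangle^{-1}\Gamma^b v\|_{L^2}$ into $\|\nabla\Gamma^b v\|_{L^2}\leq C E_{|b|+1}^{1/2}(v)$ and makes this work.

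I would proceed in four steps. First, by direct computation $\Omega_{ij}(t-r)=0$, $S(t-r)=t-r$, and $L_i(t-r)=x_i-t\omega_i$, from which one inductively deduces the pointwise bound $|\Gamma^c\langle t-r\rangle^{-1}|\leq C\langle t-r\rangle^{-1}$ for all multi-indices $c$ with $|c|\leq 2$; the Leibniz rule then gives $|\Gamma^a w|\leq C\langle t-r\rangle^{-1}\sum_{|b|\leq|a|}|\Gamma^b v|$. Second, since each $\Gamma^b v$ is supported in $|x|\leq t+1$, Lemma~\ref{rty7ffff78} yields $\|\langle t-r\rangle^{-1}\Gamma^b v\|_{L^2(\mathbb{R}^3)}\leq C\|\nabla\Gamma^b v\|_{L^2(\mathbb{R}^3)}\leq C E_{|b|+1}^{1/2}(v)$; summing over $|b|\leq 2$ yields $\|w\|_{\Gamma,2,2}\leq C E_3^{1/2}(v)$. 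Third, the Klainerman--Sobolev inequality \eqref{Sobo} with $n=3$, $k=2$ produces the pointwise estimate $\langle t+r\rangle\langle t-r\rangle^{1/2}|w(t,x)|\leq C E_3^{1/2}(v)$. Fourth, assuming $t\geq 2$ (the range $t\leq 2$ follows at once from the Sobolev inequality \eqref{xuyooo}), split $\mathbb{R}^3=\{r\leq t/4\}\cup\{r\geq t/4\}$: in the first region $\langle t+r\rangle\langle t-r\rangle^{1/2}\sim\langle t\rangle^{3/2}$ and $\|u\|_{L^\infty}\leq C E_3^{1/2}(u)$ by \eqref{xuyooo}; in the second region, the support of $v$ forces $r\sim\langle t+r\rangle\sim\langle t\rangle$, so Lemma~\ref{rt6788} gives $\|r^{1/2}u\|_{L^\infty(r\geq t/4)}\leq C E_3^{1/2}(u)$ and $r^{1/2}\langle t+r\rangle\langle t-r\rangle^{1/2}\geq c\langle t\rangle^{3/2}$. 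Both regions deliver the desired $\langle t\rangle^{-3/2}$ decay.

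The main obstacle is the first step, namely the pointwise bound $|\Gamma^c\langle t-r\rangle^{-1}|\leq C\langle t-r\rangle^{-1}$ for the Lorentz boosts $L_i$ and the scaling $S$. This is a short but essential verification: it says that $\langle t-r\rangle^{-1}$ behaves as an \emph{admissible weight} under the Klainerman vector fields, and it is exactly what allows the single-function Klainerman--Sobolev inequality to be applied to $\langle t-r\rangle^{-1}v$ and to pick up the Hardy-type gain of one $\langle t\rangle$ power that would otherwise be missing.
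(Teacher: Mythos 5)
Your overall strategy — apply the Klainerman--Sobolev inequality \eqref{Sobo} to the single object $w=\langle t-r\rangle^{-1}v$, control $\|w\|_{\Gamma,2,2}$ via a Hardy inequality, and then split $\mathbb{R}^3$ into $r\leq t/4$ and $r\geq t/4$ — is reasonable and genuinely different from the paper's, which instead localizes by a cutoff $\chi(r/t)$ and never applies $\Gamma$ to the weight $\langle t-r\rangle^{-1}$ at all. However, Step 1 of your argument contains a real error, and Step 2 inherits it.

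The claimed pointwise bound $|\Gamma^c\langle t-r\rangle^{-1}|\leq C\langle t-r\rangle^{-1}$ for $|c|\leq 2$ is false near the spatial origin. The first-order identities you list are correct, and in fact $L_i(t-r)=x_i-t\omega_i=-\omega_i(t-r)$, so at first order the weight is admissible. But the induction breaks at second order because $\omega_i=x_i/r$ is not smooth at $r=0$: one has $\partial_j\omega_i=(\delta_{ij}-\omega_i\omega_j)/r$ and $L_j\omega_i=t(\delta_{ij}-\omega_i\omega_j)/r$. Consequently, for instance,
\begin{align}
L_iL_j\langle t-r\rangle^{-1}
= g''(\sigma)(L_i\sigma)(L_j\sigma)+g'(\sigma)L_iL_j\sigma,
\qquad
L_iL_j\sigma = \omega_i\omega_j\,\sigma-\frac{t(\delta_{ij}-\omega_i\omega_j)}{r}\,\sigma,
\end{align}
with $\sigma=t-r$ and $g(s)=(1+s^2)^{-1/2}$; the second term carries a factor $t/r$ that is unbounded as $r\to 0$, and similarly $\partial_i\partial_j\langle t-r\rangle^{-1}$ carries a $1/r$. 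So the Leibniz expansion of $\Gamma^a w$ contains terms that are \emph{not} of the form $\langle t-r\rangle^{-1}\Gamma^b v$, and the quoted application of Lemma \ref{rty7ffff78} does not cover them. (They can in fact be controlled, but only by noting that in the region $r\leq t/4$ one has $\langle t-r\rangle^{-1}\lesssim\langle t\rangle^{-1}$, which converts the $t/r$ into $1/r$, and then invoking the \emph{classical} Hardy inequality $\|r^{-1}v\|_{L^2(\mathbb{R}^3)}\leq 2\|\nabla v\|_{L^2(\mathbb{R}^3)}$ — a tool the paper never cites and which is distinct from Lemma \ref{rty7ffff78}.) This is precisely what the paper's cutoff $\chi(r/t)$ is engineered to sidestep: $\chi(r/t)\equiv 1$ for $r\leq t/4$, so its $\Gamma$-derivatives are supported away from the origin where no $1/r$ singularity can arise, and the weight $\langle t-r\rangle^{-1}$ is traded for the harmless $\langle t\rangle^{-1}$ before any vector field is applied. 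As it stands, your Step 1 asserts a bound that is simply wrong, so there is a genuine gap; the repair is possible but requires acknowledging the $1/r$ and $t/r$ singular terms and invoking an additional Hardy-type estimate for them.
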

\begin{proof}
Without loss of generality, we can assume $t\geq 2$.
We have
\begin{align}\label{infwww899}
&\|u \langle t-r\rangle^{-1}v\|_{L^{\infty}(r\geq t/4)}\leq C\langle t\rangle^{-\frac{3}{2}}\| r^{\frac{1}{2}}u\|_{L^{\infty}(\mathbb{R}^3)}\|r \langle t-r\rangle^{-1}v\|_{L^{\infty}(\mathbb{R}^3)}.
\end{align}
Thanks to \eqref{Sobo2}, we can get
\begin{align}\label{infwww89999}
\| r^{\frac{1}{2}}u\|_{L^{\infty}(\mathbb{R}^3)}\leq C\sum_{|\alpha|\leq 1}\|\nabla \Omega^{\alpha}u\|_{L^2(\mathbb{R}^3)}\leq CE_{2}^{\frac{1}{2}}(u(t)).
\end{align}
In view of \eqref{Sobo23333} and Hardy inequality \eqref{Hardy} for $n=3$, we obtain
\begin{align}\label{infwww899999}
&\|r \langle t-r\rangle^{-1}v\|_{L^{\infty}(\mathbb{R}^3)}\nonumber\\
&\leq C\sum_{|\alpha|\leq 1}
\|\nabla (\langle t-r\rangle^{-1}\Omega^{\alpha}v)\|_{L^2(\mathbb{R}^3)}+C\sum_{|\alpha|\leq 2}\|\langle t-r\rangle^{-1} \Omega^{\alpha}v\|_{L^2(\mathbb{R}^3)}\nonumber\\
&\leq C\sum_{|\alpha|\leq 1}
\|\nabla \Omega^{\alpha}v\|_{L^2(\mathbb{R}^3)}+C\sum_{|\alpha|\leq 2}\|\langle t-r\rangle^{-1} \Omega^{\alpha}v\|_{L^2(\mathbb{R}^3)}\nonumber\\
&\leq C\sum_{|\alpha|\leq 2}\|\nabla \Omega^{\alpha}v\|_{L^2(\mathbb{R}^3)}\leq CE_{3}^{\frac{1}{2}}(v(t)).
\end{align}
The combination of \eqref{infwww899}, \eqref{infwww89999} and \eqref{infwww899999} gives
\begin{align}
\|u \langle t-r\rangle^{-1}v\|_{L^{\infty}(r\geq t/4)}
\leq C\langle t\rangle^{-\frac{3}{2}}E_{3}^{\frac{1}{2}}(u(t))E_{3}^{\frac{1}{2}}(v(t)).
\end{align}
The remaining task is to prove
\begin{align}\label{scc89jggg}
\|u \langle t-r\rangle^{-1}v\|_{L^{\infty}(r\leq t/4)}
\leq C\langle t\rangle^{-\frac{3}{2}}E_{3}^{\frac{1}{2}}(u(t))E_{3}^{\frac{1}{2}}(v(t)).
\end{align}
Take a smooth function $\chi$ satisfying
\begin{align}
\chi(\rho)=
\begin{cases}
1,~\rho\leq \frac{1}{4},\\
0,~\rho\geq \frac{1}{2}.
\end{cases}
\end{align}
Then by Sobolev inequality \eqref{xuyooo} and Klainerman-Sobolev inequality \eqref{dfft6889} for $n=3$, we have
\begin{align}\label{123456}
&\|u \langle t-r\rangle^{-1}v\|_{L^{\infty}(r\leq t/4)}\nonumber\\
&
\leq C\langle t\rangle^{-\frac{5}{2}}\|u\|_{L^{\infty}(\mathbb{R}^3)}\|\langle t+r\rangle\langle t-r\rangle^{\frac{1}{2}}\chi (r/t)v\|_{L^{\infty}(\mathbb{R}^3)}\nonumber\\
&\leq C\langle t\rangle^{-\frac{5}{2}}E_{2}^{\frac{1}{2}}(u(t))\|\chi (r/t)v\|_{\Gamma,2,2}.
\end{align}
Now we will prove
\begin{align}\label{1fffff6}
\|\chi (r/t)v\|_{\Gamma,2,2}\leq CtE_{3}^{\frac{1}{2}}(v(t)).
\end{align}
Note that
\begin{align}
\partial_{t}\big(\chi (r/t)\big)=-\frac{r}{t^2}\chi' (r/t),~\partial_{i}\big(\chi (r/t)\big)=\frac{\omega_i}{t}\chi' (r/t),
\end{align}
and
\begin{align}
\Omega_{ij}\big(\chi (r/t)\big)=0, ~S\big(\chi (r/t)\big)=0, ~L_i\big(\chi (r/t)\big)=\omega_i(1-\frac{r^2}{t^2}) \chi' (r/t).
\end{align}
We have
\begin{align}\label{hjidddddddi99}
\sum_{|b|=1}\|\Gamma^{b} \big(\chi (r/t)\big)\|_{L^2(\mathbb{R}^3)}\leq C.
\end{align}
Similarly, we also have
\begin{align}\label{hj89njhuo0}
\sum_{|b|=2}\|\Gamma^b \big(\chi (r/t)\big)\|_{L^2(\mathbb{R}^3)}\leq C.
\end{align}
Thus by \eqref{hjidddddddi99}, \eqref{hj89njhuo0} and Hardy inequality \eqref{Hardy} for $n=3$, we have
\begin{align}
&\|\chi (r/t)v\|_{\Gamma,2,2}\leq C\sum_{|b|+|c|\leq 2}\|\Gamma^{b}\big(\chi (r/t)\big)\Gamma^{c}v\|_{L^2(r\leq t/2)}\nonumber\\
&\leq C\sum_{|c|\leq 2}\|\Gamma^{c}v\|_{L^2(r\leq t/2)}\leq Ct\sum_{|c|\leq 2}\|\langle t-r\rangle^{-1}\Gamma^{c}v\|_{L^2(r\leq t/2)}\leq Ct E_{3}^{\frac{1}{2}}(v(t)).
\end{align}
The combination of \eqref{123456} and \eqref{1fffff6} gives \eqref{scc89jggg}.
\end{proof}
\begin{Lemma}\label{Hardynew2}
Assume that $u, v$ and $w$ are smooth functions supported in $|x|\leq t+1$. If the multi-indices $b,c,d$ satisfy $|b|+|c|+|d|\leq 6$,
we have
\begin{align}\label{hj8ddddddddcc899}
\|\Gamma^{b}uD\Gamma^{c}vD\Gamma^{d}w\|_{L^{2}(\mathbb{R}^3)}\leq C\langle t\rangle^{-\frac{3}{2}}E^{\frac{1}{2}}_7(u(t))E^{\frac{1}{2}}_7(v(t))E^{\frac{1}{2}}_7(w(t)).
\end{align}
\end{Lemma}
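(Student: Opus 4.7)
The plan is to split the argument into three cases according to the distribution of the six available vector fields among $b,c,d$: since $|b|+|c|+|d|\leq 6$, at most one of the three indices can exceed $4$, so the cases are (i) $|b|\leq 4$ and $|c|\leq 4$; (ii) $|b|\geq 5$ (forcing $|c|,|d|\leq 1$); (iii) $|c|\geq 5$ (forcing $|b|,|d|\leq 1$). In each case the strategy is to put one factor in $L^{2}$ (giving one factor of $E^{1/2}_{7}$) and estimate the product of the remaining two factors in $L^{\infty}$ with decay $\langle t\rangle^{-3/2}$ (or better).

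First I would dispose of case (i), which is the main case. Apply Lemma \ref{daoshu} with $u\to \Gamma^{b}u$ and $v\to \Gamma^{c}v$ to get
\begin{equation*}
\|\Gamma^{b}u\, D\Gamma^{c}v\|_{L^{\infty}(\mathbb{R}^{3})}\leq C\langle t\rangle^{-3/2} E^{1/2}_{3+|b|}(u(t)) E^{1/2}_{3+|c|}(v(t))\leq C\langle t\rangle^{-3/2} E^{1/2}_{7}(u(t)) E^{1/2}_{7}(v(t)),
\end{equation*}
and pair by H\"older's inequality with $\|D\Gamma^{d}w\|_{L^{2}}\leq E^{1/2}_{1+|d|}(w(t))\leq E^{1/2}_{7}(w(t))$, which is valid since $|d|\leq 6$. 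Case (iii) is symmetric: put $D\Gamma^{c}v$ into $L^{2}$, which contributes at most $E^{1/2}_{7}(v(t))$, and estimate the pair $\Gamma^{b}u\cdot D\Gamma^{d}w$ in $L^{\infty}$ by Lemma \ref{daoshu}; this applies since both $|b|$ and $|d|$ are at most $1$.

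The main obstacle is case (ii), where $\Gamma^{b}u$ carries many vector fields but no derivative, so Lemma \ref{daoshu} does not apply directly, and any naive $L^{2}$ bound on $\Gamma^{b}u$ itself would grow in $t$. Here I would insert the factor $\langle t-r\rangle^{-1}\cdot\langle t-r\rangle$ and use H\"older to write
\begin{equation*}
\|\Gamma^{b}u\, D\Gamma^{c}v\, D\Gamma^{d}w\|_{L^{2}}\leq \|\langle t-r\rangle^{-1}\Gamma^{b}u\|_{L^{2}}\cdot \|\langle t-r\rangle D\Gamma^{c}v\, D\Gamma^{d}w\|_{L^{\infty}}.
\end{equation*}
The first factor is bounded by $CE^{1/2}_{|b|+1}(u(t))\leq CE^{1/2}_{7}(u(t))$ via Lemma \ref{rty7ffff78} (the support assumption $|x|\leq t+1$ is used here). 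For the second factor, Klainerman--Sobolev (Lemma \ref{dfft6889}) gives the pointwise bounds $|D\Gamma^{c}v|\leq C\langle t+r\rangle^{-1}\langle t-r\rangle^{-1/2}E^{1/2}_{|c|+3}(v(t))$ and the analogous bound for $D\Gamma^{d}w$, so on the support
\begin{equation*}
|\langle t-r\rangle D\Gamma^{c}v\, D\Gamma^{d}w|\leq C\langle t+r\rangle^{-2} E^{1/2}_{4}(v(t)) E^{1/2}_{4}(w(t))\leq C\langle t\rangle^{-2} E^{1/2}_{7}(v(t)) E^{1/2}_{7}(w(t)).
\end{equation*}
Combining the two factors yields decay $\langle t\rangle^{-2}$, which is stronger than the required $\langle t\rangle^{-3/2}$, completing the proof. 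The key technical point is pairing the Hardy weight $\langle t-r\rangle^{-1}$ against the $\langle t-r\rangle^{-1/2}$ decay from Klainerman--Sobolev so as to convert all remaining decay into the uniform $\langle t+r\rangle$-weight available on the support.
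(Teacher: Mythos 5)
Your proof is correct and, while it leans on the same core toolkit (Lemma \ref{daoshu}, Hardy, Klainerman--Sobolev), it organizes the argument differently from the paper. The paper splits into the three cases $|b|+|c|\leq 3$, $|b|+|d|\leq 3$, $|c|+|d|\leq 3$, whereas you split according to whether $|b|,|c|\leq 4$, $|b|\geq 5$, or $|c|\geq 5$. Your split is the cleaner one: it is manifestly exhaustive, while the paper's three conditions actually miss the triple $(|b|,|c|,|d|)=(2,2,2)$ (for which all pairwise sums equal $4$). The underlying argument in the paper's first case does cover $(2,2,2)$ anyway, since Lemma \ref{daoshu} applied to $\Gamma^b u\, D\Gamma^c v$ only needs $|b|,|c|\leq 4$ to land in $E_7$, so this is a cosmetic oversight rather than a real gap, but your formulation avoids it. The second genuine difference is in the hard case where the undifferentiated factor $\Gamma^b u$ carries many vector fields: the paper pairs the Hardy weight with the bound $\langle t-r\rangle|D\Gamma^c v|\leq C|\Gamma^{c+1}v|$ from \eqref{gooddecay345} and then invokes Lemma \ref{daoshu} once more on the $L^\infty$ pair $\Gamma^{c+1}v\cdot D\Gamma^d w$, landing exactly at $\langle t\rangle^{-3/2}$; you instead apply Klainerman--Sobolev to both derivative factors, absorb the Hardy weight against the product of the two $\langle t-r\rangle^{-1/2}$ gains, and end with $\langle t+r\rangle^{-2}\lesssim\langle t\rangle^{-2}$, which is strictly stronger than needed. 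Your variant is a bit more self-contained (it does not reuse Lemma \ref{daoshu} in the hard case) at the cost of invoking the sharp weighted form of Klainerman--Sobolev explicitly; both are fine here.
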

\begin{proof}
If $|b|+|c|\leq 3$, it follows from Lemma \ref{daoshu} that
\begin{align}
&\|\Gamma^{b}uD\Gamma^{c}vD\Gamma^{d}w\|_{L^{2}(\mathbb{R}^3)}\nonumber\\
&\leq \|\Gamma^{b}uD\Gamma^{c}v\|_{L^{\infty}(\mathbb{R}^3)}\|D\Gamma^{d}w\|_{L^{2}(\mathbb{R}^3)}\nonumber\\
&\leq
\|\Gamma^{b}uD\Gamma^{c}v\|_{L^{\infty}(\mathbb{R}^3)}E^{\frac{1}{2}}_7(w(t))\nonumber\\
&\leq C\langle t\rangle^{-\frac{3}{2}}E^{\frac{1}{2}}_7(u(t))E^{\frac{1}{2}}_7(v(t))E^{\frac{1}{2}}_7(w(t)).
\end{align}
Using some similar procedure, if $|b|+|d|\leq 3$, we can also get \eqref{hj8ddddddddcc899}.
If $|c|+|d|\leq 3$, by Hardy inequality \eqref{Hardy} for $n=3$, \eqref{gooddecay345} and Lemma \ref{daoshu} , we have
\begin{align}
&\|\Gamma^{b}uD\Gamma^{c}vD\Gamma^{d}w\|_{L^{2}(\mathbb{R}^3)} \nonumber\\
&\leq \|\langle t-r\rangle^{-1}\Gamma^{b}u\|_{L^2(\mathbb{R}^3)} \|\langle t-r\rangle D\Gamma^{c}vD\Gamma^{d}w\|_{L^{\infty}(\mathbb{R}^3)} \nonumber\\
& \leq C\| \Gamma^{c+1}vD\Gamma^{d}w\|_{L^{\infty}(\mathbb{R}^3)}E^{\frac{1}{2}}_7(u(t)) \nonumber\\
&\leq C\langle t\rangle^{-\frac{3}{2}}E^{\frac{1}{2}}_7(u(t))E^{\frac{1}{2}}_7(v(t))E^{\frac{1}{2}}_7(w(t)).
\end{align}
\end{proof}
%By Klainerman-Sobolev inequality \eqref{Sobo}, it is easy to get
%\begin{Lemma}\label{Hardybbnew662}
%Assume that $u, v$ and $w$ are smooth functions with sufficient decay at infinity. For any multi-indices $b,c,d$ satisfying $|b|+|c|+|d|\leq 6, |d|\leq 5$,
%we have
%\begin{align}
%\|D\Gamma^{b}uD\Gamma^{c}vD^2\Gamma^{d}w\|_{L^{2}(\mathbb{R}^3)}\leq C\langle t\rangle^{-\frac{3}{2}}E^{\frac{1}{2}}_7(u(t))E^{\frac{1}{2}}_7(v(t))E^{\frac{1}{2}}_7(w(t)).
%\end{align}
%\end{Lemma}

\begin{Lemma}\label{Hardynew2888}
Assume that $u, v$ and $w$ are smooth functions supported in $|x|\leq t+1$. If the multi-indices $b,c,d$ satisfy $|b|+|c|+|d|\leq 6, |d|\leq 5$,
we have
\begin{align}
\|\Gamma^{b}u \Gamma^{c}vD^2\Gamma^{d}w\|_{L^{2}(\mathbb{R}^3)}\leq C\langle t\rangle^{-\frac{3}{2}}E^{\frac{1}{2}}_7(u(t))E^{\frac{1}{2}}_7(v(t))E^{\frac{1}{2}}_7(w(t)).
\end{align}
\end{Lemma}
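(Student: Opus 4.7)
The plan is to parallel the proof of Lemma \ref{Hardynew2}, with the new ingredient being to trade the extra derivative in $D^2\Gamma^d w$ for a weight $\langle t-r\rangle^{-1}$. Writing $D^2\Gamma^d w = D(D\Gamma^d w)$ and applying \eqref{gooddecay345} together with the commutator relation \eqref{shiyi}, one obtains the pointwise bound
\[
|D^2\Gamma^d w|\leq C\langle t-r\rangle^{-1}\sum_{|e|\leq |d|+1}|D\Gamma^e w|.
\]
The hypothesis $|d|\leq 5$ is used precisely here: it guarantees $|e|\leq 6$, so each $D\Gamma^e w$ appearing is of differentiation order at most $7$ and hence is controlled in $L^2$ by $CE_7^{1/2}(w)$. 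Thus it suffices to bound $\|\Gamma^b u\,\Gamma^c v\,\langle t-r\rangle^{-1} D\Gamma^e w\|_{L^2}$ for each such $e$.

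I would then split into three overlapping cases according to which pair-sum among $|b|+|c|$, $|b|+|d|$, $|c|+|d|$ is the smallest. In the case $|b|+|c|\leq 4$, I would place $\Gamma^b u\cdot\langle t-r\rangle^{-1}\Gamma^c v$ in $L^\infty$ using Lemma \ref{benshen} applied with $u\mapsto\Gamma^b u$ and $v\mapsto\Gamma^c v$, giving
\[
\|\Gamma^b u\,\langle t-r\rangle^{-1}\Gamma^c v\|_{L^\infty}\leq C\langle t\rangle^{-3/2}E_{3+|b|}^{1/2}(u)E_{3+|c|}^{1/2}(v)\leq C\langle t\rangle^{-3/2}E_7^{1/2}(u)E_7^{1/2}(v),
\]
and then multiply by $\|D\Gamma^e w\|_{L^2}\leq CE_7^{1/2}(w)$.

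In the case $|b|+|d|\leq 3$, I would place $\Gamma^b u\cdot D\Gamma^e w$ in $L^\infty$ via Lemma \ref{daoshu}; this is legitimate because $|e|\leq |d|+1\leq 4$ and $|b|\leq 3$, so the required third-order energies are bounded by $E_7$. The remaining factor $\langle t-r\rangle^{-1}\Gamma^c v$ is put in $L^2$ via the Hardy inequality \eqref{Hardy}, yielding $\|\langle t-r\rangle^{-1}\Gamma^c v\|_{L^2}\leq C\|\nabla\Gamma^c v\|_{L^2}\leq CE_7^{1/2}(v)$ (since $|c|\leq 6$). The case $|c|+|d|\leq 3$ is handled symmetrically by interchanging the roles of $u$ and $v$.

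The only subtle point is the bookkeeping verification that these three cases exhaust the hypothesis $|b|+|c|+|d|\leq 6$: if all three thresholds failed, then $|b|+|c|\geq 5$, $|b|+|d|\geq 4$, and $|c|+|d|\geq 4$, whose sum forces $2(|b|+|c|+|d|)\geq 13$, contradicting $|b|+|c|+|d|\leq 6$. The individual estimates themselves are straightforward applications of Lemmas \ref{daoshu}, \ref{benshen}, and the Hardy inequality \eqref{Hardy}, so I expect no essential difficulty beyond tracking the derivative-counts and confirming that each factor falls within the $E_7^{1/2}$ range.
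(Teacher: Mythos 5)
Your proof is correct and follows essentially the same route as the paper's: split on which pair-sum among $|b|+|c|$, $|b|+|d|$, $|c|+|d|$ is small, trade the extra derivative in $D^{2}\Gamma^{d}w$ for a $\langle t-r\rangle^{-1}$ weight via \eqref{gooddecay345}, and then invoke Lemma~\ref{daoshu}, Lemma~\ref{benshen}, and the Hardy inequality \eqref{Hardy} exactly as the paper does. You are in fact slightly more careful: the paper uses threshold $\leq 3$ for all three pair-sums, which does not cover, e.g., $|b|=|c|=|d|=2$; raising the $|b|+|c|$ threshold to $4$ (permissible since Lemma~\ref{benshen} only needs $|b|,|c|\leq 4$, and $|d|\leq 5$ keeps $D\Gamma^{e}w$ inside $E_{7}$) together with your explicit double-counting check closes that gap.
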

\begin{proof}
If $|b|+|d|\leq 3$, it follows from Hardy inequality \eqref{Hardy} for $n=3$, \eqref{gooddecay345} and Lemma \ref{daoshu} that
\begin{align}\label{rfg6888}
&\|\Gamma^{b}u \Gamma^{c}vD^2\Gamma^{d}w\|_{L^{2}(\mathbb{R}^3)}\nonumber\\
&\leq \|\Gamma^{b}u \langle t-r\rangle D^2\Gamma^{d}w\|_{L^{\infty}(\mathbb{R}^3)}\|\langle t-r\rangle^{-1}\Gamma^{c}v\|_{L^{2}(\mathbb{R}^3)} \nonumber\\
&\leq \|\Gamma^{b}u  D\Gamma^{d+1}w\|_{L^{2}(\mathbb{R}^3)} E_{7}^{\frac{1}{2}}(v(t))\nonumber\\
& C\langle t\rangle^{-\frac{3}{2}}E^{\frac{1}{2}}_7(u(t))E^{\frac{1}{2}}_7(v(t))E^{\frac{1}{2}}_7(w(t)).
\end{align}
Using similar procedure, we can also treat the case $|c|+|d|\leq 3$. If $|b|+|c|\leq 3$, by \eqref{gooddecay345} and Lemma \ref{benshen}, we have
\begin{align}\label{rfg6889998}
&\|\Gamma^{b}u \Gamma^{c}vD^2\Gamma^{d}w\|_{L^{2}(\mathbb{R}^3)}\nonumber\\
&\leq \|\Gamma^{b}u \langle t-r\rangle^{-1}\Gamma^{c}v\|_{L^{\infty}(\mathbb{R}^3)}\|\langle t-r\rangle D^2\Gamma^{d}w\|_{L^{2}(\mathbb{R}^3)} \nonumber\\
&\leq C\|\Gamma^{b}u \langle t-r\rangle^{-1}\Gamma^{c}v\|_{L^{\infty}(\mathbb{R}^3)} E_{7}^{\frac{1}{2}}(w(t))\nonumber\\
& \leq C\langle t\rangle^{-\frac{3}{2}}E^{\frac{1}{2}}_7(u(t))E^{\frac{1}{2}}_7(v(t))E^{\frac{1}{2}}_7(w(t)).
\end{align}
\end{proof}
We also have the estimate of composite functions in Li and Zhou \cite{MR3729480} as follows.
\begin{Lemma}\label{composite}
Suppose that $H=H(w)$ is a sufficiently smooth function of $w$ with
\begin{align}
H(w)=\mathscr{O}(|w|^{1+\beta}),
\end{align}
where $\beta\geq 0$ is an integer.
For any given multi-index $a$, if a function $w=w(t,x)$ satisfies
\begin{align}\label{fty}
\|w(t,\cdot)\|_{\Gamma,[\frac{|a|}{2}],\infty}\leq \nu_0,
\end{align}
where $\nu_0$ is a positive constant, then we have the following pointwise estimate
\begin{align}\label{point333}
 |\Gamma^{a}H(w(t,x))|\leq C(\nu_0)\sum_{|l_0|+\cdots+|l_{\beta}|\leq |a|}
 \prod_{j=0}^{\beta}|\Gamma^{l_{j}}w(t,x)|.
 \end{align}
 and $C(\nu_0)$ is a positive constant only depending on $\nu_0$.
\end{Lemma}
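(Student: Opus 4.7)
\textbf{Proof proposal for Lemma \ref{composite}.} The plan is a straightforward generalization of the one-variable Faà di Bruno formula adapted to the first-order vector fields $\Gamma_1,\dots,\Gamma_N$, combined with a pigeonhole argument dictated by the assumption \eqref{fty}. First I would verify by induction on $|a|$ that, since each $\Gamma_i$ acts as a first-order derivation, one has a pointwise expansion of the form
\begin{equation*}
\Gamma^{a}H(w)=\sum_{k=1}^{|a|}\sum_{\substack{|l_1|+\cdots+|l_k|=|a|\\ |l_j|\geq 1}}C_{k,l_1,\dots,l_k}\,H^{(k)}(w)\,\prod_{j=1}^{k}\Gamma^{l_j}w,
\end{equation*}
where the $C_{k,l_1,\dots,l_k}$ are harmless combinatorial constants. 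The base case $|a|=1$ is just $\Gamma H(w)=H'(w)\Gamma w$, and the inductive step follows by applying one more $\Gamma$ to such a term and using the Leibniz rule.

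Next I would exploit the vanishing of $H$ at $w=0$. Since $H(w)=\mathscr{O}(|w|^{1+\beta})$, for $1\leq k\leq 1+\beta$ we have $|H^{(k)}(w)|\leq C(\nu_0)|w|^{1+\beta-k}$ on $|w|\leq\nu_0$, while for $k\geq 2+\beta$ we simply have $|H^{(k)}(w)|\leq C(\nu_0)$. For $k\leq 1+\beta$ this immediately converts the Faà di Bruno term into
\begin{equation*}
|H^{(k)}(w)|\prod_{j=1}^{k}|\Gamma^{l_j}w|\leq C(\nu_0)\,|w|^{\,1+\beta-k}\prod_{j=1}^{k}|\Gamma^{l_j}w|,
\end{equation*}
which is precisely a product of $\beta+1$ factors $|\Gamma^{l_j'}w|$ (with $\beta+1-k$ of the $l_j'$ equal to the zero multi-index), and the constraint $|l_1'|+\cdots+|l_{\beta}'|=|a|$ is preserved.

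The only nontrivial case is $k\geq \beta+2$, where the Faà di Bruno product contains strictly more than $\beta+1$ factors and $H^{(k)}$ no longer supplies an extra power of $|w|$. Here I would use the elementary pigeonhole observation that, because $|l_1|+\cdots+|l_k|=|a|$, at most one index can satisfy $|l_j|>[|a|/2]$; hence at least $k-1\geq \beta+1$ of the factors $|\Gamma^{l_j}w|$ meet $|l_j|\leq [|a|/2]$ and can therefore be bounded by $\nu_0$ using the hypothesis \eqref{fty}. Absorbing exactly $k-(\beta+1)$ such factors into the constant $C(\nu_0)$ leaves the desired product of $\beta+1$ factors $|\Gamma^{l_j}w|$ with $|l_0|+\cdots+|l_\beta|\leq |a|$, and summing over all terms in the Faà di Bruno expansion yields \eqref{point333}.

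The main obstacle is purely bookkeeping: verifying the Faà di Bruno-type expansion for the noncommuting vector fields $\Gamma_i$ and then correctly matching factors when $k$ exceeds $\beta+1$ so that the resulting bound has exactly the right number of $\Gamma^{l_j}w$ factors. No analytic input beyond the smoothness of $H$, the vanishing hypothesis $H=\mathscr{O}(|w|^{1+\beta})$, and the $L^\infty$ control \eqref{fty} is required.
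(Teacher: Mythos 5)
The paper does not prove this lemma itself; it cites it from Li and Zhou's monograph. Your proof is correct and follows the standard route for such composite-function estimates: a Faà di Bruno expansion in the first-order vector fields, Taylor vanishing of $H^{(k)}$ for $k\leq 1+\beta$ to supply the missing $|w|$-factors, and a pigeonhole argument for $k\geq\beta+2$ using that at most one $|l_j|$ can exceed $[|a|/2]$ so the excess factors are uniformly bounded by $\nu_0$ via \eqref{fty}. The only point you gloss over, which is worth at least a sentence, is that the noncommuting $\Gamma$'s produce iterated products that are not a priori of the ordered form $\Gamma^{l}$; one should invoke the closedness of the family $\Gamma$ under commutators (the Lie algebra structure underlying Lemma \ref{LEM2134}) to rewrite these as combinations of $\Gamma^{l}$ with $|l|$ no larger, after which your bookkeeping goes through unchanged.
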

%\begin{Lemma}\label{composite}
%Suppose that $H=H(w)$ is a sufficiently smooth function of $w$ with
%\begin{align}
%H(w)=\mathscr{O}(|w|^{1+\beta}),
%\end{align}
%where $\beta\geq 0$ is an integer.
%For any given integer $k\geq 0$, if a function $w=w(t,x)$ satisfies
%\begin{align}\label{fty}
%\|w(t,\cdot)\|_{\Gamma,[\frac{k}{2}],\infty}\leq \nu_0,
%\end{align}
%where $\nu_0$ is a positive constant, then, when $\beta=0$, we have
%\begin{align}\label{lem2800}
%\|H(w(t,\cdot))\|_{\Gamma,k,p}\leq C(\nu_0)\|w(t,\cdot)\|_{\Gamma,k,p},
%\end{align}
%and when $\beta\geq 1$, we have
%\begin{align}\label{lem280011}
%\|H(w(t,\cdot))\|_{\Gamma,k,p}\leq C(\nu_0)\prod_{i=1}^{\beta}\|w(t,\cdot)\|_{\Gamma,[\frac{k}{2}],p_i}\|w(t,\cdot)\|_{\Gamma,k,p_0},
%\end{align}
%where $1\leq p,p_i\leq \infty~(i=0,1,\cdots,\beta)$,
%\begin{align}
%\frac{1}{p}=\sum_{i=0}^{\beta}\frac{1}{p_i},
%\end{align}
% and $C(\nu_0)$ is a constant depending on $\nu_0$.
%\end{Lemma}
%\begin{proof}
%We have the following pointwise estimate (see \cite{MR3729480}):
% \begin{align}\label{point333}
% |\Gamma^{a}G(w(t,x))|\leq C(\nu_0)\sum_{|l_0|+\cdots+|l_{\beta}|\leq |a|}
% \prod_{j=0}^{\beta}|\Gamma^{l_{j}}w(t,x)|.
% \end{align}
% \eqref{lem2800} is a direct consequence of \eqref{point333}, while, \eqref{lem280011} follows from \eqref{point333} and H\"{o}lder inequality.
%\end{proof}
\section{Proof of Theorem \ref{mainthm}}
In this section, we shall prove Theorem \ref{mainthm}, i.e., the global nonlinear stability theorem of geodesic solutions for evolutionary Faddeev model when $n=3$,  by some bootstrap argument. Assume that $(u,v)$ is a local classical solution to the Cauchy problem \eqref{syst3}--\eqref{xuyaoop} on $[0, T]$. We will prove that there exist positive constants $A$ and $\varepsilon_0$ such that
\begin{align}
\sup_{0\leq t\leq T}\big(E_{7}^{\frac{1}{2}}(u(t))+E_{7}^{\frac{1}{2}}(v(t))\big)\leq A\varepsilon
\end{align}
under the assumption
\begin{align}
\sup_{0\leq t\leq T}\big(E_{7}^{\frac{1}{2}}(u(t))+E_{7}^{\frac{1}{2}}(v(t))\big)\leq 2A\varepsilon,
\end{align}
where $0<\varepsilon\leq \varepsilon_0$.
\subsection{Energy estimates}
First we will give the estimates on energies $E_{7}(u(t))$ and $E_{7}(v(t))$. For this purpose, it is necessary to introduce some notations about the nonlinear terms on the right hand side of \eqref{syst3}, which will be also used when $n=2$. Denote
\begin{align}\label{333333000}
&F(u+\Theta, D(u+\Theta), Dv, D^2(u+\Theta), D^2v)\nonumber\\
&=a_{\mu\nu}(u+\Theta, Dv)\partial_{\mu}\partial_{\nu}(u+\Theta)+b_{\mu\nu}(u+\Theta, D(u+\Theta), Dv)\partial_{\mu}\partial_{\nu}v\nonumber\\
 &~~+F_1(u+\Theta, D(u+\Theta), Dv),
\end{align}
where
\begin{align}\label{PF00000}
&a_{\mu\nu}(u+\Theta, Dv)\partial_{\mu}\partial_{\nu}(u+\Theta)+b_{\mu\nu}(u+\Theta, D(u+\Theta), Dv)\partial_{\mu}\partial_{\nu}v\nonumber\\
&=
-\frac{1}{2}\cos^2(u+\Theta) Q_{\mu\nu}
\big(v,Q^{\mu\nu}(u+\Theta,v)\big)
\end{align}
and
\begin{align}\label{PF11111}
&F_1(u+\Theta, D(u+\Theta), Dv)\nonumber\\
&=-\frac{1}{2}\sin(2(u+\Theta))Q(v,v)-\frac{1}{4}\sin(2(u+\Theta))Q_{\mu\nu}(u+\Theta,v)Q^{\mu\nu}(u+\Theta,v).
\end{align}
We also denote
\begin{align}
&G(u+\Theta, D(u+\Theta), Dv, D^2(u+\Theta), D^2v)\nonumber\\
&=c_{\mu\nu}(u+\Theta, Dv)\partial_{\mu}\partial_{\nu}(u+\Theta)+d_{\mu\nu}(u+\Theta, D(u+\Theta), Dv)\partial_{\mu}\partial_{\nu}v\nonumber\\
 &~~+G_1(u+\Theta, D(u+\Theta), Dv),
\end{align}
where
\begin{align}\label{PF22222}
&c_{\mu\nu}(u+\Theta,D(u+\Theta), Dv)\partial_{\mu}\partial_{\nu}(u+\Theta)+d_{\mu\nu}(u+\Theta, D(u+\Theta), Dv)\partial_{\mu}\partial_{\nu}v\nonumber\\
&=
\sin^2(u+\Theta) \Box v+\frac{1}{2}\cos^2(u+\Theta) Q_{\mu\nu}
\big(u+\Theta,Q^{\mu\nu}(u+\Theta,v)\big)
\end{align}
and
\begin{align}\label{PF1dddd1111}
G_1(u+\Theta, D(u+\Theta), Dv)
=\sin(2(u+\Theta))Q(u+\Theta,v).
\end{align}

\par
 For any multi-index $a, |a|\leq 6$, taking $\Gamma^{a}$ on the equation \eqref{syst3} and noting Lemma \ref{LEM2134}, we have
\begin{align}\label{rule10}
\Box \Gamma^{a}u&=-\frac{1}{2}\cos^2(u+\Theta) Q_{\mu\nu}
\big(v,Q^{\mu\nu}(\Gamma^au+\Gamma^a\Theta,v)\big)\nonumber\\
&~~~-\frac{1}{2}\cos^2(u+\Theta) Q_{\mu\nu}
\big(v,Q^{\mu\nu}(u+\Theta,\Gamma^av)\big)+f_{a}
\end{align}
and
\begin{align}\label{rule20}
\Box \Gamma^{a}v&=\sin^2(u+\Theta) \Box \Gamma^{a}v+\frac{1}{2}\cos^2(u+\Theta) Q_{\mu\nu}
\big(u+\Theta,Q^{\mu\nu}(\Gamma^{a}u+\Gamma^{a}\Theta,v)\big)\nonumber\\
&~~~+\frac{1}{2}\cos^2(u+\Theta) Q_{\mu\nu}
\big(u+\Theta,Q^{\mu\nu}(u+\Theta,\Gamma^{a}v)\big)+g_{a},
\end{align}
where
\begin{align}\label{faaaa}
&f_a=\big[\Gamma^{a}, a_{\mu\nu}(u+\Theta, Dv)\partial_{\mu}\partial_{\nu}\big](u+\Theta)+\big[\Gamma^a,b_{\mu\nu}(u+\Theta, D(u+\Theta), Dv)\partial_{\mu}\partial_{\nu}\big]v\nonumber\\
&~~~~+\Gamma^{a}F_1(u+\Theta, D(u+\Theta), Dv)+[\Box, \Gamma^{a}]u
\end{align}
and
\begin{align}\label{fdddda}
&g_a=\big[\Gamma^{a}, c_{\mu\nu}(u+\Theta,D(u+\Theta), Dv)\partial_{\mu}\partial_{\nu}\big](u+\Theta)+\big[\Gamma^a,d_{\mu\nu}(u+\Theta, D(u+\Theta), Dv)\partial_{\mu}\partial_{\nu}\big]v\nonumber\\
&~~~~+\Gamma^{a}G_1(u+\Theta, D(u+\Theta), Dv)+[\Box, \Gamma^{a}]v.
\end{align}
 \par
By Leibniz's rule, we have
\begin{align}\label{rule30}
 \langle \partial_t \Gamma^{a}u, \Box \Gamma^{a}u\rangle+ \langle \partial_t \Gamma^{a}v, \Box \Gamma^{a}v\rangle=\partial_te_0+\nabla\cdot q_0,
\end{align}
where
\begin{align}
e_0=\frac{1}{2}\big(|D \Gamma^{a}u|^2+|D \Gamma^{a}v|^2\big).
\end{align}
Leibniz's rule also gives
\begin{align}\label{rule40}
&\langle \partial_t \Gamma^{a}u,  -\frac{1}{2}\cos^2(u+\Theta) Q_{\mu\nu}
\big(v,Q^{\mu\nu}(\Gamma^au+\Gamma^a\Theta,v)\big)\rangle\nonumber\\
&+ \langle \partial_t \Gamma^{a}u,  -\frac{1}{2}\cos^2(u+\Theta) Q_{\mu\nu}
\big(v,Q^{\mu\nu}(u+\Theta,\Gamma^av)\big)\rangle\nonumber\\
&+ \langle \partial_t \Gamma^{a}v, \sin^2(u+\Theta) \Box \Gamma^{a}v\rangle\nonumber\\
&+ \langle \partial_t \Gamma^{a}v,  \frac{1}{2}\cos^2(u+\Theta) Q_{\mu\nu}
\big(u+\Theta,Q^{\mu\nu}(\Gamma^au+\Gamma^a\Theta,v)\big)\rangle\nonumber\\
&+ \langle \partial_t \Gamma^{a}v,  \frac{1}{2}\cos^2(u+\Theta) Q_{\mu\nu}
\big(u+\Theta,Q^{\mu\nu}(u+\Theta,\Gamma^av)\big)\rangle\nonumber\\
&=\partial_t\widetilde{e}+\nabla\cdot \widetilde{q}+\widetilde{p},
\end{align}
where
 \begin{align}
\widetilde{ e}=\widetilde{e}_0+e_1
 \end{align}
 with
 \begin{align}
\widetilde{e}_0&=\frac{1}{2} \sin^2(u+\Theta)|D \Gamma^{a}v|^2+
\cos^2(u+\Theta)\partial_t \Gamma^{a}v\partial_{\mu}\Theta Q^{\mu0}(\Theta,\Gamma^av)\nonumber\\
&-\frac{1}{4}\cos^2(u+\Theta)Q_{\mu\nu}(\Theta,\Gamma^{a}v)Q^{\mu\nu}(\Theta,\Gamma^av)
\end{align}
and
\begin{align}\label{89jjyu6532}
e_1&=-\cos^2(u+\Theta)\partial_t \Gamma^{a}u\partial_{\mu}v\big(Q^{\mu0}(\Gamma^au,v)+Q^{\mu0}(u+\Theta,\Gamma^av)\big)\nonumber\\
&+\cos^2(u+\Theta)\partial_t \Gamma^{a}v\partial_{\mu}(u+\Theta)\big(Q^{\mu0}(\Gamma^au,v)+Q^{\mu0}(u,\Gamma^av)\big)\nonumber\\
&+\cos^2(u+\Theta)\partial_t \Gamma^{a}v\partial_{\mu}uQ^{\mu0}(\Theta,\Gamma^av)\nonumber\\
&+\frac{1}{4}\cos^2(u+\Theta)Q_{\mu\nu}(v,\Gamma^au)\big(Q^{\mu\nu}(\Gamma^au,v)+2Q^{\mu\nu}(u+\Theta,\Gamma^av)\big)\nonumber\\
&-\frac{1}{4}\cos^2(u+\Theta)Q_{\mu\nu}(u+2\Theta,\Gamma^{a}v)Q^{\mu\nu}(u,\Gamma^av),%\nonumber\\
%&-\frac{1}{2}e^{-q(\sigma)}\cos^2(u+\Theta)Q_{\mu\nu}(\Theta,\Gamma^{a}v)Q^{\mu\nu}(u,\Gamma^av).
\end{align}
and
\begin{align}\label{with00000}
\widetilde{p}=
&-\frac{1}{2}\sin(2(u+\Theta))\big(\partial_t \Gamma^{a}vQ(u+\Theta,\Gamma^{a}v)+\partial_i \Gamma^{a}vQ_{0i}(u+\Theta,\Gamma^{a}v)\big)\nonumber\\
&+\frac{1}{2}\cos^2(u+\Theta)\partial_t \Gamma^{a}v Q_{\mu\nu}
\big(u+\Theta,Q^{\mu\nu}(\Gamma^a\Theta,v)\big)\nonumber\\
&+\frac{1}{2}\cos^2(u+\Theta)Q_{\mu\nu}(\partial_tu+\partial_t\Theta,\Gamma^{a}v)Q^{\mu\nu}(u+\Theta,\Gamma^av)\nonumber\\
&-\frac{1}{4}\sin(2(u+\Theta))Q_{\mu\nu}(u+\Theta,\Gamma^{a}v)Q^{\mu\nu}(u+\Theta,\Gamma^av)\nonumber\\
&
-\frac{1}{2}\cos^2(u+\Theta)Q_{\mu\nu}(\partial_tv,\Gamma^au)Q^{\mu\nu}(\Gamma^au,v)\nonumber\\
&
-\frac{1}{2}\cos^2(u+\Theta)\partial_t \Gamma^{a}uQ_{\mu\nu}
\big(v,Q^{\mu\nu}(\Gamma^a\Theta,v)\big)\nonumber\\
&-\frac{1}{2}\sin(2(u+\Theta))\partial_t \Gamma^{a}uQ_{\mu\nu}(v,u+\Theta)Q^{\mu\nu}(\Gamma^au,v)\nonumber\\
&+\frac{1}{4}\sin(2(u+\Theta))(\partial_tu+\partial_t\Theta)Q_{\mu\nu}(v,\Gamma^au)Q^{\mu\nu}(\Gamma^au,v)\nonumber\\
&-\frac{1}{2}\cos^2(u+\Theta)Q_{\mu\nu}(\partial_tv, \Gamma^au)Q^{\mu\nu}(u+\Theta,\Gamma^av)\nonumber\\
&-\frac{1}{2}\cos^2(u+\Theta)Q_{\mu\nu}(v, \Gamma^au)Q^{\mu\nu}(\partial_tu+\partial_t\Theta,\Gamma^av)\nonumber\\
&-\frac{1}{2}\sin(2(u+\Theta))\partial_t \Gamma^{a}uQ_{\mu\nu}(v,u+\Theta)Q^{\mu\nu}(u+\Theta,\Gamma^av)\nonumber\\
&+\frac{1}{2}\sin(2(u+\Theta))(\partial_tu+\partial_t\Theta)Q_{\mu\nu}(v,\Gamma^{a}u)Q^{\mu\nu}(u+\Theta,\Gamma^av).
\end{align}
By \eqref{rule10}, \eqref{rule20}, \eqref{rule30}, \eqref{rule40} and the divergence theorem, we can get
\begin{align}\label{rule50}
&\frac{d}{dt}\int_{\mathbb{R}^3}\big(e_0(t,x)-\widetilde{ e}(t,x)\big) dx\nonumber\\
&\leq \int_{\mathbb{R}^3}|\widetilde{p}(t,x)| dx+\int_{\mathbb{R}^3}|\langle \partial_t \Gamma^{a}u, f_a\rangle| dx+\int_{\mathbb{R}^3}|\langle \partial_t \Gamma^{a}v, g_a\rangle| dx.
\end{align}
Noting
\begin{align}
\widetilde{e}_0
&=\frac{1}{2} \sin^2(u+\Theta)|D \Gamma^{a}v|^2\nonumber\\
&+\frac{1}{2}\cos^2(u+\Theta)\big(2\partial_t \Gamma^{a}v\partial_{\mu}\Theta Q^{\mu0}(\Theta,\Gamma^av)-\frac{1}{2}Q_{\mu\nu}(\Theta,\Gamma^{a}v)Q^{\mu\nu}(\Theta,\Gamma^av)\big)\nonumber\\
&=\frac{1}{2} \sin^2(u+\Theta)|D \Gamma^{a}v|^2\nonumber\\
&+\frac{1}{2}\cos^2(u+\Theta)\big(|\partial_t \Theta|^2|\nabla \Gamma^{a}v|^2-|\nabla \Theta|^2|\partial_t \Gamma^{a}v|^2-\frac{1}{2}Q_{ij}(\Theta,\Gamma^{a}v)Q_{ij}(\Theta,\Gamma^{a}v)\big)\nonumber\\
&=\frac{1}{2} \sin^2(u+\Theta)|D \Gamma^{a}v|^2+\frac{1}{2}\cos^2(u+\Theta)\big(Q(\Theta,\Theta)|\nabla \Gamma^{a}v|^2+(\nabla \Theta\cdot \nabla \Gamma^{a}v)^2\big),
\end{align}
we have
\begin{align}\label{89iytr9900}
&e_0-\widetilde{e}_0\nonumber\\
&=\frac{1}{2}|D \Gamma^{a}u|^2+\frac{1}{2}\cos^2(u+\Theta)|\partial_t \Gamma^{a}v|^2\nonumber\\
&+\frac{1}{2}\cos^2(u+\Theta)\big((1-Q(\Theta,\Theta))|\nabla \Gamma^{a}v|^2-(\nabla \Theta\cdot \nabla \Gamma^{a}v)^2\big)\nonumber\\
&\geq \frac{1}{2}|D \Gamma^{a}u|^2+\frac{1}{2}\cos^2(u+\Theta)|\partial_t \Gamma^{a}v|^2+\frac{1}{2}\cos^2(u+\Theta)(1-|\partial_t\Theta|^2)|\nabla \Gamma^{a}v|^2.
\end{align}
In view of \eqref{89iytr9900} and \eqref{89jjyu6532},
it follows from Remark \ref{rem45888} and the smallness of $|u|, |Du|$ and $|Dv|$ that there exists a positive constant $c_1=c_1(\lambda_0,\lambda_1)$ such that
\begin{align}\label{rtt5666666699kkk8}
c_1^{-1}e_0\leq e_0-\widetilde{e}=e_0-\widetilde{e}_0-e_1 \leq c_1e_0.
\end{align}
\par
Now we estimate all the terms on the right hand side of \eqref{rule50}. In view of \eqref{with00000},
we have
\begin{align}\label{bianjie}
\int_{\mathbb{R}^3}|\widetilde{p}(t,x)| dx
&\leq C\|( |u|+|\Theta|)(|Du|+|D\Theta|)|D\Gamma^{a}v|^2 \|_{L^1(\mathbb{R}^3)}\nonumber\\
&+C\|\big( |Du|+|D\Theta|\big)\big( |D^2u|+|D^2\Theta|\big)|D\Gamma^{a}v|^2 \|_{L^1(\mathbb{R}^3)}\nonumber\\
&+C \|( |Du|+|D\Theta|\big)DvD^2\Gamma^{a}\Theta D\Gamma^{a}v \|_{L^1(\mathbb{R}^3)}\nonumber\\
&+C \|( |Du|+|D\Theta|\big)D^2vD\Gamma^{a}\Theta D\Gamma^{a}v  \|_{L^1(\mathbb{R}^3)}\nonumber\\
&+C \| DvDvD^2\Gamma^{a}\Theta D\Gamma^{a}u \|_{L^1(\mathbb{R}^3)}+C \| DvD^2vD\Gamma^{a}\Theta D\Gamma^{a}u \|_{L^1(\mathbb{R}^3)}\nonumber\\
&+C\|Dv(|D\Theta|+|Dv|+|D^2v|)|D\Gamma^{a}u|^2\|_{L^1(\mathbb{R}^3)}\nonumber\\
&+C\|Dv(|D\Theta|+|Du|)D\Gamma^{a}uD\Gamma^{a}v\|_{L^1(\mathbb{R}^3)}\nonumber\\
&+C\|Dv(|D^2\Theta|+|D^2u|)D\Gamma^{a}uD\Gamma^{a}v\|_{L^1(\mathbb{R}^3)}\nonumber\\
&+C\|D^2v(|D\Theta|+|Du|)D\Gamma^{a}uD\Gamma^{a}v\|_{L^1(\mathbb{R}^3)}.
\end{align}
For the terms on the right hand side of \eqref{bianjie}, the first term is most important. By Lemma \ref{Hardynew2}, we have
\begin{align}
&\|( |u|+|\Theta|)(|Du|+|D\Theta|)|D\Gamma^{a}v|^2 \|_{L^1(\mathbb{R}^3)}\nonumber\\
&\leq \|( |u|+|\Theta|)(|Du|+|D\Theta|)D\Gamma^{a}v\|_{L^2(\mathbb{R}^3)}\|D\Gamma^{a}v \|_{L^2(\mathbb{R}^3)}\nonumber\\
&\leq C\langle t\rangle^{-\frac{3}{2}} \big(E_7(\Theta(t))+E_7(u(t)\big)E_7(v(t)).
\end{align}
By Klainerman-Sobolev inequality \eqref{Sobo},
% By Lemma \ref{Hardybbnew662},
 we can also get that the remaining terms on the right hand side of \eqref{bianjie} can be controlled by
 \begin{align}
 \langle t\rangle^{-\frac{3}{2}} \big(E_8(\Theta(t))+E_7(u(t)+E_7(v(t)\big)\big(E_7(u(t))+E_7(v(t))\big).
 \end{align}
Therefore, \eqref{bianjie} can be estimated as
\begin{align}\label{biaddddhhhhnjie}
\|\widetilde{p}(t,\cdot)\|_{L^1(\mathbb{R}^3)}\leq C \langle t\rangle^{-\frac{3}{2}} \big(E_8(\Theta(t))+E_7(u(t)+E_7(v(t)\big)\big(E_7(u(t))+E_7(v(t))\big).
\end{align}
By the energy estimate of \eqref{xianxingeeeee}, and noting \eqref{hju7899} and \eqref{hu788}, we can get
\begin{align}\label{7890hjy65}
E^{\frac{1}{2}}_8(\Theta(t))\leq C\big(\|\Theta_0\|_{H^{8}(\mathbb{R}^3)}+\|\Theta_1\|_{H^{7}(\mathbb{R}^3)}\big)\leq C\lambda.
\end{align}
\par
In the following, we will estimate $\|\partial_t \Gamma^{a}uf_a\|_{L^1(\mathbb{R}^3)}$ and $\|\partial_t \Gamma^{a}vg_a\|_{L^1(\mathbb{R}^3)}.$ It is obvious that
\begin{align}
&\|\partial_t \Gamma^{a}uf_a\|_{L^1(\mathbb{R}^3)}+\|\partial_t \Gamma^{a}vg_a\|_{L^1(\mathbb{R}^3)}\nonumber\\
&\leq \big({E}_7^{\frac{1}{2}}(u(t))+ {E}_7^{\frac{1}{2}}(v(t))\big)\big(\|f_a\|_{L^2(\mathbb{R}^3)}+\|g_a\|_{L^2(\mathbb{R}^3)}\big)
\end{align}
and
\begin{align}\label{fgtyyyeee00}
&\|f_a\|_{L^2(\mathbb{R}^3)}+\|g_a\|_{L^2(\mathbb{R}^3)}\nonumber\\
&\leq \|\Gamma^{a}F_1(u+\Theta, D(u+\Theta), Dv)\|_{L^2(\mathbb{R}^3)}+\| \Gamma^{a}G_1(u+\Theta, D(u+\Theta), Dv)\|_{L^2(\mathbb{R}^3)}\nonumber\\
&+
\|\big[\Gamma^{a}, a_{\mu\nu}(u+\Theta, Dv)\partial_{\mu}\partial_{\nu}\big](u+\Theta)\|_{L^2(\mathbb{R}^3)}+\|\big[\Gamma^a,b_{\mu\nu}(u+\Theta, D(u+\Theta), Dv)\partial_{\mu}\partial_{\nu}\big]v\|_{L^2(\mathbb{R}^3)}\nonumber\\
&+
\|\big[\Gamma^{a}, c_{\mu\nu}(u+\Theta,D(u+\Theta), Dv)\partial_{\mu}\partial_{\nu}\big](u+\Theta)\|_{L^2(\mathbb{R}^3)}\nonumber\\
&+\|\big[\Gamma^a,d_{\mu\nu}(u+\Theta, D(u+\Theta), Dv)\partial_{\mu}\partial_{\nu}\big]v\|_{L^2(\mathbb{R}^3)}+\|[\Box, \Gamma^{a}]u\|_{L^2(\mathbb{R}^3)}+\|[\Box, \Gamma^{a}]v\|_{L^2(\mathbb{R}^3)}.
\end{align}
In view of \eqref{PF00000}, \eqref{PF11111}, \eqref{PF22222} and \eqref{PF1dddd1111}, for the terms containing on the right hand side of \eqref{fgtyyyeee00}, we will only focus on the estimates of the following ones
\begin{align}\label{fgtyyyeee8899900888}
&\|\sin(2(u+\Theta))Q(v,v)\|_{\Gamma,6,2}+\|\sin(2(u+\Theta))Q(u+\Theta,v)\|_{\Gamma,6,2}\nonumber\\
&~~~~~~~~~~~~~~~~~~~~+\sum_{\substack{|\beta|+|d|\leq 6\\ |d|\leq 5}}\|\Gamma^{\beta}\sin^2(2(u+\Theta))\Box\Gamma^{d}v\|_{L^2(\mathbb{R}^3)}.
\end{align}
 The remaining terms can be treated similarly. \par
 It follows from Lemma \ref{composite} and Lemma \ref{Hardynew2} that
\begin{align}\label{5677700}
&\|\sin(2(u+\Theta))Q(v,v)\|_{\Gamma,6,2}\nonumber\\
&\leq C\sum_{|b|+|\beta|\leq 6}\|\Gamma^{b}\sin(2(u+\Theta))\Gamma^{\beta}Q(v,v)\|_{L^2(\mathbb{R}^3)}\nonumber\\
&\leq C\sum_{|b|+|c|+|d|\leq 6}\|\Gamma^{b}(u+\Theta)D\Gamma^{c}vD\Gamma^{d}v\|_{L^2(\mathbb{R}^3)}\nonumber\\
&\leq C\langle t\rangle^{-\frac{3}{2}}\big(E_{7}^{\frac{1}{2}}(u(t))+E^{\frac{1}{2}}_{7}(\Theta(t))\big)E(v(t)).
\end{align}
Similarly, we also have
\begin{align}\label{56777fff00}
&\|\sin(2(u+\Theta))Q(u+\Theta,v)\|_{\Gamma,6,2}\nonumber\\
&\leq C\sum_{|b|+|\beta|\leq 6}\|\Gamma^{b}\sin(2(u+\Theta))\Gamma^{\beta}Q(u+\Theta,v)\|_{L^2(\mathbb{R}^3)}\nonumber\\
&\leq C\sum_{|b|+|c|+|d|\leq 6}\|\Gamma^{b}(u+\Theta)D\Gamma^{c}(u+\Theta)D\Gamma^{d}v\|_{L^2(\mathbb{R}^3)}\nonumber\\
&\leq C\langle t\rangle^{-\frac{3}{2}}\big(E_{7}(u(t))+E_{7}(\Theta(t))\big)E^{\frac{1}{2}}(v(t)).
\end{align}
By Lemma \ref{composite} and Lemma \ref{Hardynew2888}, we have
\begin{align}
&\sum_{\substack{|\beta|+|d|\leq 6\\ |d|\leq 5}}\|\Gamma^{\beta}\sin^2(2(u+\Theta))\Box\Gamma^{d}v\|_{L^2(\mathbb{R}^3)}\nonumber\\
&\leq C\sum_{\substack{|b|+|c|+|d|\leq 6\\|d|\leq 5}}\|\Gamma^{b}(u+\Theta)\Gamma^{c}(u+\Theta)D^2\Gamma^{d}v\|_{L^2(\mathbb{R}^3)}\nonumber\\
&\leq C\langle t\rangle^{-\frac{3}{2}}\big(E_{7}(u(t))+E_{7}(\Theta(t))\big)E^{\frac{1}{2}}(v(t)).
\end{align}
From the above discussion, we obtain
\begin{align}\label{xuyoa99088}
&\|\partial_t \Gamma^{a}uf_a\|_{L^1(\mathbb{R}^3)}+\|\partial_t \Gamma^{a}vg_a\|_{L^1(\mathbb{R}^3)}\nonumber\\
&\leq \langle t\rangle^{-\frac{3}{2}} \big(E_8(\Theta(t))+E_7(u(t)+E_7(v(t)\big)\big(E_7(u(t))+E_7(v(t))\big).
\end{align}
\par
Thanks to \eqref{rule50}, \eqref{rtt5666666699kkk8}, \eqref{biaddddhhhhnjie}, \eqref{7890hjy65} and \eqref{xuyoa99088}, we get
\begin{align}
&E_7(u(t))+E_7(v(t))\nonumber\\
&\leq C\varepsilon^2+C\int_0^{t}
\langle \tau\rangle^{-\frac{3}{2}} \big(E_7(u(\tau)+E_7(v(\tau)\big)^2d\tau\nonumber\\
&~~~~~~+C\int_0^{t}
\langle \tau\rangle^{-\frac{3}{2}}E_8(\Theta(\tau))\big(E_7(u(\tau))+E_7(v(\tau))\big)dt\nonumber\\
&\leq C\varepsilon^2+16CA^4\varepsilon^4+C\int_0^{t}
\langle \tau\rangle^{-\frac{3}{2}}\big(E_7(u(\tau))+E_7(v(\tau))\big)dt.
\end{align}
By Gronwall's inequality, we have
\begin{align}\label{hjii908ggg8}
{E}^{\frac{1}{2}}_{7}(u(t)) +{E}^{\frac{1}{2}}_{7}(v(t))\leq C_0\varepsilon+4C_0A^2\varepsilon^2.
\end{align}
\subsection{Conclusion of the proof}
Noting \eqref{hjii908ggg8}, we have obtained
\begin{align}
\sup_{0\leq t\leq T}\big({E}^{\frac{1}{2}}_{7}(u(t)) +{E}^{\frac{1}{2}}_{7}(v(t))\big)\leq C_0\varepsilon+4C_0A^2\varepsilon^2.
\end{align}
Assume that
\begin{align}
E^{\frac{1}{2}}_7(u(0))+E^{\frac{1}{2}}_7(v(0))\leq \widetilde{C}_0\varepsilon^2.
\end{align}
Take $A=\max\{4C_0,4 \widetilde{C}_0\}$ and $\varepsilon_0$ sufficiently small such that
\begin{align}
16C_0A\varepsilon\leq 1.
\end{align}
Then for any $0<\varepsilon\leq \varepsilon_0$, we have
\begin{align}
\sup_{0\leq t\leq T}\big({E}^{\frac{1}{2}}_{7}(u(t)) +{E}^{\frac{1}{2}}_{7}(v(t))\big)\leq A\varepsilon,
\end{align}
which completes the proof of Theorem \ref{mainthm}.
\section{Proof of Theorem \ref{mainthm2}}
In this section, we will prove Theorem \ref{mainthm2}, i.e., the global nonlinear stability theorem of geodesic solutions for evolutionary Faddeev model when $n=2$,  by some suitable bootstrap argument.
We note that in the proof of Theorem \ref{mainthm}, i.e., the global nonlinear stability theorem of geodesic solutions for evolutionary Faddeev model when $n=3$, only the energy estimate is used and the null structure of the system \eqref{syst3} is not employed. The $n=2$ case is much more complicated since the slower decay in time. In order to prove Theorem \ref{mainthm2}, we will exploit the null structure of the system \eqref{syst3} in energy estimates by using Alinhac's ghost weight energy method. To get enough decay rate, we will also use H\"{o}rmander's $L^1$--$L^{\infty}$ estimates, in which the null structure will be also employed. The common feature in the using of these estimates is the sufficient utilization of decay in $\langle t-r\rangle$, besides in $\langle t\rangle$. Some similar idea can be also found in Zha \cite{MR3912654}, which is partially inspired by Alinhac \cite{MR1856402} and Katayama \cite{MR1371789}.
\par
Assume that $(u,v)$ is a local classical solution to Cauchy problem \eqref{syst3}--\eqref{xuyaoop} on $[0, T]$. We will prove that there exist positive constants $A_1, A_2$ and $\varepsilon_0$ such that
\begin{align}
\sup_{0\leq t\leq T}\big(E_{7}^{\frac{1}{2}}(u(t))+E_{7}^{\frac{1}{2}}(v(t))\big)\leq A_1\varepsilon~~\text{and}~~\sup_{0\leq t\leq T}\big(\mathcal {X}_{4}(u(t))+\mathcal {X}_{4}(v(t))\big)\leq A_2\varepsilon
\end{align}
under the assumption
\begin{align}
\sup_{0\leq t\leq T}\big(E_{7}^{\frac{1}{2}}(u(t))+E_{7}^{\frac{1}{2}}(v(t))\big)\leq 2A_1\varepsilon~~\text{and}~~\sup_{0\leq t\leq T}\big(\mathcal {X}_{4}(u(t))+\mathcal {X}_{4}(v(t))\big)\leq 2A_2\varepsilon,
\end{align}
where $0<\varepsilon\leq \varepsilon_0$.
\subsection{Energy estimates}
In this subsection, we will first give the estimates on the energies $E_{7}(u(t))$ and $E_{7}(v(t))$. %For this purpose, it is necessary to introduce some notations. Denote
%\begin{align}
%&F(u+\Theta, D(u+\Theta), Dv, D^2(u+\Theta), D^2v)\nonumber\\
%&=a_{\mu\nu}(u+\Theta, Dv)\partial_{\mu}\partial_{\nu}(u+\Theta)+b_{\mu\nu}(u+\Theta, D(u+\Theta), Dv)\partial_{\mu}\partial_{\nu}v\nonumber\\
% &~~+F_1(u+\Theta, D(u+\Theta), Dv),
%\end{align}
%where
%\begin{align}
%&a_{\mu\nu}(u+\Theta, Dv)\partial_{\mu}\partial_{\nu}(u+\Theta)+b_{\mu\nu}(u+\Theta, D(u+\Theta), Dv)\partial_{\mu}\partial_{\nu}v\nonumber\\
%&=
%-\frac{1}{2}\cos^2(u+\Theta) Q_{\mu\nu}
%\big(v,Q^{\mu\nu}(u+\Theta,v)\big)
%\end{align}
%and
%\begin{align}\label{PF11111}
%&F_1(u+\Theta, D(u+\Theta), Dv)\nonumber\\
%&=-\frac{1}{2}\sin(2(u+\Theta))Q(v,v)-\frac{1}{4}\sin(2(u+\Theta))Q_{\mu\nu}(u+\Theta,v)Q^{\mu\nu}(u+\Theta,v).
%\end{align}
%Similarly, we also denote
%\begin{align}
%&G(u+\Theta, D(u+\Theta), Dv, D^2(u+\Theta), D^2v)\nonumber\\
%&=c_{\mu\nu}(u+\Theta, Dv)\partial_{\mu}\partial_{\nu}(u+\Theta)+d_{\mu\nu}(u+\Theta, D(u+\Theta), Dv)\partial_{\mu}\partial_{\nu}v\nonumber\\
% &~~+G_1(u+\Theta, D(u+\Theta), Dv),
%\end{align}
%where
%\begin{align}
%&c_{\mu\nu}(u+\Theta,D(u+\Theta), Dv)\partial_{\mu}\partial_{\nu}(u+\Theta)+d_{\mu\nu}(u+\Theta, D(u+\Theta), Dv)\partial_{\mu}\partial_{\nu}v\nonumber\\
%&=
%\sin^2(u+\Theta) \Box v+\frac{1}{2}\cos^2(u+\Theta) Q_{\mu\nu}
%\big(u+\Theta,Q^{\mu\nu}(u+\Theta,v)\big)
%\end{align}
%and
%\begin{align}\label{PF1dddd1111}
%G_1(u+\Theta, D(u+\Theta), Dv)
%=\sin(2(u+\Theta))Q(u+\Theta,v).
%\end{align}
Similarly to the 3-D case, thanks to Lemma \ref{LEM2134}, for any multi-index $a, |a|\leq 6$, we have
\begin{align}\label{rule1}
\Box \Gamma^{a}u&=-\frac{1}{2}\cos^2(u+\Theta) Q_{\mu\nu}
\big(v,Q^{\mu\nu}(\Gamma^au+\Gamma^a\Theta,v)\big)\nonumber\\
&~~~-\frac{1}{2}\cos^2(u+\Theta) Q_{\mu\nu}
\big(v,Q^{\mu\nu}(u+\Theta,\Gamma^av)\big)+f_{a}
\end{align}
and
\begin{align}\label{rule2}
\Box \Gamma^{a}v&=\sin^2(u+\Theta) \Box \Gamma^{a}v+\frac{1}{2}\cos^2(u+\Theta) Q_{\mu\nu}
\big(u+\Theta,Q^{\mu\nu}(\Gamma^{a}u+\Gamma^{a}\Theta,v)\big)\nonumber\\
&~~~+\frac{1}{2}\cos^2(u+\Theta) Q_{\mu\nu}
\big(u+\Theta,Q^{\mu\nu}(u+\Theta,\Gamma^{a}v)\big)+g_{a},
\end{align}
where $f_a$ and $g_a$ are defined through \eqref{faaaa} and \eqref{fdddda}.
%\begin{align}
%&f_a=\big[\Gamma^{a}, a_{\mu\nu}(u+\Theta, Dv)\partial_{\mu}\partial_{\nu}\big](u+\Theta)+\big[\Gamma^a,b_{\mu\nu}(u+\Theta, D(u+\Theta), Dv)\partial_{\mu}\partial_{\nu}\big]v\nonumber\\
%&~~~~+\Gamma^{a}F_1(u+\Theta, D(u+\Theta), Dv)+[\Box, \Gamma^{a}]u.
%\end{align}
%Similarly, $\Gamma^{a}v$ satisfies
%\begin{align}\label{rule2}
%\Box \Gamma^{a}v&=\sin^2(u+\Theta) \Box \Gamma^{a}v+\frac{1}{2}\cos^2(u+\Theta) Q_{\mu\nu}
%\big(u+\Theta,Q^{\mu\nu}(\Gamma^{a}u+\Gamma^{a}\Theta,v)\big)\nonumber\\
%&~~~+\frac{1}{2}\cos^2(u+\Theta) Q_{\mu\nu}
%\big(u+\Theta,Q^{\mu\nu}(u+\Theta,\Gamma^{a}v)\big)+g_{a},
%\end{align}
%where
%\begin{align}
%&g_a=\big[\Gamma^{a}, c_{\mu\nu}(u+\Theta,D(u+\Theta), Dv)\partial_{\mu}\partial_{\nu}\big](u+\Theta)+\big[\Gamma^a,d_{\mu\nu}(u+\Theta, D(u+\Theta), Dv)\partial_{\mu}\partial_{\nu}\big]v\nonumber\\
%&~~~~+\Gamma^{a}G_1(u+\Theta, D(u+\Theta), Dv)+[\Box, \Gamma^{a}]v.
%\end{align}
 \par
By Leibniz's rule, we have
\begin{align}\label{rule3}
 \langle e^{-q(\sigma)}\partial_t \Gamma^{a}u, \Box \Gamma^{a}u\rangle+ \langle e^{-q(\sigma)}\partial_t \Gamma^{a}v, \Box \Gamma^{a}v\rangle=\partial_te_0+\nabla\cdot q_0+p_0,
\end{align}
where
\begin{align}
e_0=\frac{1}{2}e^{-q(\sigma)}\big(|D \Gamma^{a}u|^2+|D \Gamma^{a}v|^2\big)
\end{align}
and
\begin{align}
p_0=\frac{1}{2}e^{-q(\sigma)} q'(\sigma)\big(|T\Gamma^{a}u|^2+|T\Gamma^{a}v|^2\big).
\end{align}
Leibniz's rule also gives
\begin{align}\label{rule4}
&\langle e^{-q(\sigma)}\partial_t \Gamma^{a}u,  -\frac{1}{2}\cos^2(u+\Theta) Q_{\mu\nu}
\big(v,Q^{\mu\nu}(\Gamma^au+\Gamma^a\Theta,v)\big)\rangle\nonumber\\
&+ \langle e^{-q(\sigma)}\partial_t \Gamma^{a}u,  -\frac{1}{2}\cos^2(u+\Theta) Q_{\mu\nu}
\big(v,Q^{\mu\nu}(u+\Theta,\Gamma^av)\big)\rangle\nonumber\\
&+ \langle e^{-q(\sigma)}\partial_t \Gamma^{a}v, \sin^2(u+\Theta) \Box \Gamma^{a}v\rangle\nonumber\\
&+ \langle e^{-q(\sigma)}\partial_t \Gamma^{a}v,  \frac{1}{2}\cos^2(u+\Theta) Q_{\mu\nu}
\big(u+\Theta,Q^{\mu\nu}(\Gamma^au+\Gamma^a\Theta,v)\big)\rangle\nonumber\\
&+ \langle e^{-q(\sigma)}\partial_t \Gamma^{a}v,  \frac{1}{2}\cos^2(u+\Theta) Q_{\mu\nu}
\big(u+\Theta,Q^{\mu\nu}(u+\Theta,\Gamma^av)\big)\rangle\nonumber\\
&=\partial_t\widetilde{e}+\nabla\cdot \widetilde{q}+\widetilde{p},
\end{align}
where
 \begin{align}
\widetilde{ e}=\widetilde{e}_0+e_1
 \end{align}
 with
 \begin{align}
\widetilde{e}_0&=\frac{1}{2} e^{-q(\sigma)}\sin^2(u+\Theta)|D \Gamma^{a}v|^2\nonumber\\
&+
e^{-q(\sigma)}\cos^2(u+\Theta)\partial_t \Gamma^{a}v\partial_{\mu}\Theta Q^{\mu0}(\Theta,\Gamma^av)\nonumber\\
&-\frac{1}{4}e^{-q(\sigma)}\cos^2(u+\Theta)Q_{\mu\nu}(\Theta,\Gamma^{a}v)Q^{\mu\nu}(\Theta,\Gamma^av)
\end{align}
and
\begin{align}\label{e111990hjuu}
e_1&=-e^{-q(\sigma)}\cos^2(u+\Theta)\partial_t \Gamma^{a}u\partial_{\mu}v\big(Q^{\mu0}(\Gamma^au,v)+Q^{\mu0}(u+\Theta,\Gamma^av)\big)\nonumber\\
&+e^{-q(\sigma)}\cos^2(u+\Theta)\partial_t \Gamma^{a}v\partial_{\mu}(u+\Theta)\big(Q^{\mu0}(\Gamma^au,v)+Q^{\mu0}(u,\Gamma^av)\big)\nonumber\\
&+e^{-q(\sigma)}\cos^2(u+\Theta)\partial_t \Gamma^{a}v\partial_{\mu}uQ^{\mu0}(\Theta,\Gamma^av)\nonumber\\
&+\frac{1}{4}e^{-q(\sigma)}\cos^2(u+\Theta)Q_{\mu\nu}(v,\Gamma^au)\big(Q^{\mu\nu}(\Gamma^au,v)+2Q^{\mu\nu}(u+\Theta,\Gamma^av)\big)\nonumber\\
&-\frac{1}{4}e^{-q(\sigma)}\cos^2(u+\Theta)Q_{\mu\nu}(u+2\Theta,\Gamma^{a}v)Q^{\mu\nu}(u,\Gamma^av),%\nonumber\\
%&-\frac{1}{2}e^{-q(\sigma)}\cos^2(u+\Theta)Q_{\mu\nu}(\Theta,\Gamma^{a}v)Q^{\mu\nu}(u,\Gamma^av).
\end{align}
and
\begin{align}\label{with0}
\widetilde{p}
&=\frac{1}{2}\ e^{-q(\sigma)}\sin^2(u+\Theta)q'(\sigma){|T\Gamma^{a}v|^2}\nonumber\\
&-\frac{1}{2}e^{-q(\sigma)}\sin(2(u+\Theta))\big(\partial_t \Gamma^{a}vQ(u+\Theta,\Gamma^{a}v)+\partial_i \Gamma^{a}vQ_{0i}(u+\Theta,\Gamma^{a}v)\big)\nonumber\\
&+\frac{1}{2}e^{-q(\sigma)}\cos^2(u+\Theta)\partial_t \Gamma^{a}v Q_{\mu\nu}
\big(u+\Theta,Q^{\mu\nu}(\Gamma^a\Theta,v)\big)\nonumber\\
&+\frac{1}{2}e^{-q(\sigma)}\cos^2(u+\Theta)Q_{\mu\nu}(\partial_tu+\partial_t\Theta,\Gamma^{a}v)Q^{\mu\nu}(u+\Theta,\Gamma^av)\nonumber\\
&-e^{-q(\sigma)}q'(\sigma)\cos^2(u+\Theta)T_{\nu} \Gamma^{a}v\partial_{\mu}(u+\Theta)Q^{\mu\nu}(u+\Theta,\Gamma^av)\nonumber\\
&+\frac{1}{4}e^{-q(\sigma)}q'(\sigma)\cos^2(u+\Theta)Q_{\mu\nu}(u+\Theta,\Gamma^{a}v)Q^{\mu\nu}(u+\Theta,\Gamma^av)\nonumber\\
&-\frac{1}{4}e^{-q(\sigma)}\sin(2(u+\Theta))Q_{\mu\nu}(u+\Theta,\Gamma^{a}v)Q^{\mu\nu}(u+\Theta,\Gamma^av)\nonumber\\
&
-\frac{1}{2}e^{-q(\sigma)}\cos^2(u+\Theta)Q_{\mu\nu}(\partial_tv,\Gamma^au)Q^{\mu\nu}(\Gamma^au,v)\nonumber\\
&
-\frac{1}{2}e^{-q(\sigma)}\cos^2(u+\Theta)\partial_t \Gamma^{a}uQ_{\mu\nu}
\big(v,Q^{\mu\nu}(\Gamma^a\Theta,v)\big)\nonumber\\
&+e^{-q(\sigma)}q'(\sigma)\cos^2(u+\Theta)T_{\nu} \Gamma^{a}u\partial_{\mu}vQ^{\mu\nu}(\Gamma^au,v)\nonumber\\
&-\frac{1}{4}e^{-q(\sigma)}q'(\sigma)\cos^2(u+\Theta)Q_{\mu\nu}(v,\Gamma^au)Q^{\mu\nu}(\Gamma^au,v)\nonumber\\
&-\frac{1}{2}e^{-q(\sigma)}\sin(2(u+\Theta))\partial_t \Gamma^{a}uQ_{\mu\nu}(v,u+\Theta)Q^{\mu\nu}(\Gamma^au,v)\nonumber\\
&+\frac{1}{4}e^{-q(\sigma)}\sin(2(u+\Theta))(\partial_tu+\partial_t\Theta)Q_{\mu\nu}(v,\Gamma^au)Q^{\mu\nu}(\Gamma^au,v)\nonumber\\
&-\frac{1}{2}e^{-q(\sigma)}\cos^2(u+\Theta)Q_{\mu\nu}(\partial_tv, \Gamma^au)Q^{\mu\nu}(u+\Theta,\Gamma^av)\nonumber\\
&-\frac{1}{2}e^{-q(\sigma)}\cos^2(u+\Theta)Q_{\mu\nu}(v, \Gamma^au)Q^{\mu\nu}(\partial_tu+\partial_t\Theta,\Gamma^av)\nonumber\\
&+e^{-q(\sigma)}q'(\sigma)\cos^2(u+\Theta)T_{\nu} \Gamma^{a}u\partial_{\mu}vQ^{\mu\nu}(u+\Theta,\Gamma^av)
\end{align}
\begin{align}
&-\frac{1}{2}e^{-q(\sigma)}\sin(2(u+\Theta))\partial_t \Gamma^{a}uQ_{\mu\nu}(v,u+\Theta)Q^{\mu\nu}(u+\Theta,\Gamma^av)\nonumber\\
&+\frac{1}{2}e^{-q(\sigma)}\sin(2(u+\Theta))(\partial_tu+\partial_t\Theta)Q_{\mu\nu}(v,\Gamma^{a}u)Q^{\mu\nu}(u+\Theta,\Gamma^av)\nonumber\\
&-e^{-q(\sigma)}q'(\sigma)\cos^2(u+\Theta)T_{\nu} \Gamma^{a}v\partial_{\mu}(u+\Theta)Q^{\mu\nu}(\Gamma^au,v)\nonumber\\
&+\frac{1}{2}e^{-q(\sigma)}q'(\sigma)\cos^2(u+\Theta)Q_{\mu\nu}(u+\Theta,\Gamma^{a}v)Q^{\mu\nu}(\Gamma^au,v).
\end{align}
By \eqref{rule1}, \eqref{rule2}, \eqref{rule3}, \eqref{rule4} and the divergence theorem, we can get
\begin{align}\label{rule5}
&\frac{d}{dt}\int_{\mathbb{R}^2}\big(e_0(t,x)-\widetilde{ e}(t,x)\big) dx+\int_{\mathbb{R}^2}p_0(t,x) dx\nonumber\\
&\leq \int_{\mathbb{R}^2}|\widetilde{p}(t,x)| dx+\int_{\mathbb{R}^2}|\langle e^{-q(\sigma)}\partial_t \Gamma^{a}u, f_a\rangle| dx+\int_{\mathbb{R}^2}|\langle e^{-q(\sigma)}\partial_t \Gamma^{a}v, g_a\rangle| dx.
\end{align}
Noting
\begin{align}
\widetilde{e}_0
&=\frac{1}{2} e^{-q(\sigma)}\sin^2(u+\Theta)|D \Gamma^{a}v|^2\nonumber\\
&+\frac{1}{2}e^{-q(\sigma)}\cos^2(u+\Theta)\big(2\partial_t \Gamma^{a}v\partial_{\mu}\Theta Q^{\mu0}(\Theta,\Gamma^av)-\frac{1}{2}Q_{\mu\nu}(\Theta,\Gamma^{a}v)Q^{\mu\nu}(\Theta,\Gamma^av)\big)\nonumber\\
&=\frac{1}{2} e^{-q(\sigma)}\sin^2(u+\Theta)|D \Gamma^{a}v|^2\nonumber\\
&+\frac{1}{2}e^{-q(\sigma)}\cos^2(u+\Theta)\big(|\partial_t \Theta|^2|\nabla \Gamma^{a}v|^2-|\nabla \Theta|^2|\partial_t \Gamma^{a}v|^2-\frac{1}{2}Q_{ij}(\Theta,\Gamma^{a}v)Q_{ij}(\Theta,\Gamma^{a}v)\big)\nonumber\\
&=\frac{1}{2} e^{-q(\sigma)}\sin^2(u+\Theta)|D \Gamma^{a}v|^2\nonumber\\
&+\frac{1}{2}e^{-q(\sigma)}\cos^2(u+\Theta)\big(Q(\Theta,\Theta)|\nabla \Gamma^{a}v|^2+(\nabla \Theta\cdot \nabla \Gamma^{a}v)^2\big),
\end{align}
we have
\begin{align}\label{hj89eeee99}
&e_0-\widetilde{e}_0\nonumber\\
&=\frac{1}{2}e^{-q(\sigma)}|D \Gamma^{a}u|^2+\frac{1}{2}e^{-q(\sigma)}\cos^2(u+\Theta)|\partial_t \Gamma^{a}v|^2\nonumber\\
&+\frac{1}{2}e^{-q(\sigma)}\cos^2(u+\Theta)\big((1-Q(\Theta,\Theta))|\nabla \Gamma^{a}v|^2-(\nabla \Theta\cdot \nabla \Gamma^{a}v)^2\big)\nonumber\\
&\geq \frac{1}{2}e^{-q(\sigma)}|D \Gamma^{a}u|^2+\frac{1}{2}e^{-q(\sigma)}\cos^2(u+\Theta)|\partial_t \Gamma^{a}v|^2\nonumber\\
&+\frac{1}{2}e^{-q(\sigma)}\cos^2(u+\Theta)(1-|\partial_t\Theta|^2)|\nabla \Gamma^{a}v|^2.
\end{align}
By \eqref{hj89eeee99}, \eqref{e111990hjuu}, Remark \ref{rem9999888}
 and the smallness of $|u|, |Du|$ and $|Dv|$,
we can obtain that there exists a positive constant $c_2=c_2(\widetilde{\lambda}_0,\widetilde{\lambda}_1)$ such that
\begin{align}\label{rtt56666666}
c_2^{-1}e_0\leq e_0-\widetilde{e}=e_0-\widetilde{e}_0-e_1 \leq c_2e_0.
\end{align}
\par
Now we will estimate all the terms on the right hand side of \eqref{rule5}. Thanks to \eqref{with0} and Lemma \ref{QL}, we have the pointwise estimate
\begin{align}
|\widetilde{p}|&\leq C\big(|u|^2_{\Gamma,2}+|v|^2_{\Gamma,2}+|\Theta|^2_{\Gamma,8}\big)\big(|Du|_{\Gamma,6}+|Dv|_{\Gamma,6}\big)(\sum_{||b|\leq 6}|T\Gamma^{b}u|+\sum_{||b|\leq 6}|T\Gamma^{b}v|)\nonumber\\
&+C\langle t\rangle^{-1}\big(|u|^2_{\Gamma,2}+|v|^2_{\Gamma,2}+|\Theta|^2_{\Gamma,8}\big)\big(|Du|_{\Gamma,6}+|Dv|_{\Gamma,6}\big)^2.
\end{align}
Thus we have
\begin{align}\label{y78900}
&\|\widetilde{p}(t,\cdot)\|_{L^1(\mathbb{R}^2)}\nonumber\\
&\leq C\langle t\rangle^{-1}\big(\mathcal {X}^2_2(u(t))+\mathcal {X}^2_2(v(t))+\mathcal {X}^2_8({\Theta}(t))\big)\big(E_7^{\frac{1}{2}}(u(t))+E_7^{\frac{1}{2}}(v(t))\big)\big(\mathcal {E}_7^{\frac{1}{2}}(u(t))+\mathcal {E}_7^{\frac{1}{2}}(v(t))\big) \nonumber\\
&+ C \langle t\rangle^{-2}\big(\mathcal {X}^2_2(u(t))+\mathcal {X}^2_2(v(t))+\mathcal {X}^2_8({\Theta}(t))\big)\big(E_7(u(t))+E_7(v(t))\big).
\end{align}
It follows from \eqref{xianxingeeeee}£¬ \eqref{hju7899}, \eqref{67yu969} and Lemma \ref{Linfty} that
\begin{align}\label{4890uoop}
\mathcal {X}_8({\Theta}(t))\leq C\big(\|\Theta_0\|_{W^{10,1}(\mathbb{R}^2)}+\|\Theta_1\|_{W^{9,1}(\mathbb{R}^2)}\big)\leq C\widetilde{\lambda}.
\end{align}
\par
Now we estimate $\|\partial_t \Gamma^{a}uf_a\|_{L^1(\mathbb{R}^2)}$ and $\|\partial_t \Gamma^{a}vg_a\|_{L^1(\mathbb{R}^2)}.$ It is obvious that
\begin{align}
&\|\partial_t \Gamma^{a}uf_a\|_{L^1(\mathbb{R}^2)}+\|\partial_t \Gamma^{a}vg_a\|_{L^1(\mathbb{R}^2)}\nonumber\\
&\leq \big({E}_7^{\frac{1}{2}}(u(t))+ {E}_7^{\frac{1}{2}}(v(t))\big)\big(\|f_a\|_{L^2(\mathbb{R}^2)}+\|g_a\|_{L^2(\mathbb{R}^2)}\big)
\end{align}
and
\begin{align}\label{fgtyyyeee}
&\|f_a\|_{L^2(\mathbb{R}^2)}+\|g_a\|_{L^2(\mathbb{R}^2)}\nonumber\\
&\leq \|\Gamma^{a}F_1(u+\Theta, D(u+\Theta), Dv)\|_{L^2(\mathbb{R}^2)}+\| \Gamma^{a}G_1(u+\Theta, D(u+\Theta), Dv)\|_{L^2(\mathbb{R}^2)}\nonumber\\
&+
\|\big[\Gamma^{a}, a_{\mu\nu}(u+\Theta, Dv)\partial_{\mu}\partial_{\nu}\big](u+\Theta)\|_{L^2(\mathbb{R}^2)}+\|\big[\Gamma^a,b_{\mu\nu}(u+\Theta, D(u+\Theta), Dv)\partial_{\mu}\partial_{\nu}\big]v\|_{L^2(\mathbb{R}^2)}\nonumber\\
&+
\|\big[\Gamma^{a}, c_{\mu\nu}(u+\Theta,D(u+\Theta), Dv)\partial_{\mu}\partial_{\nu}\big](u+\Theta)\|_{L^2(\mathbb{R}^2)}+\|[\Box, \Gamma^{a}]u\|_{L^2(\mathbb{R}^2)}\nonumber\\
&+\|\big[\Gamma^a,d_{\mu\nu}(u+\Theta, D(u+\Theta), Dv)\partial_{\mu}\partial_{\nu}\big]v\|_{L^2(\mathbb{R}^2)}+\|[\Box, \Gamma^{a}]v\|_{L^2(\mathbb{R}^2)}.
\end{align}
We will only focus on the estimates of the first and second parts on the right hand side of \eqref{fgtyyyeee}, the remaining parts can be treated similarly.
In view of \eqref{PF11111} and \eqref{PF1dddd1111}, we have
\begin{align}\label{fgtyyyeee88999}
& \|\Gamma^{a}F_1(u+\Theta, D(u+\Theta), Dv)_{L^2(\mathbb{R}^2)}+\| \Gamma^{a}G_1(u+\Theta, D(u+\Theta), Dv)\|_{L^2(\mathbb{R}^2)}\nonumber\\
&\leq \|\sin(2(u+\Theta))Q(v,v)\|_{\Gamma,6,2}+\|\sin(2(u+\Theta))Q(u+\Theta,v)\|_{\Gamma,6,2}\nonumber\\
&+\|\sin(2(u+\Theta))Q_{\mu\nu}(u+\Theta,v)Q^{\mu\nu}(u+\Theta,v)\|_{\Gamma,6,2}.
\end{align}
It follows from Lemma \ref{composite} and Lemma \ref{QL} that
\begin{align}\label{56777}
&\|\sin(2(u+\Theta))Q(v,v)\|_{\Gamma,6,2}\nonumber\\
&\leq C\sum_{|b|+|\beta|\leq 6}\|\Gamma^{b}\sin(2(u+\Theta))\Gamma^{\beta}Q(v,v)\|_{L^2(\mathbb{R}^2)}\nonumber\\
&\leq C\sum_{|b|+|c|+|d|\leq 6}\|\Gamma^{b}u D\Gamma^{c}v T\Gamma^{d}v\|_{L^2(\mathbb{R}^2)}+ C\sum_{|b|+|c|+|d|\leq 6}\|\Gamma^{b}\Theta D\Gamma^{c}v T\Gamma^{d}v\|_{L^2(\mathbb{R}^2)}.
\end{align}
For $|b|+|c|+|d|\leq 6$, if $|b|+|c|\leq 3$, we have
\begin{align}\label{fgrtt}
&\|\Gamma^{b}u D\Gamma^{c}v T\Gamma^{d}v\|_{L^2(\mathbb{R}^2)}\nonumber\\
&\leq C\langle t\rangle^{-1} \|\langle t+r\rangle^{\frac{1}{2}}\langle t-r\rangle^{\frac{1}{2}}\Gamma^{b}u\|_{L^{\infty}(\mathbb{R}^2)} \|\langle t+r\rangle^{\frac{1}{2}}\langle t-r\rangle^{\frac{1}{2}} D\Gamma^{c}v\|_{L^{\infty}(\mathbb{R}^2)}  \|\langle t-r\rangle^{-1} T\Gamma^{d}v\|_{L^2(\mathbb{R}^2)}\nonumber\\
&\leq C\langle t\rangle^{-1} \mathcal {X}_{4}(u(t))\mathcal {X}_{4}(v(t))\mathcal {E}^{\frac{1}{2}}_{7}(v(t)).
\end{align}
If $|b|+|d|\leq 3$, by \eqref{gooddecay} we get
\begin{align}\label{xiaopjk}
&\|\Gamma^{b}u D\Gamma^{c}v T\Gamma^{d}v\|_{L^2(\mathbb{R}^2)}\nonumber\\
&\leq C\langle t\rangle^{-2} \|\langle t+r\rangle^{\frac{1}{2}}\langle t-r\rangle^{\frac{1}{2}}\Gamma^{b}u\|_{L^{\infty}(\mathbb{R}^2)} \|D\Gamma^{c}v\|_{L^{2}(\mathbb{R}^2)}  \|\langle t+r\rangle^{\frac{1}{2}}\langle t-r\rangle^{\frac{1}{2}}\Gamma^{d+1}v\|_{L^{\infty}(\mathbb{R}^2)}\nonumber\\
&\leq C\langle t\rangle^{-2} \mathcal {X}_{4}(u(t))\mathcal {X}_{4}(v(t)) {E}^{\frac{1}{2}}_{7}(v(t)).
\end{align}
If $|c|+|d|\leq 3$, by Hardy inequality \eqref{Hardy} and \eqref{gooddecay} we have
\begin{align}
&\|\Gamma^{b}u D\Gamma^{c}v T\Gamma^{d}v\|_{L^2(\mathbb{R}^2)}\nonumber\\
&\leq C\langle t\rangle^{-2} \|\langle t-r\rangle^{-1}\Gamma^{b}u\|_{L^{2}(\mathbb{R}^2)} \|\langle t+r\rangle^{\frac{1}{2}}\langle t-r\rangle^{\frac{1}{2}}D\Gamma^{c}v\|_{L^{{\infty}}(\mathbb{R}^2)}  \|\langle t+r\rangle^{\frac{1}{2}}\langle t-r\rangle^{\frac{1}{2}}\Gamma^{d+1}v\|_{L^{\infty}(\mathbb{R}^2)}\nonumber\\
&\leq C\langle t\rangle^{-2} \mathcal {X}_{4}(v(t))\mathcal {X}_{4}(v(t)) {E}^{\frac{1}{2}}_{7}(u(t)).
\end{align}
Thus we obtain
\begin{align}\label{shj799}
&\sum_{|b|+|c|+|d|\leq 6}\|\Gamma^{b}u D\Gamma^{c}v T\Gamma^{d}v\|_{L^2(\mathbb{R}^2)}\nonumber\\
&\leq C\langle t\rangle^{-1} \big(\mathcal {X}^2_{4}(u(t))+\mathcal {X}^2_{4}(v(t))\big)\big(\mathcal {E}^{\frac{1}{2}}_{7}(u(t))\mathcal +\mathcal{E}^{\frac{1}{2}}_{7}(v(t))\big)\nonumber\\
&+ C\langle t\rangle^{-2}\big(\mathcal {X}^2_{4}(u(t))+\mathcal {X}^2_{4}(v(t))\big)\big( {E}^{\frac{1}{2}}_{7}(u(t)) +{E}^{\frac{1}{2}}_{7}(v(t))\big).
\end{align}
For the second term on the right hand side of \eqref{56777}, if $|b|+|c|\leq 3$, similarly to \eqref{fgrtt}, we have
\begin{align}\label{fgrtt111}
\|\Gamma^{b}\Theta D\Gamma^{c}v T\Gamma^{d}v\|_{L^2(\mathbb{R}^2)}\leq C\langle t\rangle^{-1} \mathcal {X}_{4}(\Theta(t))\mathcal {X}_{4}(v(t))\mathcal {E}^{\frac{1}{2}}_{7}(v(t)).
\end{align}
if $|b|+|d|\leq 3$ or $|c|+|d|\leq 3$, similarly to \eqref{xiaopjk}, we have
\begin{align}\label{xiaopjk1111}
\|\Gamma^{b}\Theta D\Gamma^{c}v T\Gamma^{d}v\|_{L^2(\mathbb{R}^2)}\leq C\langle t\rangle^{-2} \mathcal {X}_{8}(\Phi(t))\mathcal {X}_{4}(v(t)) {E}^{\frac{1}{2}}_{7}(v(t)).
\end{align}
Thus we have
\begin{align}\label{frt566tttt}
&\sum_{|b|+|c|+|d|\leq 6}\|\Gamma^{b}\Theta D\Gamma^{c}v T\Gamma^{d}v\|_{L^2(\mathbb{R}^2)}\nonumber\\
&\leq C\langle t\rangle^{-1} \big(\mathcal {X}^2_{8}(\Phi(t))+\mathcal {X}^2_{4}(v(t))\big)\mathcal{E}^{\frac{1}{2}}_{7}(v(t))
+ C\langle t\rangle^{-2}\big(\mathcal {X}^2_{8}(\Phi(t))+\mathcal {X}^2_{4}(v(t))\big){E}^{\frac{1}{2}}_{7}(v(t)).
\end{align}
By \eqref{56777}, \eqref{shj799} and \eqref{frt566tttt}, we have
\begin{align}\label{56777yyyy}
&\|\sin(2(u+\Theta))Q(v,v)\|_{\Gamma,6,2}\nonumber\\
&\leq C\langle t\rangle^{-1} \big(\mathcal {X}^2_{4}(u(t))+\mathcal {X}^2_{4}(v(t))+\mathcal {X}^2_{8}(\Phi(t))\big)\big(\mathcal {E}^{\frac{1}{2}}_{7}(u(t))\mathcal +\mathcal{E}^{\frac{1}{2}}_{7}(v(t))\big)\nonumber\\
&+ C\langle t\rangle^{-2}\big(\mathcal {X}^2_{4}(u(t))+\mathcal {X}^2_{4}(v(t))+\mathcal {X}^2_{8}(\Phi(t))\big)\big( {E}^{\frac{1}{2}}_{7}(u(t)) +{E}^{\frac{1}{2}}_{7}(v(t))\big).
\end{align}
%\begin{align}
%&\|\sin(2(u+\Theta))Q(u+\Theta,v)\|_{\Gamma,6,2}\nonumber\\
%&\leq C\sum_{|b|+|\beta|\leq 6}\|\Gamma^{b}\sin(2(u+\Theta))\Gamma^{\beta}Q(u+\Theta,v)\|_{L^2(\mathbb{R}^2)}\nonumber\\
%&\leq C\sum_{|b|+|c|+|d|\leq 6}\|\Gamma^{b}u D\Gamma^{c}u T\Gamma^{d}v\|_{L^2(\mathbb{R}^2)}+ C\sum_{|b|+|c|+|d|\leq 6}\|\Gamma^{b}u T\Gamma^{c}u D\Gamma^{d}v\|_{L^2(\mathbb{R}^2)}\nonumber\\
%&+ C\sum_{|b|+|c|+|d|\leq 6}\|\Gamma^{b}u D\Gamma^{c}\Theta T\Gamma^{d}v\|_{L^2(\mathbb{R}^2)}+ C\sum_{|b|+|c|+|d|\leq 6}\|\Gamma^{b}u T\Gamma^{c}\Theta D\Gamma^{d}v\|_{L^2(\mathbb{R}^2)}\nonumber\\
%&+\sum_{|b|+|c|+|d|\leq 6}\|\Gamma^{b}\Theta D\Gamma^{c}u T\Gamma^{d}v\|_{L^2(\mathbb{R}^2)}+ C\sum_{|b|+|c|+|d|\leq 6}\|\Gamma^{b}\Theta T\Gamma^{c}u D\Gamma^{d}v\|_{L^2(\mathbb{R}^2)}\nonumber\\
%&+ C\sum_{|b|+|c|+|d|\leq 6}\|\Gamma^{b}\Theta D\Gamma^{c}\Theta T\Gamma^{d}v\|_{L^2(\mathbb{R}^2)}+ C\sum_{|b|+|c|+|d|\leq 6}\|\Gamma^{b}\Theta T\Gamma^{c}\Theta D\Gamma^{d}v\|_{L^2(\mathbb{R}^2)}.
%\end{align}
\par
Similarly to \eqref{56777yyyy}, the second and third part on the right hand side of \eqref{fgtyyyeee88999} can be estimated by the same way and admit the same upper bound.\par
From the above discussion, we can get
\begin{align}\label{ccfr5666699}
&\|\partial_t \Gamma^{a}uf_a\|_{L^1(\mathbb{R}^2)}+\|\partial_t \Gamma^{a}vg_a\|_{L^1(\mathbb{R}^2)}\nonumber\\
&\leq C\langle t\rangle^{-1} \big(\mathcal {X}^2_{4}(u(t))+\mathcal {X}^2_{4}(v(t))+\mathcal {X}^2_{8}(\Phi(t))\big)\big(\mathcal {E}^{\frac{1}{2}}_{7}(u(t)) +\mathcal{E}^{\frac{1}{2}}_{7}(v(t))\big)\big( {E}^{\frac{1}{2}}_{7}(u(t)) +{E}^{\frac{1}{2}}_{7}(v(t))\big)\nonumber\\
&+ C\langle t\rangle^{-2}\big(\mathcal {X}^2_{4}(u(t))+\mathcal {X}^2_{4}(v(t))+\mathcal {X}^2_{8}(\Phi(t))\big)\big( {E}_{7}(u(t)) +{E}_{7}(v(t))\big).
\end{align}
\par
Combing \eqref{rule5}, \eqref{rtt56666666}, \eqref{y78900}, \eqref{4890uoop} and \eqref{ccfr5666699}, we can get
\begin{align}
&{E}_{7}(u(t)) +{E}_{7}(v(t))+\int_{0}^{t} \mathcal{E}_{7}(u(t)) +\mathcal{E}_{7}(v(t)) dt\nonumber\\
&\leq C\varepsilon^2+C\int_0^{t}\langle t\rangle^{-1} \big(\mathcal {X}^2_{4}(u(t))+\mathcal {X}^2_{4}(v(t))\big)\big(\mathcal {E}^{\frac{1}{2}}_{7}(u(t)) +\mathcal{E}^{\frac{1}{2}}_{7}(v(t))\big)\big( {E}^{\frac{1}{2}}_{7}(u(t)) +{E}^{\frac{1}{2}}_{7}(v(t))\big)dt\nonumber\\
&+ C\int_0^{t}\langle t\rangle^{-2}\big(\mathcal {X}^2_{4}(u(t))+\mathcal {X}^2_{4}(v(t))\big)\big( {E}_{7}(u(t)) +{E}_{7}(v(t))\big)dt\nonumber\\
&+ C\int_0^{t}\langle t\rangle^{-1} \mathcal {X}^2_{8}(\Phi(t))\big(\mathcal {E}^{\frac{1}{2}}_{7}(u(t)) +\mathcal{E}^{\frac{1}{2}}_{7}(v(t))\big)\big( {E}^{\frac{1}{2}}_{7}(u(t)) +{E}^{\frac{1}{2}}_{7}(v(t))\big)dt\nonumber\\
&+ C\int_0^{t}\langle t\rangle^{-2}\mathcal {X}^2_{8}(\Phi(t))\big( {E}_{7}(u(t)) +{E}_{7}(v(t))\big)dt
\nonumber\\
&\leq C\varepsilon^2+16CA_1^2A_2^2\varepsilon^4+ C\int_0^{t}\langle t\rangle^{-2}\big( {E}_{7}(u(t)) +{E}_{7}(v(t))\big)dt\nonumber\\
&+\frac{1}{100}\int_{0}^{t} \mathcal{E}_{7}(u(t)) +\mathcal{E}_{7}(v(t)) dt.
\end{align}
Then we have
\begin{align}
{E}_{7}(u(t)) +{E}_{7}(v(t))\leq C\varepsilon^2+16CA_1^2A_2^2\varepsilon^4+ C\int_0^{t}\langle t\rangle^{-2}\big( {E}_{7}(u(t)) +{E}_{7}(v(t))\big)dt
\end{align}
By Gronwall's inequality, we get
\begin{align}\label{gji899ener}
{E}^{\frac{1}{2}}_{7}(u(t)) +{E}^{\frac{1}{2}}_{7}(v(t))\leq C_0\varepsilon+4C_0A_1A_2\varepsilon^2.
\end{align}
\subsection{$L^{\infty}$ estimates}
By Lemma \ref{Linfty}, we have
\begin{align}
&\mathcal {X}_{4}(u(t))+\mathcal {X}_{4}(v(t))\nonumber\\
&\leq C\varepsilon+C\int_{0}^{t}\|F(u+\Theta, D(u+\Theta), Dv, D^2(u+\Theta), D^2v)\|_{\Gamma, 5,1}dt\nonumber\\
&~~~~~~~~+C\int_{0}^{t}\|G(u+\Theta, D(u+\Theta), Dv, D^2(u+\Theta), D^2v)\|_{\Gamma, 5,1}dt.
\end{align}
In view of \eqref{333333000}--\eqref{PF1dddd1111}, we have
\begin{align}\label{righttyy}
&\|F(u+\Theta, D(u+\Theta), Dv, D^2(u+\Theta), D^2v)\|_{\Gamma, 5,1}\nonumber\\
&~~~~~~~~~~+\|G(u+\Theta, D(u+\Theta), Dv, D^2(u+\Theta), D^2v)\|_{\Gamma, 5,1}\nonumber\\
&\leq \|\sin(2(u+\Theta))Q(v,v)\|_{\Gamma, 5,1}+ \|\sin(2(u+\Theta))Q(u+\Theta,v)\|_{\Gamma, 5,1}\nonumber\\
&+\|\sin^2(u+\Theta) \Box v\|_{\Gamma, 5,1}+\|\sin(2(u+\Theta))Q_{\mu\nu}(u+\Theta,v)Q^{\mu\nu}(u+\Theta,v)\|_{\Gamma, 5,1}\nonumber\\
&+\|\cos^2(u+\Theta) Q_{\mu\nu}
\big(v,Q^{\mu\nu}(u+\Theta,v)\big)\|_{\Gamma, 5,1}\nonumber\\
&+\|\cos^2(u+\Theta) Q_{\mu\nu}
\big(u+\Theta,Q^{\mu\nu}(u+\Theta,v)\big)\|_{\Gamma, 5,1}.
\end{align}
We will focus on the first three terms on the right hand side of \eqref{righttyy}, the remaining terms can be treated similarly. \par
For the first term on the right hand side of \eqref{righttyy},
it follows from Lemma \ref{composite} and Lemma \ref{QL} that
\begin{align}\label{5677yyy7}
&\|\sin(2(u+\Theta))Q(v,v)\|_{\Gamma,5,1}\nonumber\\
&\leq C\sum_{|b|+|\beta|\leq 5}\|\Gamma^{b}\sin(2(u+\Theta))\Gamma^{\beta}Q(v,v)\|_{L^1(\mathbb{R}^2)}\nonumber\\
&\leq C\langle t\rangle^{-1}\sum_{|b|+|c|+|d|\leq 5}\|\Gamma^{b}u D\Gamma^{c}v \Gamma^{d+1}v\|_{L^1(\mathbb{R}^2)}+ C\langle t\rangle^{-1}\sum_{|b|+|c|+|d|\leq 6}\|\Gamma^{b}\Theta D\Gamma^{c}v \Gamma^{d+1}v\|_{L^1(\mathbb{R}^2)}.
\end{align}
For $|b|+|c|+|d|\leq 5$,
if $|b|+|d|\leq 3$, we have
\begin{align}\label{xiaoyyypjk}
&\|\Gamma^{b}u D\Gamma^{c}v \Gamma^{d+1}v\|_{L^1(\mathbb{R}^2)}\nonumber\\
&\leq C\langle t\rangle^{-1} \|D\Gamma^{c}v\|_{L^{2}(\mathbb{R}^2)} \|\langle t-r\rangle^{-\frac{1}{2}}\langle t+r\rangle^{\frac{1}{2}}\langle t-r\rangle^{\frac{1}{2}}\Gamma^{b}u\|_{L^{4}(\mathbb{R}^2)}\nonumber\\
  &~~~\cdot\|\langle t-r\rangle^{-\frac{1}{2}}\langle t+r\rangle^{\frac{1}{2}}\langle t-r\rangle^{\frac{1}{2}}\Gamma^{d+1}v\|_{L^{4}(\mathbb{R}^2)}\nonumber\\
&\leq C\langle t\rangle^{-1} \|\langle t-r\rangle^{-\frac{1}{2}}\|^2_{L^{4}(|x|\leq t+1)}\mathcal {X}_{4}(u(t))\mathcal {X}_{4}(v(t)) {E}^{\frac{1}{2}}_{7}(v(t))\nonumber\\
&\leq C\langle t\rangle^{-\frac{1}{2}} \mathcal {X}_{4}(u(t))\mathcal {X}_{4}(v(t)) {E}^{\frac{1}{2}}_{7}(v(t))
\end{align}
If $|b|+|c|\leq 3$, by Hardy inequality \eqref{Hardy} and \eqref{gooddecay345}, we have
\begin{align}\label{fgrtyyyjjt}
&\|\Gamma^{b}u  D\Gamma^{c}v \Gamma^{d+1}v\|_{L^1(\mathbb{R}^2)}\nonumber\\
&\leq C\langle t\rangle^{-1} \|\langle t-r\rangle^{-1}\Gamma^{d+1}v\|_{L^2(\mathbb{R}^2)}\|\langle t-r\rangle^{-\frac{1}{2}}\langle t+r\rangle^{\frac{1}{2}}\langle t-r\rangle^{\frac{1}{2}}\Gamma^{b}u\|_{L^{4}(\mathbb{R}^2)}\nonumber\\
  &~~~\cdot\|\langle t-r\rangle^{-\frac{1}{2}}\langle t+r\rangle^{\frac{1}{2}}\langle t-r\rangle^{\frac{1}{2}}\Gamma^{c+1}v\|_{L^{4}(\mathbb{R}^2)}\nonumber\\
&\leq C\langle t\rangle^{-1} \|\langle t-r\rangle^{-\frac{1}{2}}\|^2_{L^{4}(|x|\leq t+1)}\mathcal {X}_{4}(u(t))\mathcal {X}_{4}(v(t)) {E}^{\frac{1}{2}}_{7}(v(t))\nonumber\\
&\leq C\langle t\rangle^{-\frac{1}{2}} \mathcal {X}_{4}(u(t))\mathcal {X}_{4}(v(t)) {E}^{\frac{1}{2}}_{7}(v(t)).
\end{align}
Similarly to \eqref{fgrtyyyjjt}, if $|c|+|d|\leq 3$, it holds that
\begin{align}\label{xyaohi89}
\|\Gamma^{b}u D\Gamma^{c}v \Gamma^{d+1}v\|_{L^1(\mathbb{R}^2)}\leq C\langle t\rangle^{-\frac{1}{2}} \mathcal {X}_{4}(v(t))\mathcal {X}_{4}(v(t)) {E}^{\frac{1}{2}}_{7}(u(t)).
\end{align}
Thus we obtain
\begin{align}\label{ui8900}
&\sum_{|b|+|c|+|d|\leq 5}\|\Gamma^{b}u D\Gamma^{c}v \Gamma^{d+1}v\|_{L^1(\mathbb{R}^2)}\nonumber\\
&\leq C\langle t\rangle^{-\frac{1}{2}}\big( \mathcal {X}^2_{4}(u(t))+\mathcal {X}^2_{4}(v(t))\big) \big({E}^{\frac{1}{2}}_{7}(u(t))+{E}^{\frac{1}{2}}_{7}(v(t))\big).
\end{align}
For the second part on the right hand side of \eqref{5677yyy7}, for $|b|+|c|+|d|\leq 5$,
if $|b|+|d|\leq 3$, similarly to \eqref{xiaoyyypjk}, we get
\begin{align}\label{xiaoyyypjrttttk}
\|\Gamma^{b}\Theta D\Gamma^{c}v \Gamma^{d+1}v\|_{L^1(\mathbb{R}^2)}
\leq C\langle t\rangle^{-\frac{1}{2}} \mathcal {X}_{4}(\Phi(t))\mathcal {X}_{4}(v(t)) {E}^{\frac{1}{2}}_{7}(v(t)).
\end{align}
If $|b|+|c|\leq 3$ or $|c|+|d|\leq 3$, similarly to \eqref{fgrtyyyjjt}, we have
\begin{align}\label{fgrtyyyoojjt}
\|\Gamma^{b}\Theta D\Gamma^{c}v \Gamma^{d+1}v\|_{L^1(\mathbb{R}^2)}
\leq C\langle t\rangle^{-\frac{1}{2}} \mathcal {X}_{8}(\Phi(t))\mathcal {X}_{4}(v(t)) {E}^{\frac{1}{2}}_{7}(v(t)).
\end{align}
Thus we obtain
\begin{align}\label{xyu7999}
\sum_{|b|+|c|+|d|\leq 6}\|\Gamma^{b}\Theta D\Gamma^{c}v \Gamma^{d+1}v\|_{L^1(\mathbb{R}^2)}\leq C\langle t\rangle^{-\frac{1}{2}} \big(\mathcal {X}^2_{8}(\Phi(t))+\mathcal {X}^2_{4}(v(t))\big) {E}^{\frac{1}{2}}_{7}(v(t)).
\end{align}
It follows from \eqref{5677yyy7}, \eqref{ui8900} and \eqref{xyu7999} that
\begin{align}\label{hj7899900}
&\|\sin(2(u+\Theta))Q(v,v)\|_{\Gamma,5,1}\nonumber\\
&\leq C\langle t\rangle^{-\frac{3}{2}}\big( \mathcal {X}^2_{4}(u(t))+\mathcal {X}^2_{4}(v(t))+\mathcal {X}^2_{8}(\Phi(t))\big) \big({E}^{\frac{1}{2}}_{7}(u(t))+{E}^{\frac{1}{2}}_{7}(v(t))\big).
\end{align}
\par
Similarly to \eqref{hj7899900}, the second term on the right hand side of \eqref{righttyy} can be estimated by the same by and admits the same upper bound.\par
For the third term on the right hand side of \eqref{righttyy}, by Lemma \ref{composite} and Lemma \ref{uu679yui}, we get
\begin{align}
&\|\sin^2(u+\Theta) \Box v\|_{\Gamma, 5,1}\nonumber\\
&\leq C\sum_{|b|+|\beta|\leq 5}\|\Gamma^{\beta}\sin^2(2(u+\Theta))\Box\Gamma^{b}v\|_{L^1(\mathbb{R}^2)}\nonumber\\
&\leq C\langle t\rangle^{-1}\sum_{|b|+|c|+|d|\leq 6}\|\Gamma^{c}u\Gamma^{d}uD\Gamma^{b}v\|_{L^1(\mathbb{R}^2)}+C\langle t\rangle^{-1}\sum_{|b|+|c|+|d|\leq 6}\|\Gamma^{c}u\Gamma^{d}\Theta D\Gamma^{b}v\|_{L^1(\mathbb{R}^2)}\nonumber\\
&+C\langle t\rangle^{-1}\sum_{|b|+|c|+|d|\leq 6}\|\Gamma^{c}\Theta\Gamma^{d}\Theta D\Gamma^{b}v\|_{L^1(\mathbb{R}^2)}.
\end{align}
Then similarly to \eqref{hj7899900}, we have
\begin{align}\label{hj789fff4r449900}
&\|\sin^2(u+\Theta) \Box v\|_{\Gamma, 5,1}\nonumber\\
&\leq C\langle t\rangle^{-\frac{3}{2}}\big( \mathcal {X}^2_{4}(u(t))+\mathcal {X}^2_{4}(v(t))+\mathcal {X}^2_{8}(\Phi(t))\big) \big({E}^{\frac{1}{2}}_{7}(u(t))+{E}^{\frac{1}{2}}_{7}(v(t))\big).
\end{align}
\par
From the above discussion, we obtain
\begin{align}\label{dfttyyy56}
&\mathcal {X}_{4}(u(t))+\mathcal {X}_{4}(v(t))\nonumber\\
&\leq C\varepsilon+C\int_{0}^{t}\langle t\rangle^{-\frac{3}{2}}\big( \mathcal {X}^2_{4}(u(t))+\mathcal {X}^2_{4}(v(t))+\mathcal {X}^2_{8}(\Phi(t))\big) \big({E}^{\frac{1}{2}}_{7}(u(t))+{E}^{\frac{1}{2}}_{7}(v(t))\big) dt\nonumber\\
&\leq C_1\varepsilon+2C_1A_1\varepsilon+8C_1A_1A_2^2\varepsilon^3.
\end{align}
\subsection{Conclusion of the proof}
Noting \eqref{gji899ener} and \eqref{dfttyyy56}, we get
\begin{align}
\sup_{0\leq t\leq T}\big(E_{7}^{\frac{1}{2}}(u(t))+E_{7}^{\frac{1}{2}}(v(t))\big)\leq C_0\varepsilon+4C_0A_1A_2\varepsilon^2
\end{align}
and
\begin{align}
\sup_{0\leq t\leq T}\big(\mathcal {X}_{4}(u(t))+\mathcal {X}_{4}(v(t))\big)
\leq C_1\varepsilon+2C_1A_1\varepsilon+8C_1A_1A_2^2\varepsilon^3.
\end{align}
Assume that
\begin{align}
E_{7}^{\frac{1}{2}}(u(0))+E_{7}^{\frac{1}{2}}(v(0))\leq \widetilde{C}_0\varepsilon~~\text{and}~~
\mathcal {X}_{4}(u(0))+\mathcal {X}_{4}(v(0))\leq \widetilde{C}_1\varepsilon.
\end{align}
Take $A_1=\max\{4C_0,4 \widetilde{C}_0\}$, $A_2=\max\{8(C_1+2C_1A_1), 4 \widetilde{C}_1\}$ and $\varepsilon_0$ sufficiently small such that
\begin{align}
16C_0A_2\varepsilon_0+32C_1A_1A_2\varepsilon_0^2\leq 1.
\end{align}
Then for any $0<\varepsilon\leq \varepsilon_0$, we have
\begin{align}
\sup_{0\leq t\leq T}\big(E_{7}^{\frac{1}{2}}(u(t))+E_{7}^{\frac{1}{2}}(v(t))\big)\leq A_1\varepsilon~~\text{and}~~\sup_{0\leq t\leq T}\big(\mathcal {X}_{4}(u(t))+\mathcal {X}_{4}(v(t))\big)\leq A_2\varepsilon,
\end{align}
which completes the proof of Theorem \ref{mainthm2}.

\end{document}